\theoremstyle{plain}
\newtheorem{thm}{Theorem}[section]
\newtheorem{lem}[thm]{Lemma}
\newtheorem{cor}[thm]{Corollary}
\newtheorem{rem}[thm]{Remark}
\newtheorem{eg}[thm]{Example}
\newtheorem{prop}[thm]{Proposition}
\theoremstyle{definition}
\newtheorem{defn}[thm]{Definition}
\newcounter{probno}
\newcommand{\proofof}[1]{\medskip\noindent{\bf Proof of #1. }}
\renewcommand{\qed}{\hfill$\Box$ \par \medskip}
\newcommand{\id}{\mathrm{id}}
\newcommand{\norm}[2][{}]{\left\|#2\right\|_{#1}}
\newcommand{\origin}{\mathbf{0}}
\newcommand{\R}{\mathbf{R}}
\newcommand{\C}{\mathbf{C}}
\newcommand{\cp}{\mathbf{P}}
\newcommand{\Z}{\mathbf{Z}}
\newcommand{\N}{\mathbf{N}}
\newcommand{\Q}{\mathbf{Q}}
\DeclareMathOperator{\aut}{Aut}
\DeclareMathOperator{\ord}{ord}
\newcommand{\poles}{\mathcal{P}_{polar}}
\newcommand{\simples}{\mathcal{P}_{simple}}
\newcommand{\action}{f_\sharp}
\newcommand{\primes}{\mathcal{P}}
\DeclareMathOperator{\pic}{Pic}
\renewcommand{\div}{\mathop{\mathrm{Div}}}
\DeclareMathOperator{\supp}{supp}
\DeclareMathOperator{\crit}{crit}
\newcommand{\destab}{\mathcal{D}}
\newcommand{\polepts}{\mathcal{Z}}
\newcommand{\feta}{f_\eta}
\newcommand{\orig}{\mathop{\mathrm{orig}}}
\newcommand{\simplepolepts}{\mathcal{Z}_{simple}}
\DeclareMathOperator{\res}{res}
\newcommand{\val}{\mathcal{V}}
\newcommand{\symp}{Symp}
\newcommand{\divplus}{{\div(D^+)}}
\DeclareMathOperator{\fix}{Fix}
\markboth{\today}{\today}
\def\fan{\Delta}
\newcommand{\Id}{\mathrm{Id}}
\theoremstyle{plain}
\newtheorem*{thma}{Theorem A}
\newcommand{\zeradicate}{{A}}
\newtheorem*{propb}{Proposition B}
\newcommand{\tricotomy}{{B}}
\newcommand{\sbe}{{C}}
\newtheorem*{thmc}{Theorem C}
\newcommand{\corrig}{{D}}
\newtheorem*{thmd}{Theorem D}
\newcommand{\toricact}{{E}}
\newtheorem*{thme}{Theorem E}
\newcommand{\toriccor}{{F}}
\newtheorem*{thmf}{Theorem F}
\newcommand{\kod}{\mathop{\mathrm{kod}}}
\title{Rational surface maps with invariant meromorphic two-forms}
\date{\today}
\author{Jeffrey Diller}
\address{Department of Mathematics\\
         University of Notre Dame\\
         Notre Dame, IN 46556}
\email{diller.1@nd.edu}
\author{Jan-Li Lin}
\address{Department of Mathematics\\
         University of Notre Dame\\
         Notre Dame, IN 46556}
\email{jlin4@nd.edu}
\thanks{This work was supported in part by National Science Foundation grant DMS--1066978.}
\subjclass{37F99, 14E07, 14H52, 14J50}
\keywords{rational surface, rational map, meromorphic two-form, circle homeomorphism}
\begin{document}

\begin{abstract}
Let $f:S\to S$ be a rational self-map of a smooth complex projective surface $S$ and $\eta$ be a meromorphic two-form on $S$ satisfying $f^*\eta = \delta\eta$ for some $\delta\in\C^*$.  We show that under a mild topological assumption on $f$, there is a birational change of domain $\psi: X\to S$ such that $\psi^*\eta$ has no zeros.  In this context, we investigate the notion of algebraic stability for $f$, proving that $f$ can be made algebraically stable if and only if it acts nicely on the poles of $\eta$.  We illustrate this last result in the case $\eta = \frac{dx\wedge dy}{xy}$, where we translate our stability result into a condition on whether a circle homeomorphism associated to $f$ has rational rotation number.
\end{abstract}

\maketitle
\setcounter{tocdepth}{1}
\tableofcontents
\markboth{\today}{\today}

\section{Introduction}
\label{introduction}

Much recent research concerns the dynamics of meromorphic maps on compact complex surfaces.  Many of the examples that have guided this work are distinguished by, among other things, the fact that they come equipped with invariant meromorphic two-forms.  Perhaps most notable among these are plane polynomial automorphisms, which preserve the Euclidean form $dx\wedge dy$, and monomial maps which preserve the form $\frac{dx\wedge dy}{xy}$.  Other significant examples, though by no means all of them, occur in \cite{BeDi1, BeKi, Bu1, Ca3, DJS, Mc}.

In this paper, we undertake a more systematic study of surface maps with invariant two-forms.  Specifically, we let $S$ be a smooth complex projective surface, $\eta$ a meromorphic two-form on $S$, and $f:S\to S$ a rational map.    We emphasize that there is a finite `indeterminacy' set $I(f)$ on which $f$ cannot be continuously defined.  The pullback $f^*\eta$ is nevertheless a meromorphic two-form, defined pointwise on $S\setminus I(f)$, and by e.g. Hartog's extension across $I(f)$.  We say that $f$ \emph{preserves} $\eta$ if $f^*\eta = \delta\eta$ for some $\delta = \delta(f,\eta)\in\C^*$.  The condition $\delta\neq 0$ implies that $f$ is dominant, i.e. the image $\overline{f(S\setminus I(f))}$ is all of $S$ rather than some proper subvariety.   

Observe that preservation of a two-form persists under birational conjugacy.  That is, if $f$ preserves $\eta$ and $\psi:\tilde S \to S$ is birational, then $\psi^{-1}\circ f\circ \psi:\tilde S\to \tilde S$ preserves $\psi^*\eta$.   So we can use birational change of coordinate to try to put $\eta$ into a simple normal form.   Examples, including monomial maps and plane polynomial automorphisms, suggest that we aim to make $\psi^*\eta$ zero-free, i.e. $\div\psi^*\eta \leq 0$.

On the other hand, $\eta$ might not be unique.  That is, $f$ might preserve a pair of linearly independent two-forms $\eta_1,\eta_2$, in which case $f$ preserves the fibration of $S$ defined by the non-trivial rational function $R:=\eta_1/\eta_2$.  Specifically, $R\circ f = cR$ for some $c\in\C^*$, so for any $n\in\Z$, the form $\eta = R^n\,\eta_2$ will be preserved by $f$, and when $n>1$ the zeros of $\eta$ will include those of $R$.  We will show in Proposition \ref{badeg} that such zeros cannot always be eliminated by birational change of coordinate.  These circumstances are, however, atypical.  For instance, since $f$ factors through the linear map $z\to cz$ on $\cp^1$, the topological degree $\lambda_2$ of $f$ is the same as the so-called `first dynamical degree' $\lambda_1$ (defined more precisely in \S \ref{background}), which measures the asymptotic rate at which iterates of $f$ expand the area of a typical curve in $S$.  The inequality $\lambda_1\neq\lambda_2$ is therefore a reasonable way to exclude the problem of non-uniqueness.

\begin{thma}
Suppose that $S$ is a smooth complex projective surface and $f:S\to S$ is a rational map preserving a meromorphic two-form $\eta$ on $S$.   If $\lambda_1\neq \lambda_2$, then there is a birational map $\psi:\tilde S \to S$ such that $\psi^*\eta$ has no zeros.   
\end{thma}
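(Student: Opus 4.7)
The plan is to argue by contradiction: if no birational model of $S$ makes $\psi^*\eta$ zero-free, then one can construct a second $f$-invariant meromorphic two-form linearly independent of $\eta$, which by the discussion preceding the theorem forces $\lambda_1=\lambda_2$ and contradicts the hypothesis. Thus on some model, $\psi^*\eta$ must be zero-free.

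I would begin by reducing to a well-behaved birational model. Using a standard blow-up procedure (replacing $S$ by an iterated blow-up on which $f$ is algebraically stable) does not affect either the hypothesis or the conclusion, since preservation of $\eta$ persists under birational conjugacy. I then write $\div(\eta)=Z-P$ with $Z,P$ disjoint effective divisors, and among all such models I choose one that minimizes an appropriate complexity measure of $Z$, for instance the sum of the multiplicities of its components (after first checking this quantity is bounded below under birational changes of $S$ preserving the stability of $f$). The goal is then to prove that $Z=0$ in this minimal model.

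The core step is to extract strong structural constraints on $Z$ from the invariance of $\eta$. Assume $Z\neq 0$, and fix an irreducible component $C$ with $\ord_C(\eta)=m\geq 1$. Resolve the indeterminacy of $f$ by a birational morphism $\pi:X\to S$ so that $g:=f\circ\pi:X\to S$ is a morphism. The identity $g^*\eta=\delta\,\pi^*\eta$, together with the standard ramification formulas for the pullback of a two-form, shows that for any irreducible $D\subset X$ with $g(D)=C$ the order $\ord_D(\pi^*\eta)$ is rigidly determined by $m$, the local degree of $g$ along $D$, and the ramification of $\pi$ along $D$. In particular each non-$\pi$-exceptional such $D$ descends to a component of $Z$ on $S$. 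Iterating this observation, the backward $f$-orbit of $C$ is forced to cycle through the finitely many components of $Z$, producing a nontrivial $f$-invariant system of curves contained in $Z$.

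The main obstacle will be converting this cyclic backward-orbit structure into an actual second invariant two-form. My plan is to construct a rational function $R$ on $S$, built from the curves in the cycle and the rigid multiplicity data above, such that $R\circ f=cR$ for some $c\in\C^*$; then $R\cdot\eta$ is a second $f$-invariant two-form linearly independent of $\eta$, and the argument of the introduction supplies $\lambda_1=\lambda_2$, the desired contradiction. The delicate point is that the backward orbit being cyclic only gives an invariant \emph{set} of curves, whereas an $f$-semi-invariant $R$ requires the curves to appear with rigidly compatible multiplicities along the cycle. It is here that the ramification-theoretic constraints from the previous step must be leveraged: the multiplicity data derived from $g^*\eta=\delta\,\pi^*\eta$ should determine, up to a common scalar, the exponents with which the cycle curves appear in the divisor of $R$, ensuring that $R$ can be constructed and is genuinely semi-invariant. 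Once this is done, the contradiction follows and the minimality of $Z$ forces $Z=0$, completing the proof.
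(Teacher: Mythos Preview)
Your overall strategy---assume the zeros cannot be eliminated and derive $\lambda_1=\lambda_2$---matches the paper's, but the mechanism you propose for reaching that conclusion has a genuine gap, and the paper proceeds differently.

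First a minor point: passing to an algebraically stable model is not always possible for non-invertible $f$ (Favre's monomial examples, discussed in the introduction). The paper's argument does not use stability, so this step should simply be dropped.

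The serious gap is your construction of the semi-invariant rational function $R$. Your ramification analysis does show that any non-exceptional curve mapping onto a component of $Z$ is again a component of $Z$, so backward orbits stay in the finite set of components of $Z$. But an $f$-invariant cycle of curves, even equipped with multiplicities rigidly determined by the ramification identity, need not support a \emph{principal} divisor; and $R$ exists only if the divisor you build is linearly equivalent to zero. Nothing in the invariance of $\eta$ forces this. Indeed the paper's Corollary~\ref{dichotomy} constructs an effective divisor $D_*$ supported on $D^+$ with $f^*D_*\sim D_*$, but $D_*$ is nef and nontrivial, hence typically not principal. The paper remarks after the corollary that it is not even known whether some multiple of $D_*$ is a fiber of an $f$-invariant fibration---which is essentially what your $R$ would give.

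The paper sidesteps this by working with classes and intersection theory rather than functions. After a preparatory modification making $D^+$ meet $D^-$ only at simple points (Lemma~\ref{intersection}), the ramification identity (Proposition~\ref{jformula}) is used, much as you suggest, to show that $f^*$ restricted to $\div(D^+)$ has spectral radius $\leq 1$ (Lemma~\ref{nonexpansion}). Then Proposition~\ref{perp} and the Hodge Index Theorem give a dichotomy: either the intersection form is negative definite on $\div(D^+)$, in which case Castelnuovo/Grauert allows one to contract $D^+$ away; or there is a nef effective $D_*\in\div(D^+)$ with $D_*^2=0$ and $f^*D_*\sim D_*$. In the latter case the pushpull formula (Theorem~\ref{pushpull}) yields $(f^n)_*D_*=\lambda_2^n D_*$ for all $n$, from which $\lambda_1=\lambda_2$ follows directly. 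No second invariant two-form is ever produced.
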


As will become apparent in \S \ref{egs}, our main results Theorems {\zeradicate}, {\sbe } and ${\corrig }$ are significant mostly when the Kodaira dimension of $S$ is $-\infty$.  That is, $S$ is a complex projective surface birationally equivalent to $\cp^1\times B$ for some Riemann surface $B$.  The case when $S$ is rational (i.e. $B = \cp^1$) is particularly interesting.  Here we have three mutually exclusive possibilities for the two-form $\psi^*\eta$ in the conclusion of Theorem {\zeradicate }.

\begin{propb}
If $\eta$ is a zero-free meromorphic two-form on a rational surface $S$, then there is a birational map $\psi:\cp^2 \to S$ such that one of the following holds.
\begin{itemize}
\item (smooth case) The divisor $-\div\psi^*\eta$ is a smooth cubic curve.
\item (toric case) $\psi^*\eta = \frac{dx\wedge dy}{xy}$.
\item (euclidean case) $\psi^*\eta = dx\wedge dy$.
\end{itemize}
\end{propb}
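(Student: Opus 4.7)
The plan is to first reduce to the case $S = \cp^2$, then classify the resulting polar divisor using Cremona transformations centered at its singular points.

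For the reduction, pick any birational map $\psi_0 : \cp^2 \to S$ and resolve its indeterminacy on a common smooth blow-up $X$, with morphisms $\pi_1 : X \to \cp^2$ and $\pi_2 : X \to S$. The divisor of $\pi_2^*\eta$ on $X$ equals $-\pi_2^*D + F_2$, where $D = -\div\eta \geq 0$ and $F_2 = K_X - \pi_2^*K_S$ is the effective relative canonical divisor. A component of $F_2$ that is not also $\pi_1$-exceptional would produce a zero of $\psi_0^*\eta$ on $\cp^2$; to eliminate such components, I would replace $\psi_0$ by a different factorization of the birational equivalence, using the fact that any two birational models of a rational surface are connected by sequences of blow-ups, blow-downs, and elementary transformations of Hirzebruch surfaces. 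The outcome is a birational map $\psi : \cp^2 \to S$ with $\psi^*\eta$ zero-free on $\cp^2$.

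Now $D := -\div\psi^*\eta$ is effective and linearly equivalent to $-K_{\cp^2} = 3H$, hence a cubic divisor. Up to $\mathrm{PGL}_3$-equivalence the possibilities are the smooth cubic, the nodal cubic, the cuspidal cubic, a conic plus secant line, a conic plus tangent line, a triangle of lines, three concurrent lines, a simple line plus a double line, and a triple line. The smooth cubic yields the smooth case directly. I would then show that the ``SNC-cycle'' configurations---nodal cubic (1-cycle), conic plus secant (2-cycle), and triangle (3-cycle)---all lie in a single Cremona equivalence class under transformations with base points constrained to $\supp D$, and reduce to the toric normal form $dx\wedge dy/(xy)$. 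The remaining ``degenerate'' configurations---cuspidal cubic, conic plus tangent, three concurrent lines, double-plus-simple line, and triple line---reduce analogously to the euclidean normal form $dx\wedge dy$.

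The main obstacle is the Cremona classification in the last step. For the toric reductions, the key principle is that each SNC-cycle $D$ with $K_{\cp^2}+D=0$ can be lengthened or shortened by a standard ``log Cremona'' move: blow up an intersection point of $D$ (which lies in $\supp D$, so zero-freeness is preserved), then contract a neighboring component that has become a $(-1)$-curve. Iterating these moves connects the 1-, 2-, and 3-cycle types and in particular yields the triangle. For the euclidean reductions, one resolves the non-nodal singularity by iterated blow-ups at points of $D$, producing a chain of rational curves; contracting the $(-1)$-curves of the chain from the outside returns us to $\cp^2$, now with $D$ replaced by the triple line (possibly after passing through intermediate configurations which reduce further). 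At every step one must verify that the base points of each Cremona lie in $\supp D$, so that zero-freeness is preserved; this is possible precisely because the singular points being resolved are themselves in $\supp D$.
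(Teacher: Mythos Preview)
Your reduction to $\cp^2$ has a real gap. You start from an arbitrary birational $\psi_0:\cp^2\to S$, observe that $\psi_0^*\eta$ may acquire zeros, and then say only that you ``would replace $\psi_0$ by a different factorization\dots using the fact that any two birational models\dots are connected by sequences of blow-ups, blow-downs, and elementary transformations.'' That is not an argument: you never explain which moves to perform, in what order, or why zero-freeness is preserved at each step. Merely knowing that $\cp^2$ and $S$ are connected by such moves does not tell you how to arrange that the intermediate forms stay zero-free.

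The paper's route is much cleaner and avoids this difficulty entirely. Rather than beginning on $\cp^2$ and trying to repair zeros, it begins on $S$ and contracts down: since $\eta$ is zero-free, contracting any $(-1)$-curve keeps $\eta$ zero-free (check this from $K_S = \pi^*K_{S'}+E$), so one may pass immediately to a minimal rational surface. From a Hirzebruch surface with negative section $E$, one performs elementary transformations \emph{centered at points of $\supp\div\eta$ off $E$}; because $\ord(p,\eta)<0$ at such a point, the new exceptional curve is again a pole, and zero-freeness persists. This raises $E^2$ to $0$, and one final elementary move (blow up a polar point, contract both rulings) lands on $\cp^2$. The point is that every center is chosen inside the polar locus, which is exactly the principle you invoke later for the Cremona classification but fail to implement in the reduction.

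Your second half---grouping the singular cubics into ``cycle'' types (nodal, conic+secant, triangle) versus ``degenerate'' types (cuspidal, conic+tangent, concurrent lines, double+simple, triple) and moving within each family by Cremona transformations based at points of $D$---is the same approach as the paper's, and your description of the log-Cremona moves is a bit more explicit than what the paper writes out.
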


\noindent A similar result (Lemma \ref{dicotomy}) with two instead of three possible cases holds when $S$ is irrational, but in that situation we can (Theorem \ref{irrationalegs}) characterize the maps as well as the forms.

In the rest of the introduction we will consider how the existence of a zero-free invariant two-form $\eta$ might assist in understanding dynamics of $f$.  Specifically, we consider `algebraic stability' for a rational map $f:S\to S$ preserving $\eta$.  Any rational self-map induces a natural pullback $f^*:\div(S)\to \div(S)$ on divisors, and this descends to a finite dimensional operator $f^*:\pic(S)\to\pic(S)$ on linear equivalence classes.  The map $f$ is \emph{algebraically stable} if pullback iterates well, i.e. $(f^n)^* = (f^*)^n$ for all $n\in\N$.   Algebraic stability implies that the first dynamical degree of $f$ is the largest eigenvalue of $f^*$.  This has been a necessary starting point for analytic constructions (see e.g. \cite{Sib, Gue}) used to study the ergodic theory of $f$.

Forn{\ae}ss and Sibony \cite{FoSi} (see Proposition \ref{geometric criterion} in \cite{FoSi}) observed that algebraic stability fails precisely when some irreducible curve $C\subset S$ is contracted by $f$ and then mapped by a further iterate $f^n$ to a point in the indeterminacy set of $f$.  We call such curves $C$ \emph{destabilizing} for $f$.  Diller and Favre \cite{DiFa} showed that destabilizing curves can sometimes be eliminated by blowing up their forward images: when $f:S\to S$ is birational (with or without an invariant two-form), there always exists a modification (i.e. a composition of point blowups) $\pi:X\to S$ that lifts $f$ to an algebraically stable map $f_X:X\to X$.
On the other hand, Favre \cite{Fa2} observed that this result does not extend to non-invertible rational maps.  He showed that it fails in particular for certain monomial maps, given in affine coordinates by $(x,y)\mapsto (x^ay^b,x^cy^d)$ for $a,b,c,d\in\Z$.  Monomial maps preserve the two-form $\frac{dx\wedge dy}{xy}$, so preserving a two-form does not automatically guarantee that one can arrange for algebraic stability.   Nevertheless, the invariant two-form allows one to somewhat restrict the search for a stable model $X\to S$.  The key fact here is that while $f$ can contract curves which are not poles of $\eta$, the image of a contracted curve is always contained in the divisor of $\eta$.  From this we first establish

\begin{thmc}
Suppose that $f:S\to S$ is a rational map preserving a meromorphic two-form $\eta$ such that $-\div\eta$ is effective.
If there exists a modification $\pi:X\to S$ lifting $f$ to an algebraically stable map, then one can choose $\pi$ so that $-\div\pi^*\eta$ is effective.
\end{thmc}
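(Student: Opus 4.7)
The plan is to start from the given modification $\pi: X \to S$ lifting $f$ to the algebraically stable $f_X: X \to X$ and to iteratively contract those $\pi$-exceptional curves of $X$ along which $\pi^*\eta$ acquires a zero. Before starting the iteration I would record two preliminary facts. First, a direct local computation on a single blowup $\sigma: \hat Y \to Y$ at a point $p$ shows that $\sigma^*\omega$ acquires a zero along the exceptional divisor exactly when $\omega$ is holomorphic and nonvanishing at $p$; if instead $p$ lies on the polar divisor of $\omega$ with the components through $p$ contributing total pole order $k \geq 1$, then the exceptional carries a pole of order $k-1$ and no new zero is created. In particular every zero of $\pi^*\eta$ is $\pi$-exceptional, and each zero-bearing component arises from a blowup centered off the then-current polar divisor. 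Second, the local Jacobian identity combined with $f_X^*(\pi^*\eta) = \delta\,\pi^*\eta$ forces any curve $C$ contracted by $f_X$ either to be itself a zero component of $\pi^*\eta$ or to map into the polar divisor of $\pi^*\eta$.

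With these in hand, I would pick an irreducible $\pi$-exceptional component $E$ along which $\pi^*\eta$ vanishes. Using the first preliminary, blowing up any point of a zero component again produces a zero component, so the zero components of $\pi^*\eta$ organize into subtrees of the $\pi$-exceptional forest; within any such subtree the latest-blown-up component is a $(-1)$-curve, so I may reduce to the case that $E$ itself is a $(-1)$-curve. Castelnuovo contraction of $E$ gives $\rho: X \to \bar X$ and an induced modification $\bar\pi: \bar X \to S$ for which $\bar\pi^*\eta$ has one fewer zero component. To see that $\bar f := \rho \circ f_X \circ \rho^{-1}$ remains algebraically stable I would invoke the Forn{\ae}ss--Sibony criterion and argue that $\rho$ cannot introduce a destabilizing orbit. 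The key point is that generic points of $E$ lie off the polar divisor of $\pi^*\eta$, so by the second preliminary no curve contracted by $f_X$ has its image at a generic point of $E$; the entire dynamical interaction of $E$ with contracted curves is confined to the finite intersection of $E$ with the polar divisor, which $\rho$ collapses to the single point $\rho(E)$ in a way that preserves orbit structure. Iterating finitely many such contractions removes every zero, yielding the desired $\pi': X' \to S$.

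The step I expect to fight hardest is verifying that each Castelnuovo contraction actually preserves algebraic stability. Even granting that generic points of $E$ are dynamically inert, I must still rule out that some curve $C \subset X$ with $f_X(C) \subset E$ becomes a curve contracted by $\bar f$ to $\rho(E)$ whose forward orbit lands on a new indeterminacy point, and similarly that no indeterminacy of $f_X$ located on $E$ produces new indeterminacy of $\bar f$ at $\rho(E)$ that launches a destabilizing orbit. I anticipate handling both by lifting any purported $\bar f$-orbit back through $\rho$ to an $f_X$-orbit, which is already tame by the algebraic stability of $f_X$ on $X$; because $\rho$ collapses only a single smooth rational curve whose generic dynamics lives in the zero locus rather than in the polar locus, the lift should be essentially unique and stability preserved. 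If this uniqueness fails in a subtle case, my fallback is to contract the whole zero subtree above $E$ simultaneously in an order dictated by the subtree structure from the first preliminary, so that each intermediate contraction is genuinely a Castelnuovo step on a $(-1)$-curve and the same orbit-lifting argument applies.
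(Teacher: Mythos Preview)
Your overall strategy---contract the zero components of $\pi^*\eta$ to reach a modification on which the pulled-back form is zero-free---is exactly what the paper does. The difference is that you contract one $(-1)$-curve at a time and try to maintain algebraic stability at every intermediate stage, whereas the paper contracts \emph{all} zeros in a single step.

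There is a real gap in your inductive step. Your second preliminary says that a curve contracted by $f_X$ is either itself a zero or maps into the polar locus; it does \emph{not} say that no contracted curve lands on $E$. In fact, since zeros and poles of $\pi^*\eta$ are disjoint (a consequence of your first preliminary, and stated as part of Corollary~\ref{decomps} in the paper), the set ``$E\cap(\text{polar divisor})$'' to which you confine the interaction is empty. What your argument leaves unaddressed is the possibility that some \emph{other zero curve} $C'$ satisfies $f_X(C')=E$. If in addition $f_X$ does not contract $E$ (so $f_X(E)$ is a curve), then after the Castelnuovo contraction the curve $C'$ persists in $\bar X$, is contracted by $\bar f$ to the point $\rho(E)$, and $\rho(E)$ is indeterminate for $\bar f$. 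That is a destabilizing orbit of length one on $\bar X$ that did not exist on $X$. Proposition~\ref{jformula} permits this scenario: if $\ord(E,\pi^*\eta)\ge 1$ then $\ord(C',\pi^*\eta)+1=m(f,C')\bigl(\ord(E,\pi^*\eta)+1\bigr)\ge 2$, so $C'$ is just a zero of order at least that of $E$. Your fallback of contracting an entire zero subtree at once does not obviously help either, since $f_X$ can send zeros in one connected component of the zero locus to zeros in another.

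The paper sidesteps all of this by factoring $\pi$ as an elaboration $\pi_{\mathrm{elab}}:\check Y\to S$ followed by an anti-elaboration $\pi_{\mathrm{anti}}:X\to\check Y$ that contracts exactly the zero locus, and then proving directly that $f_{\check Y}$ is algebraically stable. The gain is that on $\check Y$ the form $\eta_{\check Y}$ is zero-free, so the clean statement ``every curve contracted by $f_{\check Y}$ has image in $\supp\div\eta_{\check Y}$'' (Proposition~\ref{miscmapprop}) applies without exception. A putative destabilizing orbit on $\check Y$ then lifts to $X$ and, by algebraic stability there, ends at a point lying on a curve contracted by $\pi_{\mathrm{anti}}$; but the center of an anti-elaboration lies \emph{off} $\supp\div\eta_{\check Y}$, giving an immediate contradiction. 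Your orbit-lifting instinct is correct---it just needs to be applied to the full anti-elaboration rather than to a single blow-down, so that the surface you lift from is one where the ``contracted curves land on poles'' principle holds unconditionally.
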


In light of Theorem~{\sbe}, it makes sense to introduce the following terminology.

\begin{defn} Let $\eta$ be a meromorphic two-form on $S$ with $\div\eta<0$.  A modification $\pi_X:X\to S$ is an \emph{elaboration} of $\eta$ if $\div\pi_X^*\eta<0$, too.  Let $f:S\to S$ be a rational map that preserves $\eta$.  We say that $f$ is \emph{corrigible} if for any elaboration $\pi_X:X\to S$, there is a further elaboration $\pi:Y \to X$ lifting $f$ to an algebraically stable map.  We say that $f$ is \emph{corrigible along $\eta$} if for any elaboration $\pi_X:X\to S$ there exists a further elaboration $\pi: Y\to X$ such that no pole of $(\pi_X\circ\pi)^*\eta$ is destabilizing for the lift of $f$ to $Y$.
\end{defn}

Theorem {\sbe} and the Forn{\ae}ss-Sibony criterion for algebraic stability imply that if $f$ is corrigible, then it is corrigible along $\eta$.  Our next result is that the converse holds.

\begin{thmd}
Let $f:\cp^2\to\cp^2$ be a rational map preserving a zero-free meromorphic two-form $\eta$.  Then $f$ is corrigible if and only if $f$ is corrigible along $\eta$.
\end{thmd}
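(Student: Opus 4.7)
The forward implication follows immediately from Theorem~{\sbe} and the Forn\ae{}ss--Sibony criterion: if $f$ is corrigible, then any elaboration $\pi_X:X\to\cp^2$ admits a further modification to an algebraically stable model, and by Theorem~{\sbe} we may choose this model so that $\eta$ pulled back remains zero-free, making it an elaboration of $\eta$. On an algebraically stable model no curve whatsoever is destabilizing, so in particular no pole of the pullback form is, which is exactly corrigibility along $\eta$.

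For the converse, suppose $f$ is corrigible along $\eta$ and fix an elaboration $\pi_X:X\to\cp^2$. Apply corrigibility along $\eta$ to obtain a further elaboration $\pi_1:Y_1\to X$ such that no pole of $\eta_1:=(\pi_X\circ\pi_1)^*\eta$ is destabilizing for the lift $f_1:Y_1\to Y_1$. We aim to elaborate $Y_1$ still further, within the category of elaborations of $\eta$, until the lift becomes algebraically stable. By the Forn\ae{}ss--Sibony criterion combined with the hypothesis on poles, every destabilizing curve $C$ of $f_1$ must be a non-pole curve. Applying the contraction principle from earlier in the paper, namely that any curve contracted by a map preserving $\eta$ has image in $\div\eta$ (which equals the pole divisor since $\eta$ is zero-free), the image $p_C:=f_1(C)$ lies on the pole divisor of $\eta_1$, and destabilization means some forward iterate $f_1^k(p_C)$ lies in $I(f_1)$.

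The strategy is, for each such destabilizing $C$, to blow up $Y_1$ along the finite orbit $p_C,f_1(p_C),\dots,f_1^k(p_C)$, together with such further blow-ups as are needed to resolve indeterminacy at the endpoint. Since the pole divisor is forward invariant under $f_1$---each pole is either contracted to a point of the pole divisor or mapped onto another pole component---this orbit lies entirely on the pole divisor, so the resulting modification $\rho:Y_2\to Y_1$ is again an elaboration of $\eta$. By construction $C$ is no longer destabilizing on $Y_2$. However, $Y_2$ may harbor new destabilizing curves: possibly the newly introduced exceptional divisors (now components of the pole divisor of $\rho^*\eta_1$), or pre-existing non-pole curves whose forward orbits now meet altered indeterminacy. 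One re-applies corrigibility along $\eta$ to obtain an elaboration $Y_3$ in which no pole is again destabilizing, and then repeats the orbit blow-up procedure on the remaining destabilizing non-pole curves.

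The main obstacle is to show that this alternation terminates after finitely many rounds. Termination should come from a descent argument on a well-chosen non-negative invariant attached to $(Y,f_Y,\eta_Y)$: plausible candidates include a weighted count of destabilizing curves, the total length of bad forward orbits across the pole divisor, or a refinement of the effective divisor $\div\mathrm{Jac}(f_Y)=f_Y^*\div\eta_Y-\div\eta_Y$ that is forced by the relation $f^*\eta=\delta\eta$. Because the pole divisor is a fixed finite union of curves on which $f_Y$ acts with strictly bounded combinatorics, and because each round of blow-ups resolves at least one destabilizing curve outright while corrigibility along $\eta$ replenishes none of the combinatorial budget so consumed, one expects such an invariant to decrease strictly with each round, yielding an algebraically stable lift in finitely many steps and hence corrigibility.
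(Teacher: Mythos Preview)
Your forward implication is correct and matches the paper's argument exactly.

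For the converse, however, there is a genuine gap: you do not actually establish termination. You describe an alternating procedure (blow up destabilizing orbits, re-apply corrigibility along $\eta$, repeat) and then assert that ``one expects'' some invariant to decrease, listing several ``plausible candidates'' without verifying that any of them works. This is precisely the hard part of the theorem, and the paper devotes four lemmas to it.

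The difficulty your sketch glosses over is twofold. First, blowing up along a destabilizing orbit can create new exceptional curves that are themselves destabilizing, and re-applying corrigibility along $\eta$ may require further strong elaboration that adds yet more poles; your claim that ``corrigibility along $\eta$ replenishes none of the combinatorial budget'' is unjustified and, without further structure, need not be true. Second, you do not distinguish simple from multiple points of $\div\eta$, and you do not control whether the exceptional primes you create are ``tame'' (i.e. whether $\action$ sends them to exceptional primes originating at the expected point). Both distinctions are essential.

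The paper's approach is to first do all necessary strong elaboration at once, arriving at a surface $X$ that is \emph{prepared}: all stray exceptional primes already appear, every non-polar curve contracted by $f_X$ lands at a \emph{simple} point, and no pole destabilizes. From this point on, only \emph{peripheral} elaborations are used, so the pole divisor never changes and corrigibility along $\eta$ need not be re-invoked. The paper then introduces a finite set $\destab_X\subset\simplepolepts$ of simple $\eta$-points (the ``destabilizing locus'') that contains every destabilizing orbit, and proves that blowing up an incarnation of any $p\in\destab_X$ yields a surface $Y$ that is still prepared with $\destab_Y\subset\destab_X$. Termination then comes from two separate arguments: non-periodic points of $\destab_X$ are eliminated by repeatedly blowing up over them (Corollary~\ref{straycor1} bounds the depth needed), and periodic cycles are eliminated by a depth-comparison argument along the cycle. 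Your proposal contains none of this structure, and without it the iteration you describe could in principle continue indefinitely.
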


The gain here is that for each irreducible component $C$ of $\div\eta$ and any $n\in\N$, the strict transform $f^n(C)$ is contained in another irreducible component of $\div\eta$.  Hence it is far easier to check whether poles of $\eta$ destabilize $f$ than it is to check whether other curves do so.  We stress, however, that Theorem {\corrig} does \emph{not} say that only poles of $\eta$ can destabilize $f$.  The proof makes clear that one needs to elaborate much more to eliminate all destabilizing curves than one does to eliminate only destabilizing poles.

Corrigibility along $\eta$ turns out to be automatic in the smooth case of Proposition \tricotomy.  The toric case $\eta = \frac{dx\wedge dy}{xy}$ is more interesting, and we consider it at length.  Our approach, in the spirit of \cite{Ca1, BFJ, FaWu} is to consider at once all poles of all lifts of $\eta$ in all possible elaborations.  Modulo an equivalence identifying the `same' pole in different elaborations, this is a countable set $\poles$ naturally identified (via the theory of toric surfaces) with the set of rational rays in $\R^2$.   We show that

\begin{thme}
Suppose that $f:\cp^2\to\cp^2$ preserves $\frac{dx\wedge dy}{xy}$.  Then $f$ induces a map $\action:\poles\to\poles$ which corresponds to a piecewise linear map $T_f:\R^2\to\R^2$ such that $T_f(\Z^2)\subset \Z^2$ and $T_f:\R^2\setminus\{\origin\}\to\R^2\setminus\{\origin\}$ is a finite covering.
\end{thme}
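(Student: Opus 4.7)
My plan is to realize $\action$ as an explicit tropicalization of $f$. I first identify $\poles$ with primitive lattice vectors in $\Z^2 \setminus \{\origin\}$ (equivalently, rational rays in $\R^2$) via the toric dictionary: each pole of $\frac{dx \wedge dy}{xy}$ in any toric elaboration of $\cp^2$ is a toric boundary divisor, corresponding to a primitive ray generator of the fan, and the equivalence relation defining $\poles$ identifies the same primitive vector across different elaborations. Next I write $f = (f_1, f_2)$ in affine coordinates with $f_1, f_2 \in \C(x, y)^*$, and for a nonzero Laurent polynomial $g = \sum c_{m,n} x^m y^n$ I set $\tau_g(a, b) := \min\{am + bn : c_{m,n} \neq 0\}$, extending to rational functions by $\tau_{g/h} = \tau_g - \tau_h$. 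Define $T_f(a, b) := (\tau_{f_1}(a, b), \tau_{f_2}(a, b))$. By construction $T_f$ is continuous, $1$-homogeneous, and piecewise linear with integer matrix on each maximal cone of linearity, so $T_f(\Z^2) \subset \Z^2$.

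The second step is to verify that $T_f$ realizes $\action$. For a pole $D$ represented by primitive $v = (a, b)$, I complete to a $\Z$-basis $\{v, w\}$ with $w = (c, d)$ and work in the affine toric chart whose monomial coordinates $(t, u)$ satisfy $x = t^a u^c$, $y = t^b u^d$, and $D = \{t = 0\}$. Substituting these monomials into $f_i$ yields $\ord_D f_i = \min\{am + bn : c^{(i)}_{m,n} \neq 0\} = \tau_{f_i}(a, b)$. In any toric elaboration of the target in which the strict transform $f(D)$ is a boundary divisor with primitive vector $v'$, these orders pin down the ray of $v'$ as the ray through $T_f(v)$, so $\action(D)$ on rays coincides with the ray induced by $T_f(v)$.

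Finally, for the covering property on $\R^2 \setminus \{\origin\}$, surjectivity follows from $f^*\eta = \delta\eta$ with $\delta \neq 0$: every pole of $\eta$ in any target elaboration (i.e., every rational ray) must be hit by some source pole in order to produce the matching pole order of $f^*\eta$, and density of rational rays together with continuity and $1$-homogeneity of $T_f$ extends this to surjectivity onto $\R^2 \setminus \{\origin\}$. Finiteness of fibers should follow from the Jacobian identity $\det Df = \delta f_1 f_2 / (xy)$ obtained by expanding $f^*\eta = \delta\eta$: its tropicalization forces the matrix $M_\sigma$ of each linear piece to be nondegenerate, so preimages within each cone are unique. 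The main obstacle I anticipate is upgrading these facts to the genuine covering property, namely showing $T_f$ is a local homeomorphism across the $1$-skeleton of its cone decomposition and ruling out folding of adjacent pieces; this requires using the preservation identity combinatorially to match orientations of neighboring linear pieces and is the most delicate step of the argument.
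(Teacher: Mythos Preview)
Your construction of $T_f$ as the tropicalization of $f$ and your verification that it realizes $\action$ on rational rays are essentially the same as the paper's argument; the paper phrases the order-of-vanishing computation via one-parameter subgroups $\gamma^u(z)=(z^a,z^b)$ and the landing map on toric boundary divisors, but this is the identical calculation in slightly different language.

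The substantive difference is in the covering property, where you correctly flag a gap in your own approach. The paper does not try to deduce the local homeomorphism across the $1$-skeleton from the tropical Jacobian; instead it proves directly, by a geometric local degree count, that $\action$ has \emph{constant} fiber cardinality $\lambda_2/|\delta|$ on all of $\poles$. The mechanism is the Poincar\'e residue formula: for each simple pole $C$ one has $f_\eta^*\res_{\action C}\eta = \frac{\delta}{m(f,C)}\res_C\eta$, so the restriction $f_\eta:C\cap\simplepolepts\to(\action C)\cap\simplepolepts$ is a $|\delta|/m(f,C)$-fold cover of $\C^*$ by $\C^*$. Combined with the transverse local multiplicity $m(f,C)$, this makes $f$ exactly $|\delta|$-to-$1$ from a tubular neighborhood of any incarnation $C_X$ onto a neighborhood of $(\action C)_Y$. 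Since the global topological degree is $\lambda_2$, every $C'\in\poles$ has exactly $\lambda_2/|\delta|$ preimages under $\action$. A continuous piecewise linear self-map of $S^1$ with constant fiber size on a dense set is automatically a covering, so no combinatorial analysis of adjacent linear pieces is needed. Your tropical-Jacobian idea for nondegeneracy of each $M_\sigma$ is plausible but, as you note, does not by itself rule out folding; the paper's degree argument is shorter and sidesteps that issue entirely, while also identifying the covering degree as $\lambda_2/|\delta|$.
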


For $f$ birational, $T_f$ is a `piecewise linear automorphism of $\Z^2$', i.e. a homeomorphism satisfying $T_f(\Z^2)=\Z^2$.  In fact, $T_f$ is a homeomorphism even in all non-invertible examples that we know.  Hence in these cases, we obtain a circle homeomorphism $T_f:S_1\to S_1$ by letting $T_f$ act on one dimensional rays.  The key invariant for dynamics of the induced circle homeomorphism is its \emph{rotation number}, which we use with Theorem {\corrig} to determine corrigibility.

\begin{thmf}
Suppose in Theorem {\toricact} that $T_f$ is a homeomorphism.
\begin{itemize}
\item If $T_f$ is orientation reversing, then $f^2$ is corrigible.
\item If $T_f$ is orientation preserving with rotation number $m/n\in\Q$, then $f^n$ is corrigible.
\item If the rotation number of $T_f$ is irrational, then no iterate of $f$ is birationally conjugate to any algebraically stable map.
\end{itemize}
\end{thmf}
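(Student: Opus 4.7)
My plan is to combine Theorems \corrig{} and \toricact{} with Poincar\'e's theory of circle homeomorphisms. Applied to the iterate $f^k$ (which also preserves $\eta$), Theorem~\corrig{} reduces corrigibility of $f^k$ to corrigibility along $\eta$, and by Theorem~\toricact{} the action on $\poles$ is encoded by the piecewise-linear homeomorphism $T_f^k$ of $\R^2$ acting on rational rays. Whether $T_f^k$ admits periodic orbits on $S^1$ is controlled by its rotation number, providing the three cases.

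For the orientation reversing case, $T_f^2$ is orientation preserving with rotation number $0$. By classical theory, every $T_f^2$-orbit on $S^1$ is monotone and converges to a fixed point of $T_f^2$. In particular, the $T_f$-orbits (hence $T_f^2$-orbits) of the three standard rays $e_1, e_2, -e_1-e_2$, even when infinite, have only finitely many rays before they settle into the monotone convergent tail; truncating at an appropriate finite stage and including the limit fixed ray produces a finite rational ray set that is ``nearly'' $T_f^2$-invariant in a controlled sense. I would then elaborate the corresponding toric surface $X_\Delta$ with additional non-toric blowups at the torus fixed points arising as images of any remaining contracted poles, resolving the resulting indeterminacies; by monotone convergence, only finitely many such blowups are needed. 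The outcome is an elaboration on which no pole of $(\pi_X\circ\pi)^*\eta$ destabilizes $f^2$, so Theorem~\corrig{} gives that $f^2$ is corrigible. The orientation preserving rational rotation $m/n$ case is handled analogously using that $T_f^n$ has rotation number $0$ and that Poincar\'e's theorem produces $T_f$-periodic orbits of period dividing $n$.

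For the irrational case I argue by contradiction. If some $f^k$ were birationally conjugate to an algebraically stable map, then applying Theorem~\corrig{} to $f^k$ with initial elaboration $X=\cp^2$ would yield a further elaboration $Y$ on which no pole of $\pi^*\eta$ destabilizes $f^k$. The toric part of $Y$ is described by a finite fan $\Delta$. But since $T_f^k$ has irrational rotation number $k\alpha \pmod 1$, its orbits on $S^1$ are minimal --- dense and with no periodic point --- so for any finite $\Delta$ there exist rays $r\in\Delta$ whose $T_f^k$-forward orbits wander in and out of $\Delta$ infinitely often. Tracking the corresponding contracted poles, the forward $f^k$-orbits of their image torus fixed points on $X_\Delta$ hit infinitely many distinct problematic points, and no finite collection of non-toric blowups can resolve them simultaneously; hence some pole must destabilize, contradicting the hypothesis.

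The main obstacle is this last step: making precise that ``infinitely many orbit-points needing resolution'' precludes any finite elaboration. The careful mechanism should track, for each 2-cone $\sigma$ of $\Delta$, the $T_f^k$-orbit of $\sigma$ under the piecewise-linear action, and correlate the times when $T_f^k(\sigma)$ fails to be contained in a single 2-cone of $\Delta$ with the indeterminacy-hits on $X_\Delta$ whose resolution requires additional blowups. For the rational and reversing cases, monotone convergence of rotation-$0$ orbits ensures only finitely many such failures, yielding a finite elaboration that works; for irrational rotation the density of orbits forces infinitely many failures, forbidding any finite resolution.
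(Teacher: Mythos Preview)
Your overall strategy---reduce via Theorem~{\corrig} to corrigibility along $\eta$ and then analyze the circle map $T_f$ on rays via rotation-number theory---matches the paper's.  However, there are genuine gaps in both the rational and irrational arguments.

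\textbf{Rational case.}  You write that you will truncate orbits and ``include the limit fixed ray'' to get a finite \emph{rational} ray set.  But an isolated attracting (or repelling) fixed ray of a piecewise linear $T_f$ need not be rational: it is an eigendirection of one linear piece, and the eigenvalue can be an irrational quadratic integer.  Only semi-stable isolated fixed rays and the endpoints of fixed arcs are forced to be rational (because they are rays where linearity breaks or where the Jordan block is nontrivial).  The paper's remedy is \emph{not} to add the limit ray; instead one arranges the fan so that each two-dimensional cone containing an attracting fixed ray is $T_f$-contracted ($T_f(\sigma)\subset\sigma$), and then, for each non-fixed ray $\tau$ in the fan, one adds the finitely many forward images $T_f^j(\tau)$ that lie \emph{outside} any contracted cone.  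Once the orbit enters a contracted $\sigma$, it stays there forever, so the corresponding pole is contracted to the double point of $\sigma$ and never reaches indeterminacy that involves poles.  Your proposal to perform ``non-toric blowups at the torus fixed points'' is also off target: corrigibility \emph{along} $\eta$ concerns only poles, and poles are created precisely by \emph{strong} (i.e.\ toric) elaborations; Theorem~{\corrig} already disposes of all non-polar destabilizing curves, so no peripheral blowups are needed at this stage.

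\textbf{Irrational case.}  You invoke minimality (density) of $T_f^k$-orbits on $S^1$, but for a general circle homeomorphism with irrational rotation number this can fail (Denjoy counterexamples).  You need that $T_f$, being piecewise linear, has no wandering intervals---i.e.\ a Denjoy-type theorem---before you can assert density.  The paper cites exactly this.  Also, your contradiction is set up only against \emph{elaborations} of $\cp^2$, whereas the statement rules out \emph{any} birational model.  The paper's argument is both simpler and stronger: on an arbitrary rational surface $Y$ birational to $\cp^2$, the form $\eta_Y$ has at least one pole $C_Y$ incarnating some $C\in\poles$; since the $\action$-orbit of $C$ is infinite and only finitely many poles appear in $Y$, some $f_Y^k$ contracts $C_Y$ to a point $p_Y$; then Denjoy density gives $\ell>k$ with $C_Y\subset f_Y^\ell(p_Y)$, so $p_Y\in I(f_Y)$ and stability fails.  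This avoids entirely the ``infinitely many resolutions needed'' bookkeeping you flag as the main obstacle.
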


This result is well-known for monomial maps \cite{Fa2, JW} but new for non-monomial maps preserving $\eta$.  In particular, it implies that many non-monomial maps are not corrigible.  Restricting to birational maps, it has another interesting consequence.  Usnich \cite{Us} (see also Blanc's work \cite{Bl1} in this direction) showed that any piecewise linear automorphism $T$ of $\Z^2$ is equal to $T_f$ for some birational map $f$ preserving $\frac{dx\wedge dy}{xy}$.  Hence the result from \cite{DiFa} that birational maps are corrigible implies, via Theorems {\sbe} and {\toricact}, that any circle map induced by a piecewise linear automorphism of $\Z^2$ must have rational rotation number (see Corollary \ref{plaz2}).  This was originally proven in a rather indirect fashion by Ghys and Sergiescu \cite{GhSe} with later and more direct proofs given by Liousse \cite{Li} and Calegari \cite{Cal}.

We have little to say here about corrigibility along $\eta$ in the Euclidean case $\eta = dx\wedge dy$.  One can again consider the action of $f$ on the set $\poles$ of all poles of all elaborations of $\eta$, but this time $\poles$ is uncountable, and the situation is similar to one considered by Favre and Jonsson in their work \cite{FaJo1} on dynamical degrees for polynomial maps of $\C^2$.  Their approach is to regard (an analogue of) $\poles$ as a dense subset in a certain space of valuations, and then to exploit the tree structure of the valuation space in order to understand algebraic stability for polynomial maps $f:\C^2\to\C^2$.  One might hope to mimic their approach to say more about corrigibility of rational maps $f$ preserving $dx\wedge dy$.  It is worth pointing out, however, that if the Jacobian conjecture fails on $\C^2$ then any results about maps preserving $dx\wedge dy$ must apply to the counterexamples.



Theorems {\zeradicate}, {\sbe}, and {\corrig } and their proofs remain true on any compact K\"ahler surface $S$.  However, As we will explain at the end of \S \ref{egs}, they are fairly uninteresting for non-projective surfaces.  A referee for this paper pointed out that the same results extend to projective surfaces over any algebraically closed field of characteristic zero.  

The authors would like to thank Serge Cantat for pointing out some good examples of non-invertible maps that preserve smooth and Euclidean two-forms, and William Gignac for patiently explaining ideas from \cite{FaJo1}.  
Finally, they are grateful to the aforementioned referee for his many constructive suggestions for improving the paper.

\section{Rational maps and meromorphic two-forms}
\label{background}

From now on, unless otherwise noted, the word `surface' will mean `smooth complex projective surface'.  We begin with some background concerning rational maps on surfaces. The articles \cite{DiFa, DDG1} have longer discussions (and more proofs) in the same spirit.

Let $X,Y$ be smooth complex projective surfaces and $f:X\to Y$ be a dominant rational map.  That is, there exists a surface $\Gamma$, a modification (i.e. composition of point blowups) $\pi_1:\Gamma\to X$ and a surjective holomorphic map $\pi_2:\Gamma\to Y$ such that $f= \pi_2\circ \pi_1^{-1}$.  The composition is well-defined, except at a finite set $I(f)\subset X$ of points that are images of complex curves contracted by $\pi_1$.  We call $\Gamma$ the \emph{graph} of $f$, though technically it is a desingularization of the graph and as such not unique.  We call $I(f)$ the indeterminacy set of $f$.

Throughout this paper, the word `curve' will always be used in the set theoretic sense.  That is, it will always refer to a (reduced and possibly reducible) compact one dimensional complex subvariety of $X$.
We adopt the convention for $p\in I(f)$ that $f(p)$ is the connected complex curve $\pi_2\circ\pi_1^{-1}(p)$.  On the other hand, for curves $C\subset X$ we adopt the (somewhat opposite) convention that $f(C) = \overline{f(C\setminus I(f))}$ is the set-theoretic strict transform of $C$.  Hence $f(C)$ is a point if and only if $f$ contracts $C$.   When $f$ does not contract $C$, we write $m(f,C)$ to denote the local multiplicity of $f$ about $C$, and $\deg f|_C$ to denote the topological degree of the restriction $f:C\to f(C)$.

Let $\div(X)$ denote the set of divisors on $X$.  For any $D\in\div(X)$ and any irreducible curve $C\subset X$, we write $\ord(C,D) $ to denote the order of $C$ in $D$.   Similarly we write $\ord(p,D) := \sum_C \mu(p,C)\ord(C,D)$, where the sum is over all irreducible curves $C\subset X$ and $\mu(p,C)$ is the multiplicity of $C$ at $p$, i.e. the order of vanishing at $p$ of a local defining function for $C$.

There are natural linear maps $f^*:\div(Y)\to \div(X)$, $f_*:\div(X)\to\div(Y)$.  If $C\subset X$ and $C'\subset Y$ are irreducible curves such that $f(C) = C'$, then we have in particular that $f_*C = (\deg f|_C) C'$ and $\ord(C,f^*C') = m(f,C)$.   The critical divisor $\crit(f)\in\div(C)$ is defined by
\begin{equation}
\label{hurwitz}
\crit(f) = \div f^*\eta - f^*\div\eta
\end{equation}
where $\eta$ is some/any meromorphic two-form on $X$ and $\div\eta$ is the associated divisor.  The irreducible components of $\crit(f)$ come in two types: curves that are contracted by $f$, and `branch' curves $C$ for which $m(f,C) >1$.  In the latter case, we have $\ord(C,\crit(f)) = m(f,C)-1$, a fact which may be restated as follows.

\begin{prop}
\label{jformula}
Suppose that $C\subset X$ is not contracted by $f$ and that $\eta$ is any meromorphic two-form on $Y$.  Then
$$
\ord(C,f^*\eta) + 1 = m(f,C) (\ord(f(C),\eta) + 1).
$$
\end{prop}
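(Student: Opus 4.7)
The plan is to reduce the assertion to a local computation in coordinates at a generic point of $C$. Since $f$ does not contract $C$, the image $C' := f(C)$ is an irreducible curve, and for a generic point $p\in C$, one has $p\notin I(f)$, both $C$ at $p$ and $C'$ at $q := f(p)$ are smooth, no other component of $\div\eta$ passes through $q$, and the restriction $f|_C$ is locally a covering onto $C'$ with nonzero derivative at $p$. I would set $k := \ord(C',\eta)$ and $m := m(f,C)$, and prove that the leading order of $f^*\eta$ along $C$ equals $m(k+1)-1$.

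Next I would choose local coordinates $(v,w)$ near $q$ with $C' = \{v=0\}$ and local coordinates $(s,t)$ near $p$ with $C = \{s=0\}$. Then $\eta$ has the local form $\eta = v^k \phi(v,w)\,dv\wedge dw$ for some nonvanishing holomorphic $\phi$, while the local multiplicity of $f$ along $C$ gives $f^*v = s^m u(s,t)$ with $u$ a nonvanishing unit at $p$, and $f^*w = g(s,t)$ holomorphic. The genericity of $p$ forces $\partial g/\partial t(p)\neq 0$, since this is exactly the derivative of $f|_C$ at $p$ in the chosen coordinates.

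Then I would compute
\[
f^*\eta \;=\; (s^m u)^k\,\phi(s^m u,g)\,d(s^m u)\wedge dg,
\]
and expand the wedge product. The factor $d(s^m u) = s^{m-1}(mu\,ds + s\,du)$ contributes an unavoidable $s^{m-1}$, so
\[
d(s^m u)\wedge dg \;=\; s^{m-1}\bigl[\,m u\,\partial g/\partial t + O(s)\,\bigr]\,ds\wedge dt.
\]
Combining with the $(s^m u)^k$ factor gives $\ord(C,f^*\eta) = mk + (m-1) = m(k+1)-1$, which rearranges to the desired formula.

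The only place any care is needed is in verifying that the bracketed leading coefficient does not vanish at the generic point, i.e.\ that both $u(p)$ and $\partial g/\partial t(p)$ are nonzero; the first is the definition of $m(f,C)$ and the second is the observation that $f|_C$ has nonzero derivative at a generic point of $C$. Alternatively, one can bypass the direct coordinate calculation by invoking the already-introduced identity $\crit(f) = \div f^*\eta - f^*\div\eta$ from \eqref{hurwitz}: the order of $C$ in $f^*\div\eta$ is exactly $m\cdot k$ (since no other component of $\div\eta$ contains $C'$), and combining with the fact that the branch order of $C$ in $\crit(f)$ is $m-1$ gives the formula at once. I would present the local computation, since it also doubles as a proof of the branch-order statement.
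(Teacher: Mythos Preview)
Your proof is correct. The paper does not give a separate proof of this proposition; rather, it introduces it as a restatement of the preceding sentence, namely that $\ord(C,\crit(f)) = m(f,C)-1$ for a non-contracted curve $C$, combined with the defining identity \eqref{hurwitz}. This is exactly the ``alternative'' route you sketch at the end. Your main argument via local coordinates is more explicit: it simultaneously establishes the branch-order statement $\ord(C,\crit(f)) = m-1$ and the proposition itself, whereas the paper simply asserts the branch-order fact without computation. So the two approaches coincide in spirit, but yours is self-contained while the paper's is a one-line deduction from a stated (unproved) fact.
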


\noindent Note that here and elsewhere $\ord(C,\eta)$ is shorthand for $\ord(C,\div \eta)$.  The quantity $\ord(C,\eta) + 1$ is often called the \emph{log-discrepancy} of $C$ relative to $\eta$.  The monograph \cite{FaJo3} uses the term \emph{thinness} for a closely related quantity associated to a valuation.

Pushforward and pullback of divisors are compatible with linear equivalence and so descend to corresponding maps between Picard groups $\pic(X)$ and $\pic(Y)$.  The latter are in turn compatible (via assignment of Chern classes) with maps $f_*,f^*$ between $H^{1,1}(X)$ and $H^{1,1}(Y)$.

Pushforward and pullback are adjoint with respect to the intersection product, i.e. $f^*\alpha\cdot \beta=\alpha\cdot f_*\beta$, but they are not in general inverses.  More precisely, one has the following `pushpull' formula.

\begin{thm}[\cite{DiFa}, Theorem 3.3]
\label{pushpull}
For any divisor $D\in\div(X)$,
$$
f_*f^* D = \lambda_2(D) + E^-(D),
$$
where $E^-:\div(X)\to\div(X)$ is a linear operator such that
\begin{itemize}
 \item $E^-(D)$ depends only on the Chern class of $D$,
 \item $\supp E^-(D) \subset f(I(f))$,
 \item $D\cdot E^-(D)\geq 0$, and
 \item $E^-(D)=0$ is equivalent to $D\cdot E^-(D)=0$ and to $D\cdot f(p)=0$ for all $p\in I(f)$.
\end{itemize}
\end{thm}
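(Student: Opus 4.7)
The plan is to lift the computation to a resolution of $f$ and reduce everything to standard facts about modifications. Let $\Gamma$ be a graph of $f$, with modification $\pi_1:\Gamma\to X$ and surjective holomorphic $\pi_2:\Gamma\to Y$ satisfying $f = \pi_2\circ\pi_1^{-1}$. By definition of pullback and pushforward for dominant rational maps, $f^* = (\pi_1)_*\,\pi_2^*$ and $f_* = (\pi_2)_*\,\pi_1^*$ on divisors, so
\[
f_*f^* D \;=\; (\pi_2)_*\,\pi_1^*(\pi_1)_*\,\pi_2^* D.
\]

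For the inner composition I would use the classical fact that for any modification $\pi_1$ there is a linear correction operator $F$ with $\pi_1^*(\pi_1)_* D' = D' + F(D')$ for every divisor $D'$ on $\Gamma$, where $F(D')$ is $\pi_1$-exceptional and depends only on the Chern class of $D'$. Applying this with $D' = \pi_2^* D$ and using that $\pi_2$ is generically $\lambda_2$-to-one (so $(\pi_2)_*\pi_2^*$ acts as multiplication by $\lambda_2$ on divisors), the main identity
\[
f_*f^* D \;=\; \lambda_2 D + E^-(D),\qquad E^-(D) \;:=\; (\pi_2)_*\,F(\pi_2^* D)
\]
falls out immediately.

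Most of the listed properties of $E^-$ then follow mechanically. Linearity and dependence only on $[D]$ pass down from $F$. The support claim holds because $F(\pi_2^* D)$ lives on $\pi_1$-exceptional curves, and each such curve is carried by $\pi_2$ onto an irreducible component of $f(p)$ for some $p\in I(f)$. For $D\cdot E^-(D)\ge 0$, I would apply the projection formula to write $D\cdot E^-(D) = \pi_2^* D\cdot F(\pi_2^* D)$, expand $F(\pi_2^* D) = \sum a_i E_i$ in the basis of irreducible $\pi_1$-exceptional components, and combine Zariski's theorem that the intersection form on this basis is negative definite with explicit evaluation of the numbers $\pi_2^* D\cdot E_i$.

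The main obstacle is the chain of equivalences in the last bullet. The implication $D\cdot E^-(D) = 0 \Leftrightarrow E^-(D) = 0$ is Hodge-index bookkeeping, once one notes that $E^-(D)$ determines the relevant components of $F(\pi_2^* D)$ up to $\pi_2$-vertical corrections. Translating this into the purely geometric condition $D\cdot f(p) = 0$ for every $p\in I(f)$ requires matching, component by component, the $\pi_2$-images of the $E_i$ with the irreducible pieces of the curves $f(p)$ and reading the vanishing $\pi_2^* D\cdot E_i = 0$ through that dictionary. Keeping careful track of intersection multiplicities in the presence of infinitely-near blowups and of coincidences among the various $f(p)$ is where the real technical work lies.
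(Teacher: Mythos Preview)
The paper does not prove this theorem at all: it is quoted verbatim as Theorem~3.3 of \cite{DiFa} and used as a black box (notably in the proof of Corollary~\ref{dichotomy}). So there is no ``paper's own proof'' to compare against.

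That said, your sketch is essentially the argument given in \cite{DiFa}. One lifts to the graph, uses $f^* = (\pi_1)_*\pi_2^*$ and $f_* = (\pi_2)_*\pi_1^*$, invokes the correction formula $\pi_1^*(\pi_1)_* = \id + F$ for a modification with $F$ exceptional, and then pushes down by $\pi_2$ using $(\pi_2)_*\pi_2^* = \lambda_2\cdot\id$. Your identification of the last bullet as the technically delicate part is accurate: in \cite{DiFa} this is handled by working with an explicit orthogonal basis for the $\pi_1$-exceptional lattice (one divisor per blowup in a factorization of $\pi_1$), so that the coefficients of $F(\pi_2^*D)$ are literally the intersection numbers $D\cdot f(p)$, and the negative definiteness makes the equivalences immediate. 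Your outline is correct but would benefit from making that basis explicit rather than leaving it as ``bookkeeping.''
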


It is important to note that pushforward and pullback do not necessarily behave well under composition of rational maps and in particular under iteration of self-maps.

\begin{defn}
A rational self-map $f:S\to S$ is \emph{algebraically stable} if $(f^*)^n = (f^n)^*$ for all $n\in\N$.
\end{defn}

\noindent It does not matter in the definition whether one considers pullback of divisors, linear equivalence classes, or cohomology classes.  It is equivalent to require that $(f_*)^n = (f^n)_*$ for all $n\in\N$.  More geometrically, one has

\begin{prop}[see \cite{FoSi} page 139 and \cite{DiFa} Theorem 1.14]
\label{geometric criterion}
$f:S\to S$ is algebraically stable if and only if there is no irreducible curve $C$ contracted by $f$ such that $f^n(C)\in I(f)$ for some $n>0$.
\end{prop}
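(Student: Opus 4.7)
The plan is to reduce the proposition to the standard composition law for pullback of divisors and then induct on $n$. Specifically, the key technical ingredient is: for rational surface maps $f: X \to Y$ and $g: Y \to Z$, one has $(g \circ f)^* = f^* \circ g^*$ on divisors if and only if $f$ contracts no irreducible curve $C$ with $f(C) \in I(g)$. This is essentially Theorem 1.14 of \cite{DiFa}, so I would either cite it or reprove it briefly by working on a common desingularized graph $\Gamma$ dominating all three surfaces: computing both sides on $\Gamma$ shows that their discrepancy is supported on strict transforms of curves $C \subset X$ that are contracted by $f$ and whose image $f(C)$ lies in the indeterminacy locus of $g$. The two sides agree precisely when no such $C$ exists.

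To apply this, observe that $f$ is algebraically stable if and only if $(f^{m+1})^* = f^* \circ (f^m)^*$ for every $m \geq 1$: a straightforward induction on $m$ gives $(f^m)^* = (f^*)^m$ from this identity, and the converse is immediate. Applying the composition law with the inner map $f$ and outer map $f^m$, the identity holds if and only if $f$ contracts no irreducible curve $C$ with $f(C) \in I(f^m)$. Since a point $p$ lies in $I(f^m)$ precisely when iterating $f$ starting from $p$ reaches $I(f)$ within $m-1$ steps (with the convention that iteration stops upon the first hit of $I(f)$), demanding this for every $m \geq 1$ is the condition that no $f$-contracted curve $C$ has its forward orbit $f(C), f^2(C), \ldots$ ever meeting $I(f)$. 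This is exactly the Forn{\ae}ss--Sibony condition.

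The main obstacle is the composition law itself, which requires careful bookkeeping on a common resolution; once that lemma is in hand, the rest is a clean induction together with a slight unpacking of the definition of $I(f^m)$. A minor secondary matter is interpreting `$f^n(C) \in I(f)$' in the proposition, since $C$ is contracted by $f$ and so $f^n(C)$ must denote the forward orbit of the point $f(C)$ under iteration of $f$; the paper's conventions in the preceding section already make this unambiguous.
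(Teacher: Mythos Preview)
The paper does not actually give a proof of this proposition: it is stated as a background result with references to \cite{FoSi} and \cite{DiFa}, and no argument appears in the text. Your proposal is essentially the standard proof, and in particular it is the approach taken in \cite{DiFa} (the composition-law lemma you cite as the ``key technical ingredient'' is their Proposition~1.13, and the induction you describe is their Theorem~1.14). So your write-up is correct and matches the cited source rather than diverging from it; there is simply nothing in the present paper to compare against.
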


\noindent If such a curve $C$ exists, and $n>0$ is the smallest integer such that $f^n(C)\in I(f)$, then we will call both $C$ and the orbit segment $f(C),\dots, f^n(C)$ \emph{destabilizing} for $f$.

We let $\lambda_2$ denote the topological degree of $f:S\to S$.  This is equal to the number of preimages of a generic point or, alternatively, the multiplier for the one dimensional operator $f^*:H^{2,2}(S)\to H^{2,2}(S)$ induced by $f$ on the top cohomology of $S$.  One can associate a similar quantity $\lambda_1$ to pullback of divisors.

\begin{defn}
The \emph{first dynamical degree} of a rational map $f:S\to S$ is the quantity
$
\lambda_1 = \lambda_1(f) := \lim_{n\to\infty} \norm{(f^n)^*}^{1/n}
$,
where $f^*$ is pullback on $H^{1,1}(S)$.
\end{defn}

As the definition suggests, the limit on the right side always exists and is independent of choice of the norm $\norm{\cdot}$.  The topological and first dynamical degrees always satisfy the basic inequality $\lambda_1^2 \geq \lambda_2$.  When $S$ is projective, one has
$$
\lambda_1 = \sup_{D,D'\in\div(S)} \lim_{n\to\infty} (f^{n*}D\cdot D')^{1/n},
$$
where `$\cdot$' denotes intersection product.  The supremum is achieved whenever $D^2,(D')^2 > 0$ and in particular when $D,D'$ are ample.

If $f$ is algebraically stable, $\lambda_1$ is just the magnitude of the largest eigenvalue $r_1$ of $f^*:H^{1,1}(S)\to H^{1,1}(S)$.  In general, $\lambda_1 \leq r_1$ and there are nef classes (i.e. limits of K\"ahler classes) $\theta^*,\theta_*\in H^{1,1}(S)$ such that $f^*\theta^* = r_1\theta^*$ and $f_*\theta_* = r_1\theta_*$.  These are unique up to positive multiple if $r_1^2 > \lambda_2$, but in any case, the fact that $f^*$ and $f_*$ are adjoint relative to intersection product has an elementary consequence that will be useful to us.

\begin{prop}
\label{perp}
For any $\theta\in H^{1,1}(S)$, $\lim_{n\to\infty} r_1^{-n} (f^*)^n\theta = 0$ implies that $\theta\cdot\theta_* = 0$.
\end{prop}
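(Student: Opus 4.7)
The plan is to exploit the adjointness identity $f^*\alpha \cdot \beta = \alpha \cdot f_*\beta$ quoted just above the proposition together with the eigenvector equation $f_*\theta_* = r_1\theta_*$. First I would note that adjointness iterates: by induction on $n$ from the single-step identity,
\[
(f^*)^n\alpha \cdot \beta \;=\; \alpha \cdot (f_*)^n\beta
\]
for every $\alpha,\beta \in H^{1,1}(S)$ and every $n \in \N$, with no need for $f$ to be algebraically stable (we iterate the bilinear identity, not the operators). Likewise, $(f_*)^n\theta_* = r_1^n\theta_*$ by induction from $f_*\theta_* = r_1\theta_*$.

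Next, pair the hypothesized sequence with $\theta_*$. Using the two identities above,
\[
r_1^{-n}\,(f^*)^n\theta \cdot \theta_* \;=\; r_1^{-n}\,\theta \cdot (f_*)^n\theta_* \;=\; r_1^{-n}\,\theta \cdot r_1^n \theta_* \;=\; \theta \cdot \theta_*,
\]
so the left hand side is \emph{constant} in $n$. On the other hand, the hypothesis $r_1^{-n}(f^*)^n\theta \to 0$ in $H^{1,1}(S)$, combined with the bilinearity (hence continuity) of the intersection pairing, forces the limit of the left hand side to equal $0 \cdot \theta_* = 0$. Hence $\theta \cdot \theta_* = 0$.

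I do not expect any serious obstacle here: the statement is essentially a one-line consequence of adjointness plus the eigenvector equation for $\theta_*$. The only subtlety worth flagging is that $(f^*)^n$ need not agree with $(f^n)^*$ when $f$ is not algebraically stable, but the argument above never invokes $(f^n)^*$, only the iterated pairing between $(f^*)^n$ and $(f_*)^n$, so the distinction does not interfere.
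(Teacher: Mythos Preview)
Your proof is correct and is precisely the ``elementary consequence'' of adjointness that the paper alludes to in the sentence preceding the proposition; the paper does not spell out a proof, but your argument --- pairing $r_1^{-n}(f^*)^n\theta$ with $\theta_*$, using iterated adjointness and the eigenvector equation to see the pairing is constant, then invoking the hypothesis --- is exactly what is intended. Your remark that one never needs $(f^*)^n=(f^n)^*$ is apt and worth keeping.
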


This fact is especially useful to us when combined with the Hodge Index Theorem, which we use in the following slightly non-standard form (see \cite{DJS}).

\begin{thm}
\label{hodge}
Let $\theta\in H_\R^{1,1}(S)$ be a non-trivial nef class, and $D\in\div(S)$ an effective divisor.  Set $\div(D)$ equal to the set of divisors $D'$ supported on the irreducible components of $D$.  Then
$\theta\cdot D = 0$ implies that $(D')^2\leq 0$ for all $D'\in\div(D)$.  Moreover, either
\begin{itemize}
\item $(D')^2 < 0$ for all $D'\neq 0$; or
\item there exists an effective divisor $D'\in\div(D)$ whose Chern class is a positive multiple of $\theta$.
\end{itemize}
In the second case, any divisor $D''\in\div(D)$ satisfying $(D'')^2 = 0$ has the property that $k'D'$ is linearly equivalent to $k''D''$ for some non-zero integers $k',k''$.
\end{thm}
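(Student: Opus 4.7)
My plan is to prove Theorem \ref{hodge} by leveraging the classical Hodge Index Theorem (the intersection form on $H_\R^{1,1}(S)$ has signature $(1, h^{1,1}(S) - 1)$) in a non-standard way, with the nef class $\theta$ playing the role usually reserved for an ample or positive-square class.

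First I would note that $\theta \cdot C \geq 0$ for every irreducible component $C$ of $D$, since $\theta$ is nef and $C$ is an effective irreducible curve. Combined with $\theta \cdot D = 0$ and the effectivity of $D$, this forces $\theta \cdot C = 0$ for every such component, and hence $\theta \cdot D' = 0$ for every $D' \in \div(D)$. Letting $V \subset H_\R^{1,1}(S)$ denote the real span of the classes $[C_1], \ldots, [C_n]$, I would next show the intersection form is negative semi-definite on $V$: if some $\alpha \in V$ had $\alpha^2 > 0$, then classical Hodge Index applied with $\alpha$ in place of an ample class would force $\theta^2 < 0$ on the nonzero orthogonal class $\theta$, contradicting nef-ness. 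This gives $(D')^2 \leq 0$ for all $D' \in \div(D)$.

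For the dichotomy, consider the intersection matrix $M = (C_i \cdot C_j)$, which I have just shown is negative semi-definite. If $M$ is negative definite, then every nonzero $D' \in \div(D)$ has $(D')^2 < 0$, giving the first bullet. Otherwise the kernel $K$ of $M$ is nontrivial, and I claim it contains a nonzero effective element. Indeed, for any $v \in K$, write $v = v^+ - v^-$ with $v^+, v^- \geq 0$ of disjoint support; using $C_i \cdot C_j \geq 0$ for distinct irreducible curves, the identity
\begin{equation*}
0 = v^T M v = (v^+)^T M v^+ - 2 (v^+)^T M v^- + (v^-)^T M v^-
\end{equation*}
has a nonnegative middle term and nonpositive outer terms, so each outer term must vanish. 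Negative semi-definiteness of $M$ then forces $v^+, v^- \in K$, and a nonzero one of these yields an effective nonzero $D' \in \div(D)$ with $(D')^2 = 0$.

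To identify $[D']$ as a positive multiple of $\theta$, I would look at the subspace $W = \R[D'] + \R\theta \subset H_\R^{1,1}(S)$: all intersection pairings within $W$ vanish (since $[D']^2 = \theta^2 = \theta \cdot [D'] = 0$), so $W$ is totally isotropic. The signature $(1, h^{1,1}(S) - 1)$ bounds the dimension of any totally isotropic subspace by one, so $[D']$ and $\theta$ are proportional, say $[D'] = c\theta$. Pairing with any ample class $H$ yields $c > 0$, since $[D'] \cdot H > 0$ (as $D'$ is nonzero and effective) and $\theta \cdot H > 0$ (by Hodge Index applied to $\theta$ nef and nonzero: orthogonality to $H$ would force $\theta = 0$). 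The same argument applied to any $D'' \in \div(D)$ with $(D'')^2 = 0$ produces $[D''] = c'' \theta$; integrality of the N\'eron--Severi lattice then forces $c''/c' \in \Q$, and clearing denominators yields the required nonzero integers $k', k''$ with $k'[D'] = k''[D'']$. The step I expect to be least automatic is the positive/negative part decomposition that produces an effective class in $K$, since it is precisely what lets us handle the $\theta^2 = 0$ case in which classical Hodge Index does not apply to $\theta$ directly.
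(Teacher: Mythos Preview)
The paper does not prove this theorem; it is quoted (in a ``slightly non-standard form'') from \cite{DJS} without argument.  Your approach is the standard one and is essentially correct, but three points need attention.

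First, in the positive/negative decomposition step your sign analysis is wrong as written.  Since $v^+$ and $v^-$ have disjoint supports and distinct irreducible curves meet nonnegatively, one has $(v^+)^T M v^- \geq 0$, so the middle term $-2(v^+)^T M v^-$ is \emph{nonpositive}, not nonnegative.  With your stated signs (middle $\geq 0$, outers $\leq 0$) one cannot conclude that the outer terms vanish.  With the correct signs all three terms are $\leq 0$ and sum to zero, so each vanishes; then $(v^\pm)^T M v^\pm = 0$ together with negative semidefiniteness of $M$ gives $v^\pm\in K$ as you need.

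Second, when you form $W=\R[D']+\R\theta$ you assert $\theta^2=0$ without justification.  This follows because if $\theta^2>0$ then classical Hodge index makes $\theta^\perp$ negative definite, contradicting the existence of the nonzero class $[D']\in\theta^\perp$ with $[D']^2=0$.  You should say this.

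Third, your argument for the final assertion shows only that $k'[D']=k''[D'']$ in the N\'eron--Severi group, i.e.\ that $k'D'$ and $k''D''$ are \emph{numerically} equivalent.  Passing from numerical to linear equivalence requires $\pic^0(S)=0$ or some further reasoning you have not supplied.  This is harmless for the paper's applications (which, per \S\ref{egs}, are effectively to surfaces with $\kod(S)=-\infty$ and ultimately to rational surfaces, where $\pic^0=0$), but the theorem is stated for arbitrary projective $S$, so as written this is a genuine gap.
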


\section{Classification and Examples}
\label{egs}

Complex surfaces are classified into a number of different families, but not all of them are equal for our purposes.  Here we will give several particular examples of rational maps $f$ that preserve two-forms $\eta$ on various types of surfaces $S$.  We will also argue that in most cases the possibilities for invariant two-forms and for maps that preserve them are rather restricted.  In brief, maps of interest are far more plentiful when $S$ is rational, i.e. birationally equivalent to $\cp^2$.  For other types of surfaces $S$, a rational map $f:S\to S$ automatically preserves some other geometric feature (a canonical fibration or holomorphic two-form) of $S$, and this limits the set of maps to which the results of this paper will apply.
 
We will rely heavily here on the classification of compact complex surfaces as described in \cite[\S VI.1]{BHPV}.  Throughout, we let $\kod(S)\in\{-\infty,0,1,2\}$ denote the Kodaira dimension of $S$.  

\subsection{Maps on rational surfaces.}
Up to conjugacy, the only non-trivial rational functions $f:\cp^1\to \cp^1$ that preserve meromorphic one-forms are affine maps $z\mapsto az+b$ which preserve $dz$, and power maps $z\mapsto z^k$ which preserve $dz/z$.  By contrast there are many rational maps $f:\cp^2\to\cp^2$ that preserve (zero-free) two-forms $\eta$.  Theorem {\zeradicate } and Proposition {\tricotomy }, however, tell us that there are only three main possibilities for the form $\eta$.  We begin by proving the latter.

\proofof{Proposition {\tricotomy}}
Since contracting curves in $S$ will not create zeros for $\eta$, we may suppose that $S$ is minimal; i.e. if $S\neq \cp^2$, then $S$ is either $\cp^1\times \cp^1$ or a Hirzebruch surface.  That is, $S$ is fibered by smooth rational curves, and there is a smooth rational curve $E$ transverse to this fibration with $E^2 \leq 0$.  Since $K_S$ is not a multiple of $E$, it follows that $\supp\div\eta$ is not contained in $E$.  So if $E^2<0$, then $E$ is unique and we choose a point $p \in \supp\div\eta\setminus E$, blow it up and contract the strict transform of the fiber that contained $p$.  This reduces $E^2$ by one and, since $\ord(p,\eta)<0$, creates no zeros for $\eta$.  We may therefore repeat this process until
$E^2=0$, in which case $S=\cp^1\times \cp^1$ admits two transverse fibrations by smooth rational curves.  Then we blow up any $p\in\supp\div\eta$ and contract the strict transforms of both fibers that containing $p$.  The resulting surface is $\cp^2$, and the resulting form has no zeros.

If now $-\div\eta$ is not smooth and reduced, then $-\div\eta$ is a singular cubic curve representing one of eight isomorphism classes.  These fall into two subfamilies, according to whether some/any singular point for $-\div\eta$ has order two or three.  We claim that in the first case one can birationally change coordinate so that $\eta = \frac{dx\wedge dy}{xy}$, and in the second, one can change coordinate so that
$\eta = dx\wedge dy$.

We indicate how to do this in one case and leave the reader to puzzle out the rest.  Suppose $-\div\eta = C+L$, where $C$ is a smooth conic and $L$ is a line meeting $C$ in two distinct points.  By blowing up the two intersections and one other point on $C$ and then contracting the three lines joining these points, one obtains a quadratic birational map $\psi:\cp^2\to\cp^2$ such that $-\div\psi^*\eta$ is a sum of three lines in general position.  A further linear transformation moves these lines to the $x$ and $y$ axes and the line at infinity, and a final scaling of coordinates transforms $\eta$ to $\frac{dx\wedge dy}{xy}$.
\qed

In the remainder of this subsection, we illustrate each of the three cases in Proposition {\tricotomy } with various examples.  We also give an example illustrating the necessity of the hypothesis $\lambda_1\neq\lambda_2$ in Theorem {\zeradicate}.

\subsubsection{The smooth case}  Suppose $\div\eta = -C$ where $C\subset\cp^2$ is a smooth cubic curve.  One obtains some birational maps $f:\cp^2\to\cp^2$ preserving $\eta$ as follows (see \cite{Di1} for more details).  Let $p_0, p_1, p_2\in C$ be three distinct, non-collinear 
points of $C$.  Let $q:\cp^2\to\cp^2$ be the (quadratic) birational transformation that blows up each point $p_j$ and collapses each of the three lines that contain two of the $p_j$.  Then $q(C)$ is a cubic curve isomorphic to $C$, and it follows that there is a linear map $T\in\aut(\cp^2)$ such that $T(q(C)) = C$.  Taking $f:=T\circ q$, we have $f(C) = C$ and furthermore that $f$ preserves $\eta$ with $\div\eta = -C$.  By varying the points $p_j$ and taking compositions of the resulting maps $f$, one obtains all (see \cite{Pan}) birational $f$ preserving $\eta$, and many of these maps have complicated dynamics.

We thank Serge Cantat for explaining to us how to get some non-invertible examples in the smooth case.  Let $\pi:X\to \cp^2$ be the blowup of $\cp^2$ at nine distinct points obtained by intersecting with some other cubic curve $C'\subset\cp^2$.  Then the pencil of cubics determined by $C$ and $C'$ in $\cp^2$ lifts (by strict transform) to an elliptic fibration of $X$, and the nine irreducible curves $E_1,\dots, E_9$ contracted by $\pi$ are sections of the fibration.  Let $f_X:X\to X$ be the rational map whose restriction to a general fiber $F$ is given by $(p - E_1|_F) \mapsto 2(p-E_1|_F)$.  Here we are taking advantage of the isomorphism between $F$ and the set $\pic^0(F) \subset \pic(F)$ of classes of degree 0 divisors on $F$.  Clearly $\lambda_2(f) = 4$, and $f$ preserves the elliptic fibration of $X$.  In fact, $f_X$ is holomorphic and non-critical in a neighborhood of any smooth fiber (e.g. the strict transform of $C$).  It follows that $f_X^*\pi^*\eta = 4\pi^*\eta$.  Hence $f_X$ descends to a rational 
map $f:\
\cp^2\to\cp^2$ preserving $\eta$.  By changing the cubic $C'$ used to define $f$, one gets other examples which may be composed with each other or with birational maps that preserve $\eta$.

\subsubsection{The toric case}
\label{section_example_toric}
Suppose (in affine coordinates) $\eta = \frac{dx\wedge dy}{x y}$, so that $C = -\div\eta$ is a union of three lines.  Any monomial map $f:(x,y)\mapsto (x^ay^b,x^cy^d)$, with $a,b,c,d\in\Z$, satisfies $f^*\eta = (ad-bc)\eta$.  As it happens the topological degree of $f$ is $\lambda_2=|ad-bc|$.  Hence $f$ is birational if and only if $f^*\eta = \pm\eta$. Note also that any monomial map restricts to an unbranched self-cover $(\C^*)^2\to(\C^*)^2$ on the complement of $\supp\div\eta$

One obtains other birational maps $f$ preserving $\eta$ as in the smooth case above, the only additional consideration being that each irreducible component of $C$ contains exactly one of the three points $p_j$.  In this case, one is led to the very simple formula (in homogeneous coordinates)
\begin{equation}
\label{idaction}
f([x_0:x_1:x_2]) = [x_0\ell_0 : x_1\ell_1 : x_2\ell_2]
\end{equation}
where $\ell_j:\C^3\to \C$ are linear forms chosen so that for $j\neq k$, the lines $\{\ell_j=0\}$ and $\{x_k=0\}$ meet at $p_k$.
For instance, the lines $\{\ell_0=0\}$, $\{\ell_1=0\}$ and $\{x_2=0\}$ meet at a common point $p_2$, so we can write
$\ell_1 = a \ell_0 + b x_2$ for some $a,b\in\C^*$. We can also write $\ell_2 = c \ell_0 + d x_1$ for some $c,d\in\C^*$ for the same reason.
One can then see that $f$ contracts each line $\{\ell_0=0\}$ to the point $[0:b:d]\in \{x_0=0\}$.
Similarly, we conclude that $f$ contracts the line $\{\ell_j=0\}$ to some point on $\{x_j=0\}$.
Therefore, unlike monomial maps, these $f$ contract lines $\{\ell_j=0\}$ that are not poles of $\eta$.  On the other hand, $f$ maps each pole $\{x_j=0\}$ of $\eta$ to itself by a linear transformation, and we will see more generally below that if $\pi:X\to \cp^2$ is any elaboration of $\eta$, the lift $f_X:X\to X$ of $f$ to $X$ preserves all irreducible components of $\div\pi^*\eta_X$.

A still different birational map preserving $\eta$ is given in affine coordinates by $f:(x,y)\mapsto (y,\frac{1+y}{x})$.  One checks easily that $f^5 = \id$, and that $f$ lifts to an automorphism of a rational surface obtained by blowing up two of the three double points of $C$.  This map appears in the preprint \cite{Us} of Usnich, which presents conjectures later proved by Blanc \cite{Bl1} about the group $\symp$ of plane Cremona transformations preserving $\frac{dx\wedge dy}{xy}$.

Note that for all these toric examples, and therefore for any map $f$ in the semigroup they generate, one has $f^*\eta = \delta\eta$ where $\delta = \pm\lambda_2$.  We will prove below that $\delta$ must be an integer dividing $\lambda_2$ for \emph{any} rational map preserving $\frac{dx\wedge dy}{xy}$, but we do not know whether $|\delta|$ must actually equal $\lambda_2$ in general.

\subsubsection{The Euclidean case}  Finally we discuss some rational maps that preserve $\eta=dx\wedge dy$.  The leading example here is that of polynomial automorphisms $f:\C^2\to\C^2$ whose dynamics have been considered in many papers (e.g. \cite{BeSm1,BLS,FoSi2}).  Any such $f$ extends to a birational map on $\cp^2$ satisfying $f^*\eta = \delta\eta$, and $\delta$ can be any non-zero complex number.

One can get more birational maps preserving $dx\wedge dy$ by the methods indicated for the smooth case.  As we explained in the proof of Proposition {\tricotomy }, this form is birationally equivalent to the form $\tilde \eta = \frac{dx\wedge dy}{y-x^3}$ with $-\div\tilde\eta$ equal to a cuspidal cubic.  The latter is more convenient here, because one can then construct birational maps preserving $\tilde \eta$ exactly as in the smooth case and conjugate them back to birational maps preserving $dx\wedge dy$.  These include many maps (see \cite{BeKi, Mc}) which lift to dynamically non-trivial (i.e. positive entropy) automorphisms on some modification $\pi:X\to\cp^2$.  Since the first dynamical degree $\lambda_1$ is an invariant of birational conjugacy, always equal to an integer for a polynomial automorphism of $\C^2$, and never equal to an integer for a rational surface automorphism with positive entropy, one sees that not all birational maps preserving 
$dx\wedge dy$ come from polynomial automorphisms.

One can also obtain non-invertible maps preserving $\eta$ as we did in the smooth case, but Serge Cantat pointed out to us that non-invertible examples can be written down directly: for any non-constant rational function $A:\cp^1\to\cp^1$, the map
$$
f(x,y) = (A(x),\delta y/A'(x))
$$
satisfies $f^* dx\wedge dy = \delta dx\wedge dy$ and has topological degree equal to that of $A$.

\subsubsection{Maps that preserve more than one two-form}
Let us finally illustrate the necessity of the hypothesis $\lambda_1\neq \lambda_2$ in Theorem {{\zeradicate}} by way of a very simple and particular example.

\begin{prop}
\label{badeg}
Let $f:\cp^2\to\cp^2$ be given by $f(x,y) = (2x,y^2)$.  Then $f$ preserves $\eta := \frac{x^n\,dx\wedge dy}{y}$ for any $n\in\N$.  But for any rational surface $X$ and any birational $\psi:X\to \cp^2$, the
form $\psi^*\eta$ has zeros if $n>0$.
\end{prop}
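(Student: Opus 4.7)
First, I would verify the preservation claim by direct calculation: using $f^*x = 2x$, $f^*y = y^2$, $f^*dx = 2\,dx$, and $f^*dy = 2y\,dy$, one immediately computes $f^*\eta = 2^{n+2}\eta$.

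For the harder second claim, my plan is to assume for contradiction that $\psi : X \to \cp^2$ is birational with $\omega_0 := \psi^*\eta$ zero-free, and pass to a common resolution $\sigma_1 : \Gamma \to X$, $\sigma_2 : \Gamma \to \cp^2$ of $\psi$ (so $\sigma_2 = \psi \circ \sigma_1$). Let $L'_x \subset \Gamma$ be the strict transform of $L_x := \{x=0\}$ under $\sigma_2$. Proposition~\ref{jformula} applied to $\sigma_2$ at $L'_x$ gives $\ord(L'_x, \sigma_2^*\eta) = \ord(L_x,\eta) = n > 0$. The easy case is when $\sigma_1$ does \emph{not} contract $L'_x$: then Proposition~\ref{jformula} applied to $\sigma_1$ yields $\ord(\sigma_1(L'_x), \omega_0) = n > 0$, producing a zero of $\omega_0$ on the curve $\sigma_1(L'_x) \subset X$ and contradicting zero-freeness.

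The harder case is when $\sigma_1$ contracts $L'_x$ to some $q \in X$. Here I would exploit the identity $(\sigma_1)_*\div(\sigma_1^*\omega_0) = \div(\omega_0)$, which combined with zero-freeness forces \emph{every} prime component $C \subset \Gamma$ with $\ord(C, \sigma_2^*\eta) > 0$ to be $\sigma_1$-exceptional. Since contracting $L'_x$ requires $L'_x^2 \leq -1$ and $L_x^2 = 1$ in $\cp^2$, the map $\sigma_2$ must blow up $L_x$ at $k \geq 2$ centers (with multiplicity). The explicit formula $\ord(p,\eta) = n\mu(p,L_x) - \mu(p,L_y) - (n+2)\mu(p,L_\infty)$ reveals that the only point on $L_x$ whose exceptional divisor has non-positive order is $p = [0:0:1] = L_x \cap L_\infty$; any other (distinct or infinitesimal) center produces an exceptional $F$ with $\ord(F,\sigma_2^*\eta) \geq 1$. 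For the extremal infinitesimal chain of $k$ blowups over $[0:0:1]$ along the $L_x$-direction, an inductive computation gives $\ord(F^{(\ell)}, \sigma_2^*\eta) = (\ell-1)n + (\ell-2)$, strictly positive for $\ell \geq 2$. Thus in every admissible configuration, the positive-order curves form a connected chain of at least $k$ elements ($L'_x$ together with $k-1$ or more exceptional divisors).

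The proof then concludes by a combinatorial step: successive contraction of $(-1)$-curves can absorb at most $k-1$ of the $k$ positive-order curves, because contracting the unique $(-1)$-curve in the chain raises a neighbor's self-intersection to $0$ and blocks further contraction. Hence at least one positive-order curve survives $\sigma_1$ and descends to a zero of $\omega_0$ on $X$, contradicting zero-freeness. The main obstacle is making this combinatorial step airtight across all admissible configurations of blowup centers on $L_x$ (mixtures of distinct and infinitesimal), which reduces to checking that the intersection form of the positive-order subchain is degenerate so that no full $(-1)$-curve contraction sequence exists; the linear tree structure inherited from the $\sigma_2$-blowup process should make this verification routine.
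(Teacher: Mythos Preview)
Your overall strategy is sound and the easy case (when $\sigma_1$ does not contract $L'_x$) is handled correctly. One cosmetic slip: the unique point of $L_x$ with negative order is $q_\infty := L_x \cap L_\infty = [0:1:0]$, not $[0:0:1]$ (the latter is the origin $L_x\cap L_y$, where $\ord(p,\eta)=n-1\ge 0$); your order formula is correct, so this does not affect the logic.

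The genuine gap is the final combinatorial step. You compute orders and self-intersections only for the single ``extremal'' chain of $k$ infinitely-near blowups over $q_\infty$ along the $L_x$-direction, and then assert that contraction stalls after at most $k-1$ steps ``in every admissible configuration.'' But a general $\sigma_2$ may mix blowups over $q_\infty$ in various tangent directions with blowups over other points of $L_x$ and elsewhere; the dual graph of the positive-order locus need not be a linear chain, it may contain several $(-1)$-curves, and your key sentence (``contracting the unique $(-1)$-curve raises a neighbor to $0$'') simply does not apply as stated. You correctly identify that what is really needed is that the intersection form on the positive-order curves fails to be negative definite (so Grauert's criterion forbids $\sigma_1$ from contracting them all), but you do not prove this---calling the verification ``routine'' is not a proof.

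The paper avoids the combinatorics entirely by exhibiting an explicit effective divisor $D$, supported on positive-order curves, with $D^2\ge 0$. If $\sigma_2$ blows up nothing over $q_\infty$, take $D=\sigma_2^*L_x$: every center on $L_x$ then satisfies $\ord(p,\eta)\ge 0$, so (Lemma~\ref{orderdecreases}) every irreducible component of $D$ is a zero of $\sigma_2^*\eta$, while $D^2=L_x^2=1$. If $\sigma_2$ does blow up over $q_\infty$, first factor through the single blowup $\sigma:\hat{\cp^2}\to\cp^2$ at $q_\infty$; on $\hat{\cp^2}$ the strict transform $L$ of $L_x$ has $L^2=0$ and now $\ord(p,\sigma^*\eta)\ge 0$ at \emph{every} $p\in L$ (the bad point has been replaced by a simple pole along $\sigma^{-1}(q_\infty)$), so the same argument works with $D$ the pullback of $L$ to $\Gamma$. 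In either case $D^2\ge 0$ directly witnesses non-negative-definiteness, and no case analysis of blowup trees is needed.
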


\begin{proof}
Let $\Gamma$ be the graph of $\psi$ and $\pi_1:\Gamma\to X$, $\pi_2:\Gamma\to \cp^2$ be projections onto domain and range.  
Suppose for the moment that $\pi_2$ does not contract any curve to the unique point $q_\infty \in \{x=0\}\cap \C^2$ at infinity along $\{x=0\}$.  We have $\ord(p,\eta)\geq 0$ for every $p\in \{x=0\}\cap\C^2$, including the $(0,0)= \{x=0\}\cap\{y=0\}$.  Hence it follows from Lemma \ref{orderdecreases} below that $\pi_2^*\eta$ has a zero along every irreducible component of $\pi_2^*\{x=0\}$.  Therefore the same lemma tells us that if $\psi^*\eta$ has no zeros, then $\pi_1$ contracts every irreducible component of $\pi_2^*\{x=0\}$.  Since $\pi_2^*\{x=0\}$ has positive self-intersection (equal to that of $\{x=0\}$), and $\pi_1$ is a modification, this is impossible.

Now suppose instead that $\pi_2^{-1}(q_\infty)$ is one dimensional, i.e. $\pi_2 = \sigma\circ \tilde\pi_2$ factors through the blowup $\sigma:\hat{\cp^2}\to\cp^2$ of $\cp^2$ at $q_\infty$.  Then $\sigma^*\eta$ has a zero of order $n$ along the strict transform $L$ of $\{x=0\}$ and, as one can see from direct computation, a simple pole along 
$\sigma^{-1}(q_\infty)$.  In particular, $\ord(p,\sigma^*\eta) \geq 0$ at \emph{every} point in $L$.  Since $L^2 = 0$ is non-negative, the argument from the previous paragraph again implies that $\psi^*\eta$ must have zeroes.
\end{proof}

Hence it is not always possible to eliminate the zeros of an invariant two-form using birational coordinate change.  The problem, at least in this instance, may be viewed as one of uniqueness.  That is, while we cannot eliminate the zeros of $\eta$, there is a another invariant two-form $\frac{dx\wedge dy}{xy}$ which is zero-free, and the difference between the divisors of the forms is supported on lines that generate an $f$-invariant pencil in $\cp^2$.

\subsection{Irrational surfaces with negative Kodaira dimension}
\label{minusinf}

Now suppose that $S$ is an irrational surface with $\kod(S) = -\infty$.  Then $S$ is birationally equivalent to $B\times\cp^1$, where $B$ is a smooth curve of positive genus, and the projection $\phi:(x,y)\to x$ onto $B$ is the Albanese fibration.  Hence, any rational self-map $f$ of $B\times \cp^1$ preserves fibers of $\pi$ and induces a holomorphic map $\check f:B\to B$ on the base.  When $g:=\mathrm{genus}\,B >1$, the base map $\check f$ is a finite order automorphism, and it follows (see \cite[Lemma 4.1]{DDG1}) that $\lambda_1(f) = \lambda_2(f)$, regardless of whether or not $f$ preserves a meromorphic two-form, so Theorem {\zeradicate } is vacuous unless $g=1$.

Concerning the conclusion of Theorem {\zeradicate }, it turns out that we can describe, up to birational conjugacy, all rational self-maps of $S$ that preserve a zero-free two-form $\eta$.  The key part is to put $\eta$ into a normal form, parallel to Proposition {\tricotomy }.  Recall \cite[V.2]{Har} that any minimal surface birationally equivalent to $B\times \cp^1$ is a $\cp^1$-bundle $\phi:S\to B$.  As with Hirzebruch surfaces there is a section $\sigma$ of this bundle with $\sigma^2 \leq 0$ minimal.  We have $S=B\times\cp^1$ if and only if $\sigma^2 =0$, in which case one can take $\sigma$ to be any horizontal slice $\{y=const\}$.  When $\sigma^2<0$, the section $\sigma$ is unique.  In any case, any divisor on $S$ is numerically equivalent to an integer combination of $\sigma$ and any fiber $V$ of $\phi$.  In particular the genus formula applied to $\sigma$ and $V$ implies that $$
K_S \sim -2\sigma + (2g-2+\sigma^2)V,
$$
where $K_S$ is the canonical class on $S$ and $\sim$ denotes numerical equivalence. 

\begin{lem}
\label{dicotomy}
Let $S$ be an irrational surface with $\kod(S) = -\infty$ and $\eta$ be a zero-free meromorphic two-form on $S$.  Then after a birational change of coordinates, we may suppose that $S$ is a $\cp^1$-bundle over a curve $B$ with $g:=\mathrm{genus}\,B \ge 1$ and either
\begin{itemize}
 \item $g>1$ and $\div \eta = -2\sigma$, where $\sigma$ is the unique section with minimal self-intersection $\sigma^2<0$;
 \item $g=1$, $S = B\times \cp^1$ is a product, and $\div\eta = -H_1 - H_2$, where the $H_j = \{y=y_j\}$ are (possibly equal) horizontal slices.
\end{itemize}
\end{lem}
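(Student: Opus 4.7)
The plan is to reduce $S$ to a $\cp^1$-bundle over $B$, analyze the effective anticanonical divisor $D := -\div\eta$ by means of the formula $K_S \sim -2\sigma + (2g-2+\sigma^2)V$, and then perform elementary transformations to eliminate fiber components of $D$.

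First I would reduce to a minimal model: since $\kod(S) = -\infty$ and $g(B) \geq 1$, successive blowdowns of $(-1)$-curves (whose pushforward preserves zero-freeness of $\eta$) turn $S$ into a $\cp^1$-bundle $\phi:S\to B$. Let $\sigma$ be a section realizing the minimal self-intersection $e_0 := \sigma^2$, and decompose $D = m\sigma + D'$ with $\sigma \not\subset \supp D'$. The class identity $D \sim 2\sigma - (2g-2+e_0)V$ together with the effectiveness constraints $D' \cdot V \geq 0$ and $D' \cdot \sigma \geq 0$ (using $D \cdot \sigma = e_0 - 2g + 2$) force $m = 2$ and $D' = D_v$ purely vertical, with $\deg D_v = -e_0 - (2g-2)$.

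Next I would eliminate the fiber components of $D_v$ by elementary transformations. For each irreducible pole fiber $V_i$ appearing with multiplicity $n_i$, I would choose $p \in V_i \setminus \sigma$; blowing up $p$ and blowing down the strict transform of $V_i$ produces a new $\cp^1$-bundle in which, by the formula $\ord(E',\eta') = \ord(p,\eta) + 1$, the new fiber over $\phi(V_i)$ has pole order $n_i - 1$, while $\sigma^2$ increases by one. Iterating $n_i$ times per fiber eliminates all fiber poles and raises $\sigma^2$ from $e_0$ to $e_0 + \deg D_v = 2 - 2g$. What remains is $D = 2\sigma$.

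For $g > 1$ the new $\sigma^2 = 2 - 2g < 0$ makes $\sigma$ automatically the unique section of minimal self-intersection, establishing the first case. For $g = 1$ we have $\sigma^2 = 0$; if needed, a further birational modification reduces to $S = B \times \cp^1$, after which $D$ lies in the linear system $|2\sigma|$, which is the pullback of $|\mathcal{O}_{\cp^1}(2)|$ under the second projection. Hence every effective member of $|2\sigma|$ is of the form $H_{y_1} + H_{y_2}$ for (possibly equal) horizontal slices, giving the second case. The most delicate step will be the final reduction to the trivial bundle when $g = 1$: non-trivial $\cp^1$-bundles over an elliptic curve can carry sections of self-intersection zero, and one must pick the subsequent elementary transformations carefully enough both to change the bundle type and not to introduce zeros in the transformed $\eta$.
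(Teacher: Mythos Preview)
Your argument has a genuine gap at the step where you claim the intersection constraints ``force $m=2$ and $D'=D_v$ purely vertical.''  This is simply false when $g=1$.  Take $S=B\times\cp^1$ with $B$ elliptic and $-\div\eta = H_1+H_2$ for two distinct horizontal slices.  If you pick $\sigma=H_1$ (a perfectly legitimate section of minimal self-intersection $e_0=0$), then $m=1$ and $D'=H_2$ is horizontal, not vertical.  Your two inequalities $D'\cdot V\geq 0$ and $D'\cdot\sigma\geq 0$ translate to $m\leq 2$ and $(1-m)e_0\geq 2g-2$; for $g=1$ the second is vacuous when $m=1$ and forces only $e_0=0$ when $m=0$, so neither value of $m$ is excluded.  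The paper's proof confronts exactly this: it decomposes $-\div\eta = a\sigma + D_V + D$ with $D$ transverse to both $\sigma$ and fibers, and shows that when $D\neq 0$ one necessarily has $g=1$, $D_V=0$, and $a\in\{0,1\}$.  The case $a=1$ (your missing $m=1$) is then handled by a separate sequence of elementary transformations centered on the \emph{second} section $D$ rather than on fibers, equalizing $\sigma^2$ and $D^2$ until both vanish; only then does the presence of two disjoint sections of self-intersection zero force $S=B\times\cp^1$.

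A related point: your last paragraph correctly flags the difficulty of reducing a non-trivial bundle with $e_0=0$ over an elliptic $B$ to the product, but you never actually address it.  In the paper this issue is absorbed into the $a=1$ argument (two disjoint zero-sections force decomposability), which is precisely the ingredient your outline is missing.  For $g>1$ your scheme is essentially the paper's (modulo centering the elementary transformations at $p\in V_i\setminus\sigma$ rather than $p\in\sigma\cap V_i$, which is harmless), but the $g=1$ case needs the extra analysis.
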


\begin{proof}
As in the proof of Proposition {\tricotomy } we may suppose that $S$ is minimal. Given a zero-free two-form $\eta$, we write 
$$
-\div\eta = a\sigma + D_V + D
$$ 
where $a\geq 0$, $D_V$ is supported on a finite union of fibers of $\phi$, and $D$ is an effective divisor transverse to both $\sigma$ and to fibers.
Now if $D=0$, then we can further eliminate the components $D_V$ as follows: blow up a point $p\in \sigma\cap \supp D_V$ and contract the fiber $V$ of $\phi$ incident to $p$.  The result is a birational map $\psi:\tilde S \to S$ from a different $\cp^1$-bundle $\tilde S\to B$ in which the fiber $V$ is replaced by the exceptional curve $\tilde V$ obtained by blowing up $p$.  By Hurwitz formula 
$$
\ord(\tilde V,\psi^*\eta) = \ord(p,\eta) + 1 = \ord(V,\eta) + \ord(\sigma,\eta) + 1 = \ord(V,\eta) - 1.
$$
Hence we have eliminated one component (counting multiplicity) of $D_V$.  After repeating finitely many times, $D_V = 0$. So $-\div\eta = -2\sigma$.
Using the numerical equivalence formula for $K_S$ above, we find that $\sigma^2 = 2-2g$.  That is, $\sigma^2 = 0$ if and only if $g=1$.  Hence the lemma is true under the assumption $D=0$.

If on the other hand $D>0$, then $D$ is not numerically equivalent to a multiple of $V$, and we must have
$$
0 < D\cdot V = (-K_S-D_V-a\sigma)\cdot V = (2-a).
$$
So $a\in\{0,1\}$. Similarly,
$$
0 \leq D\cdot \sigma = -\sigma^2(a-1) - (2g-2) -\sigma\cdot D_V,
$$
where all terms on the right are non-positive and hence are all zero.  Thus $D$ can be non-trivial only if $D\cdot\sigma = 0$, which in turn implies that $g=1$ and $D_V =0$.  If $a=0$, then we must also have $\sigma^2 = 0$.  Thus $S = B\times \cp^1$ is a product and $D\sim 2\sigma$ is supported on a pair of horizontal curves $\{y=const\}$, i.e. the second conclusion of the Lemma holds.  

If $a=1$ then $D\cdot V = 1$ and $D\cdot\sigma = 0$ imply that $D$ is a section of $\phi$ that does not meet $\sigma$.  Since $g=1$, we have 
$$
0 = K_S^2 = (\sigma + D)^2 = \sigma^2 + D^2,
$$
so $D^2 = -\sigma^2$.  If $\sigma^2<0$, then we can blow up a point $p\in \supp D$ and contract the vertical fiber incident to $p$, obtaining a birational map $\psi:\tilde S\to S$ where $\tilde S\to B$ is another $\cp^1$ bundle in which $\div\psi^*\eta = \psi^{-1}(\sigma) + \psi^{-1}(D)$ and $\psi^{-1}(D), \psi^{-1}(\sigma)$ are disjoint sections whose self-intersections have decreased/increased by $1$, respectively. By repeating this proceedure finitely many times, we arrange that $\sigma^2= D^2 = \sigma\cdot D = 0$, so that $S=B\times \cp^1$ and $\sigma$ and $D$ are horizontal slices.
\end{proof}

\begin{thm}
\label{irrationalegs}
Suppose that $S$ is a complex projective surface birationally equivalent to $B\times \cp^1$ for some smooth curve $B$ with positive genus $g$.  Suppose $f:S\to S$ be a rational map preserving a zero-free meromorphic two-form $\eta$.  If $g>1$, then $f$ is birational and $\lambda_1(f) = 1$.  If $g = 1$ then we may conjugate so that $S = B\times \cp^1$ and either
\begin{itemize}
\item (irrational Euclidean case) $\eta = dx\wedge dy$ and $f(x,y) = (\check f(x), \alpha (x)y + \beta (x))$ for some holomorphic functions $\alpha,\beta:B \to\cp^1$; or
\item (irrational smooth case) $\eta = dx \wedge \frac{dy}{y}$ and $f(x,y) = (\check f(x), \alpha(x) y^k)$ for some $k\neq 0$ and holomorphic $\alpha:B \to \cp^1$.
\end{itemize}
Note that we use $dx$ here to denote the standard non-vanishing one-form on the genus 1 curve $B$ thought of as a quotient of $\C$.
\end{thm}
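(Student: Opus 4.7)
My plan splits on the genus of $B$, and in both cases begins by applying Lemma~\ref{dicotomy} to normalize $\eta$. A useful general observation is that, since $\eta$ is zero-free and $f^*\eta=\delta\eta$ is likewise zero-free, $f$ cannot contract any curve; a contracted curve $C$ would make $f^*\eta$ vanish along $C$, contradicting zero-freeness.

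For $g>1$, Lemma~\ref{dicotomy} reduces us to the minimal $\cp^1$-bundle $\phi\colon S\to B$ with a unique section $\sigma$ of negative self-intersection and $\div\eta=-2\sigma$. The induced map $\check f\colon B\to B$ is an automorphism by Riemann--Hurwitz, and equation~\eqref{hurwitz} gives
\[
\crit(f)=\div f^*\eta - f^*\div\eta = -2\sigma + 2f^*\sigma = 2(f^*\sigma-\sigma),
\]
so $f^*\sigma-\sigma\ge 0$. I would then show $f^*\sigma=\sigma$ by matching coefficients on the two sides of this displayed identity: if $\sigma$ has multiplicity $m$ in $f^*\sigma$, then $\sigma$ contributes $m-1$ to $\crit(f)$ as a branch curve versus $2(m-1)$ on the right, forcing $m=1$; similarly, for any other component $C$ of $f^*\sigma$ (necessarily non-contracted and mapping onto $\sigma$ with $m(f,C)$ equal to its coefficient in $f^*\sigma$) the analogous comparison $m(f,C)-1=2\,m(f,C)$ is impossible. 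Intersecting $f^*\sigma=\sigma$ with a generic fiber $V$, the adjoint identity yields $1=\sigma\cdot V=f^*\sigma\cdot V=\sigma\cdot f_*V=\deg(f|_V)$, so $f$ is birational. The identity $\lambda_1=\lambda_2$ for $g>1$ noted at the top of the subsection then gives $\lambda_1(f)=1$.

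For $g=1$, Lemma~\ref{dicotomy} lets me assume $S=B\times\cp^1$ and $\div\eta=-H_1-H_2$ for horizontal slices, which after an automorphism of the $\cp^1$ factor lie either both at $y=\infty$ (Euclidean subcase) or at $y=0$ and $y=\infty$ (smooth subcase). Writing $\eta=h(x,y)\,dx\wedge dy$ and using $\div(dx\wedge dy)=-2\{y=\infty\}$ (since $dx$ is nowhere-vanishing on the elliptic curve $B$ while $dy$ has a double pole at $\infty$), I solve $\div h=\div\eta-\div(dx\wedge dy)$: the Euclidean subcase gives $\div h=0$, forcing $h$ to be a constant; the smooth subcase gives $\div h=\div(1/y)$, so $h$ is a constant multiple of $1/y$. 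After rescaling, $\eta=dx\wedge dy$ or $\eta=dx\wedge dy/y$ respectively.

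To determine $f$ itself, I use that $f$ preserves the Albanese fibration, so $f(x,y)=(\check f(x),G(x,y))$ with $\check f$ an endomorphism of the elliptic curve $B$ satisfying $\check f^*dx=m\,dx$ for some constant $m\in\C^*$. In the Euclidean case $f^*(dx\wedge dy)=mG_y\,dx\wedge dy=\delta\,dx\wedge dy$ forces $G_y=\delta/m$, so $G(x,y)=(\delta/m)\,y+\beta(x)$ with $\beta\colon B\to\cp^1$ holomorphic, matching the stated form. In the smooth case $f^*\eta=(mG_y/G)\,dx\wedge dy$ equated with $\delta\,dx\wedge dy/y$ gives $G_y/G=\delta/(my)$, whose integration yields $G(x,y)=\alpha(x)\,y^{\delta/m}$; rationality of $G$ then forces $k:=\delta/m\in\Z\setminus\{0\}$ and $\alpha\colon B\to\cp^1$ holomorphic. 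The main obstacle is the $g>1$ step, specifically ruling out extraneous components of $f^*\sigma$ via coefficient comparison, which crucially relies on $f$ contracting no curves.
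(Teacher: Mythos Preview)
Your opening ``general observation'' that $f$ cannot contract any curve is false.  The birational maps described in \eqref{idaction} preserve the zero-free form $\frac{dx\wedge dy}{xy}$ on $\cp^2$ yet contract each line $\{\ell_j=0\}$ to a point on the polar locus.  The flaw in your reasoning is that when a contracted curve $C$ lands on a pole of $\eta$, the pole can cancel the Jacobian zero along $C$, so $\ord(C,f^*\eta)$ need not be positive.  In fact this is exactly what Proposition~\ref{miscmapprop}(2) records: contracted curves always land on $\supp\div\eta$, so your heuristic never yields a contradiction.

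This breaks your $g>1$ argument precisely where you flagged it.  Your coefficient comparison does rule out any \emph{non-contracted} extra component $C$ of $f^*\sigma$, since $m(f,C)-1=2\,m(f,C)$ is impossible; but if $C$ is contracted to a point on $\sigma$ one gets $\ord(C,\crit f)=2\,\ord(C,f^*\sigma)$ from your displayed identity, which is perfectly consistent.  The approach is salvageable: since $\check f$ is an automorphism of $B$, any contracted curve must be a fiber, so $f^*\sigma=\sigma+\sum a_iV_i$ as a divisor; intersecting with a general fiber $V$ and using adjointness then gives $1=f^*\sigma\cdot V=\sigma\cdot f_*V=d$, and you are done.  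The paper takes a different route: it replaces $f$ by $f^n$ so that $\check f^n=\id$, trivializes the bundle over a coordinate disk so that $\eta=dx\wedge dy$ locally, and observes that the fiber maps of $f^n$ must preserve $dy$ and are therefore affine.

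Your $g=1$ argument is correct and is essentially the paper's: normalize $\eta$ via Lemma~\ref{dicotomy}, then compute $f^*\eta$ directly to see that the fiber maps preserve $dy$ (Euclidean) or $dy/y$ (smooth), forcing them to be affine or monomial in $y$ respectively.
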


One checks easily that any map of one of the two forms given in the case $g=1$ will preserve the corresponding two-form 
$\eta$.  In particular, when $\eta = dy/y$, we have $\lambda_2(f) = k(\deg \check d)$ and $\lambda_1(f) = \max\{k,\check d\}$ where $\check d$ is the topological degree of the base map $\check f$.  So if $\check d>1$, it follows that $\lambda_1\neq \lambda_2$ and Theorem {\zeradicate } may be used to detect maps birationally conjugate to the ones described here.

\begin{proof}
Suppose first that $g=1$.  From the lemma, we may suppose that $S = B\times \cp^1$ and either $\eta = dx \wedge dy$ ($\eta$ has a unique double pole, i.e. $\div \eta = -2\{y=\infty\}$) or $\eta = dx\wedge\frac{dy}{y}$ ($\eta$ has simple poles along $\{y=0\}$ and $\{y=\infty\}$).  Since $f(x,y) = (\check f(x),f_2(x,y))$, and the base map $\check f$ preserves $dx$, it follows that the restriction $f_2(x_0,\cdot):\{x=x_0\}\to \{x = f(x_0)\}$ of $f$ to a general fiber must preserve either $dy$ or $dy/y$, respectively.  If fiber maps preserve $dy$, then they are affine, and $f$ is birational.  If fiber maps preserve $dy/y$, then we have $f_2(x_0,y) = \alpha(x_0)y^k$, where $R$ varies holomorphically with $x_0$.

When $g>1$, we again conjugate so that $S$ is a $\cp^1$-bundle over $B$ and $\div\eta = -2\sigma$.   We have $\check f^n = \id$ for some $n\ge 1$ so by trivializing $S\to B$ over any coordinate disk $U\subset B$, we can arrange that $\phi^{-1}(U) = U \times \cp^1$, $\eta = dx\wedge dy$ (in particular, the section $\sigma$ is sent to $\{y=\infty\}$) and $f^n|_U:(x,y)\mapsto (x,f_2(y))$.  Since $f$ preserves $\eta$, $f_2$ must preserve $dy$.  That is, $f_2$ is affine.  We conclude that $f^n$ and therefore also $f$ is birational.
\end{proof}

The maps in Theorem \ref{irrationalegs} need not be algebraically stable.  However, Theorem \ref{irrationalegs}, Theorem {\sbe} and \cite{DiFa} implies that if $f:S\to S$ is rational map preserving a zero-free two-form on a fibered surface $S\to B$ with $g=\mathrm{genus}\,B > 1$, then $f$ is automatically corrigible.  In fact the arguments of \cite[Theorem 0.1]{DiFa} extend to maps that are merely \emph{locally} birational, so maps in the irrational euclidean case of Theorem \ref{irrationalegs} are also corrigible.  The irrational smooth case is different, and we will treat it together with the rational smooth case in Corollary \ref{corrigiblesmooth}.  For now, we point out that when the induced map on the base $\check f = \id$, and the fiber maps are not affine, then recent work of DeMarco and Faber \cite{DeFa} gives a very different approach to algebraic stability.

\subsection{Surfaces $S$ with non-negative Kodaira dimension}
\label{kodimge0}

The results of this paper are more or less irrelevant when $\kod(S) \geq 0$.

For instance if $\kod(S) > 0$, the map $\phi_m:S\to \cp^N$ given by sections of $mK_S$ is non-trivial for $m$ large enough, and since $f$ induces a linear pullback $f^*:H^0(S,mK_S)\to H^0(S,mK_S)$, it follows that $f$ preserves fibers of $\phi_m$ and acts on the image $\phi_m(S)\subset\cp^N$ by the restriction of a linear transformation.  When $\kod(S) = 2$, $\phi_m$ is birational, and it follows that $\lambda_1 = \lambda_2= 1$.  When $\kod(S) = 1$, the image $\phi_m(S)$ is a curve, and it follows that 
$\lambda_1 = \lambda_2$.  So the hypothesis of Theorem {\zeradicate } is never satisfied.  Moreover, there are no zero-free meromorphic sections of $H^0(S,K_S)$ for any $m>0$, so Theorems {\sbe } and {\corrig } are also inapplicable to this case.

If $\kod(S) = 0$, then it was explained in \cite{DDG1} that up to birational conjugacy and finite cover one can assume that $S$ is a complex torus or a K3 surface.  In these cases $S$ admits a nowhere vanishing holomorphic two-form $\eta$ which must be preserved by $f$.  It follows that $f$ cannot contract curves and is therefore automatically algebraically stable.  Moreover, there are no non-trivial elaborations of $\eta$ (see Lemma \ref{orderdecreases} below).  So Theorems {\sbe } and {\corrig } are not relevant for this particular form $\eta$.  If $\tilde\eta$ is another two-form preserved by $f$, and $\tilde\eta$ is not a multiple of $\eta$, then $R:= \tilde\eta/\eta$ is a non-constant rational function satisfying $R\circ f = cR$ for some $c\in\C^*$.  It follows that $f$ preserves a fibration $S\to \cp^1$ and that the induced map on the base is linear.  In particular $\lambda_1(f) = \lambda_2(f)$ (see \cite[Lemma 4.1]{DDG1}).  So Theorem {\zeradicate } is not very helpful here.  Since $R$ is non-constant, $\tilde\eta$ cannot be zero-free so Theorems {\sbe } and {\corrig } do not apply with $\tilde \eta$ in place of $\eta$.

\subsection{Non-projective K\"ahler surfaces}
We assume that surfaces in this paper $S$ are projective largely for convenience of terminology (e.g. `rational' instead of `meromorphic' map, etc.).  A careful reader will note that Theorems {\zeradicate }, {\sbe }, and {\corrig } and their proofs work on \emph{all} compact K\"ahler surfaces.  However, non-projective K\"ahler surfaces always have non-negative Kodaira dimension, so while the theorems are valid, the discussion in the previous subsection shows that they are not very enlightening in the non-projective setting.

\section{Eliminating zeros of invariant two-forms}
\label{classification}
In this section we prove Theorem {\zeradicate}.  So to begin with, $f:S\to S$ is a rational map, and $\eta$ is a meromorphic two-form satisfying $f^*\eta = \delta\eta$ for some $\delta\in\C^*$.  As above, we take $\lambda_1$ and $\lambda_2$ to be the first dynamical and topological degrees of $f$, respectively.
Because $1\leq \lambda_2\leq \lambda_1^2$, Theorem {\zeradicate } holds automatically when $\lambda_1 = 1$.  So we may assume $\lambda_1 > 1$ in what follows.

Since we will modify the domain of $f$ in stages, it will be convenient to use subscripts to keep track of the rational surface(s) we are working on at any given moment.  Suppose $\pi_X:X\to S$ and $\pi_Y:Y\to S$ are birational.  We let $f_{XY}:X \to Y$ denote the lift of $f$, writing $f_X$ in place of $f_{XX}$ when $\pi_X = \pi_Y$.  We also set $\eta_X := \pi_X^*\eta$, $\eta_Y := \pi_Y^*\eta$.  In later sections, we will employ subscripting for points and curves in $X$, too.
Usually, $\pi_X$ and $\pi_Y$ will be modifications, and then we call $f_{XY}$ a `modification' of $f$.

\begin{lem}
\label{orderdecreases}
Let $\pi_X:X\to S$ and $\pi_Y:Y\to S$ be modifications of $S$.  For any irreducible curve $C\subset X$ such that
$\ord(C,\eta_X)\geq 0$, we have $\ord(C,\eta_X) \geq \ord(f_{XY}(C),\eta_Y)$ with equality if and only if $f_{XY}$ does not contract $C$ and $m(f_{XY},C) = 1$.
\end{lem}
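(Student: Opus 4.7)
The plan is to work on the graph of $f_{XY}$ and combine the invariance of $\eta$ with Proposition~\ref{jformula}. Let $\Gamma$ be the graph of $f_{XY}$, with projections $\pi_1\colon\Gamma\to X$ and $\pi_2\colon\Gamma\to Y$. The relation $\pi_Y\circ\pi_2 = f\circ\pi_X\circ\pi_1$ together with $f^*\eta = \delta\eta$ yields the identity $\pi_2^*\eta_Y = \delta\,\pi_1^*\eta_X$ on $\Gamma$, so the two pulled-back two-forms have the same divisor; hence $\ord(D,\pi_1^*\eta_X) = \ord(D,\pi_2^*\eta_Y)$ for every irreducible curve $D\subset\Gamma$. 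Let $\tilde C\subset\Gamma$ be the strict transform of $C$ under $\pi_1$; since $\pi_1$ does not contract $\tilde C$ and $m(\pi_1,\tilde C)=1$, Proposition~\ref{jformula} applied to $\pi_1$ yields $\ord(\tilde C,\pi_1^*\eta_X) = \ord(C,\eta_X)$, and therefore $\ord(\tilde C,\pi_2^*\eta_Y) = \ord(C,\eta_X)\ge 0$.

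If $f_{XY}$ does not contract $C$, then $\pi_2$ does not contract $\tilde C$, the image $\pi_2(\tilde C) = f_{XY}(C)$ is an irreducible curve $C'\subset Y$, and $m(\pi_2,\tilde C) = m(f_{XY},C)$ by multiplicativity of local multiplicity along the strict transform. Applying Proposition~\ref{jformula} to $\pi_2$ gives
\[
\ord(C,\eta_X) + 1 = m(f_{XY},C)\bigl(\ord(C',\eta_Y)+1\bigr).
\]
Since the left side is $\ge 1$ and $m(f_{XY},C)$ is a positive integer, the factor $\ord(C',\eta_Y)+1$ is a positive integer as well; dividing yields $\ord(C',\eta_Y)\le\ord(C,\eta_X)$ with equality if and only if $m(f_{XY},C)=1$, exactly matching the stated criterion.

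For the remaining case, $f_{XY}$ contracts $C$ to a point $p\in Y$, so $\pi_2$ contracts $\tilde C$ to $p$, and I must establish the strict inequality $\ord(\tilde C,\pi_2^*\eta_Y) > \ord(p,\eta_Y)$. I plan to argue by induction on the number of blowups comprising the modification $\pi_2$. The base case of a single blowup at $p$ gives the sharp formula $\ord(E,\rho^*\eta_Y) = \ord(p,\eta_Y)+1$, which can be verified by direct coordinate computation or deduced from the relative canonical divisor formula $K_\Gamma = \pi_2^*K_Y + \sum_E a_E E$ with $a_E\ge 1$. The inductive step uses the universal property of blowups to factor $\pi_2 = \rho\circ\tau$ with $\rho$ the blowup at $p$; then $\tau(\tilde C)\subset E:=\rho^{-1}(p)$, and one splits into the two sub-cases in which $\tau$ either sends $\tilde C$ onto $E$ (apply Proposition~\ref{jformula} to $\tau$, using $\ord(E,\rho^*\eta_Y) = \ord(p,\eta_Y)+1$) or contracts $\tilde C$ to a point $q\in E$ (apply the inductive hypothesis to $\tau$ with the two-form $\rho^*\eta_Y$ and curve $\tilde C$ over $q$).

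The hard part is the contracted case, specifically propagating the strict inequality through the inductive step when poles of $\eta_Y$ have strict transforms meeting the successive blowup centers. At each stage one must verify that the $+1$ contributed by the discrepancy of the newly created exceptional curve is not erased by negative contributions coming from the coefficients of those pole strict transforms through $q$. The hypothesis $\ord(C,\eta_X)\ge 0$ is indispensable here, since it forces the valuation of $\eta_Y$ associated to $\tilde C$ to be non-negative and thereby bounds how severely poles can accumulate along the chain of blowups leading to $\tilde C$. Careful bookkeeping of multiplicities against discrepancies under this factorization is the technical crux of the argument.
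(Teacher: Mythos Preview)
Your treatment of the non-contracted case is correct and is exactly what the paper does: apply Proposition~\ref{jformula} (invariance is built in) and read off both the inequality and the equality criterion.

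In the contracted case, however, your argument is incomplete.  You correctly reduce to showing the strict inequality $\ord(\tilde C,\pi_2^*\eta_Y)>\ord(p,\eta_Y)$ on the graph, and you set up an induction on the length of $\pi_2$.  But in the inductive step where $\tau$ contracts $\tilde C$ to $q\in E$, the inductive hypothesis only gives $\ord(\tilde C,\pi_2^*\eta_Y)>\ord(q,\rho^*\eta_Y)$, and you then need $\ord(q,\rho^*\eta_Y)\ge\ord(p,\eta_Y)$.  As you yourself note, strict transforms of \emph{poles} of $\eta_Y$ passing through $q$ contribute negatively to $\ord(q,\rho^*\eta_Y)$ and can push it below $\ord(p,\eta_Y)$.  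You flag this as ``the technical crux'' but do not resolve it; the hypothesis $\ord(C,\eta_X)\ge 0$ is invoked only in words, not in an actual estimate that closes the gap.  As written, the induction does not go through.

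The paper avoids this difficulty by a different (and shorter) maneuver.  Rather than working on the full graph and inducting, it chooses a modification $\pi:Z\to Y$ of the \emph{target} such that $f_{XZ}$ no longer contracts $C$.  Then the already-proven non-contracted case, applied to $f_{XZ}$, gives $\ord(C,\eta_X)\ge\ord(f_{XZ}(C),\eta_Z)$.  Since $\pi$ contracts the curve $f_{XZ}(C)$ to $p$, one finishes with the Hurwitz-type decomposition
\[
\ord(f_{XZ}(C),\eta_Z)=\ord(f_{XZ}(C),\pi^*\div\eta_Y)+\ord(f_{XZ}(C),\crit\pi)\ge \ord(p,\eta_Y)+1.
\]
The point is that by first ``uncontracting'' $C$ one reduces to a single exceptional curve of a single modification, rather than tracking an arbitrary tower of blowups.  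If you want to salvage your inductive scheme, you would need to strengthen the inductive statement to carry the constraint $\ord(\tilde C,\pi_2^*\eta_Y)\ge 0$ through each step and argue that the specific center $q=\tau(\tilde C)$ (which is determined by $f$, not arbitrary) cannot sit too deeply on pole strict transforms; but the paper's target-modification trick makes all of that unnecessary.
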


\begin{proof}
The case where $f_{XY}(C)$ is a curve is an immediate consequence of Proposition \ref{jformula} and invariance of $\eta_X$.  Otherwise, we may choose a modification $\pi:Z\to Y$ so that $f_{XZ}$ does not contract $C$.  Proposition \ref{jformula} and invariance give again that
$$
\ord(C,\eta_X) \geq \ord(f_{XZ}(C),\eta_Z).
$$
However, $\pi$ contracts $f_{XZ}(C)$, so if $p = f_{XY}(C) = \pi(f_{XZ}(C))$, then
$$
\ord(f_{XZ}(C),\eta_Z) = \ord(f_{XZ}(C),\pi^*\div\eta_Y) + \ord(f_{XZ}(C),\crit(\pi)) \geq \ord(p,\eta_Y) + 1.
$$
\end{proof}

\begin{lem}
\label{intersection}
There exists a modification $\pi_X:X\to S$ such that the decomposition $\div\eta_X = D^+ - D^-$ into effective and anti-effective divisors satisfies $\ord(p,D^-) \leq 1$ for each $p\in\supp D^+$.  That is, $p$ lies in at most one irreducible component $C$ of $D^-$, and if $C$ exists, it is a simple pole for $\eta_X$, and $p$ is a regular point of $C$.
\end{lem}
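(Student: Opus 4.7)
The plan is to pass first to a log resolution of $\div\eta$, then iteratively blow up any remaining ``bad points'' of $\supp D^+ \cap \supp D^-$, where a bad point means one at which $\ord(p,D^-) \geq 2$. A Euclidean-style descent on a local invariant will guarantee termination.

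As a preliminary reduction, I would apply Hironaka's embedded resolution theorem (or a straightforward sequence of point blowups) to obtain a modification $\pi_0:X_0\to S$ on which $\supp(D^+ + D^-)$ is simple normal crossings. After this reduction, at any point $p\in \supp D^+\cap \supp D^-$ the divisor $\div\eta$ locally has the form $aC_1 - bC_2$ with $C_1\subset \supp D^+$ and $C_2\subset\supp D^-$ smooth curves meeting transversally and $a,b\geq 1$. The bad points are then precisely those with $b\geq 2$, and there are only finitely many of them.

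Given such a bad point $p$ with local data $(a,b)$, I would blow it up to obtain $\tilde\pi:\tilde X\to X$ with exceptional divisor $E$. By the Hurwitz formula one computes $\ord(E,\eta_{\tilde X}) = \ord(p,\eta_X) + 1 = a - b + 1$, so locally $\div\eta_{\tilde X} = a\tilde C_1 - b\tilde C_2 + (a-b+1)E$, with $\tilde C_1,\tilde C_2$ the (now disjoint) strict transforms, each meeting $E$ transversally at a single point. A direct case analysis by the sign of $a-b+1$ gives: when $a\geq b$, the only new bad point is $E\cap\tilde C_2$ with data $(a-b+1, b)$; when $a = b-1$, $E$ is absent from $\div\eta_{\tilde X}$ and no new bad point appears above $p$; when $a = b-2$, $E$ is a simple pole and the one candidate point has $\ord(D^-)=1$, hence is not bad; and when $a\leq b-3$, the only new bad point is $E\cap\tilde C_1$ with data $(a, b-a-1)$. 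In each non-terminating subcase the positive integer $a+b$ strictly decreases, so iteration at any individual bad point terminates after finitely many blowups. Since distinct bad points are isolated and blowups are local, they can be processed independently, yielding the required modification after finitely many blowups in total.

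The substance of the argument, and the main obstacle, lies in the case analysis of the second step: one must verify carefully how the exceptional divisor enters $D^+$ or $D^-$ in each subcase, confirm that at most one new bad point is produced in the non-terminating subcases, and check that $a+b$ strictly decreases so that the Euclidean-style descent actually terminates. The first step, by contrast, is a standard appeal to embedded resolution.
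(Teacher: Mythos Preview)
Your argument is correct, but the paper's proof is shorter and avoids both the preliminary log resolution and the case split. Instead of a local invariant, the paper uses the global intersection number $D^+\cdot D^-$. Blowing up any point $p\in\supp D^+\cap\supp D^-$ gives
\[
\hat D^+ - \hat D^- = (\sigma^*D^+ - kE) - (\sigma^*D^- - (k+1)E),\qquad k=\min\bigl(\ord(p,D^+),\,\ord(p,D^-)-1\bigr)\geq 0,
\]
and a one-line computation using $\sigma^*D^\pm\cdot E=0$ and $E^2=-1$ yields $\hat D^+\cdot\hat D^- = D^+\cdot D^- - k(k+1)$. Since $k\geq 1$ precisely when $\ord(p,D^-)\geq 2$, blowing up any bad point strictly decreases the non-negative integer $D^+\cdot D^-$, and termination is immediate. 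No SNC hypothesis is needed at any stage.

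What your approach buys is a completely local picture: you see exactly how the bad point evolves under blowup, which is arguably more transparent. What the paper's approach buys is economy: no appeal to embedded resolution, no four-way case analysis, and a single invariant that handles arbitrary (not just SNC) configurations from the start.
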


\begin{proof}
First let $\pi_X$ be \emph{any} modification of $S$, let $p\in X$ be any point in $\supp D^+\cap \supp D^-$, and let $\sigma:\hat X\to X$ be the blowup of $X$ at $p$.  Let $E$ be the $-1$ curve contracted by $\sigma$.  Then the decomposition $\div\sigma^*\eta =  \hat D^+ - \hat D^-$ into effective and anti-effective divisors may be written
$$
 \hat D^+ - \hat D^- = \sigma^*\div\eta + E = (\sigma^*D^+ - kE) - (\sigma^*D^- -(k+1)E)
$$
for some $k\geq 0$, with $k=0$ if and only if $\ord(p, D^-)=1$.   Hence $\hat D^+\cdot \hat D^- \leq D^+\cdot D^-$ with equality if and only if $k=0$.

From this we infer that we may achieve the first conclusion by blowing up any point $p\in \supp D^+\cap \supp D^-$ such that $\ord(p,D^-) > 1$.  Since $D^+\cdot D^-$ is strictly decreased by this, it follows that after finitely many such blowups, $\ord(p,D^+) \leq 1$ for all $p\in \supp D^-$.
\end{proof}

Let $\divplus\subset \div X$ denote the finite dimensional subspace of divisors supported on the irreducible components of $D^+$.

\begin{lem}
\label{nonexpansion}
Let $\pi_X$ be as in the conclusion of Lemma \ref{intersection}.  The pullback operator $f_X^*:\div(X)\to\div(X)$ restricts to an operator $f_X^*:\divplus\to\divplus$.
No eigenvalue of $f_X^*|_{\divplus}$ has magnitude larger than $1$.
\end{lem}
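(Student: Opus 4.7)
The plan is to exploit invariance of $\eta_X$ under $f_X^*$ via the Hurwitz-type formula in Proposition \ref{jformula}, first to see that $\divplus$ is preserved by pullback, and then to exhibit a similarity showing that $f_X^*|_{\divplus}$ is a contraction in a natural norm.

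First I would set $\ell(C) := \ord(C,\eta_X) + 1$, so that any irreducible component $C_j$ of $D^+$ satisfies $\ell(C_j)\geq 2$. If $C'\subset X$ is any irreducible curve that $f_X$ does not contract and that satisfies $f_X(C')=C_j$, then invariance gives $\ord(C',f_X^*\eta_X) = \ord(C',\eta_X)$, and Proposition~\ref{jformula} reads
$$
\ell(C') \;=\; m(f_X,C')\,\ell(C_j).
$$
In particular $\ell(C')\geq 2$, so $C'$ is itself a component of $D^+$. Since $f_X^*C_j$ is supported on the non-contracted curves $C'$ with $f_X(C')=C_j$, carried with multiplicity $m(f_X,C')$, this shows $f_X^*C_j \in \divplus$, proving the first assertion.

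For the eigenvalue bound, I would write down the matrix $M$ of $f_X^*|_{\divplus}$ in the basis of irreducible components of $D^+$. Its $(C',C_j)$-entry is $m(f_X,C')$ when $f_X(C')=C_j$ and is zero otherwise; in particular each row of $M$ has at most one nonzero entry. Now conjugate by the diagonal matrix $A=\operatorname{diag}(\ell(C_i))$. The identity $m(f_X,C')\,\ell(C_j)=\ell(C')$ collapses every nonzero entry of $A^{-1}MA$ to $1$, so $A^{-1}MA$ is a $\{0,1\}$-matrix with at most one $1$ in each row. Its $\ell^\infty$ operator norm is therefore at most $1$, and hence every eigenvalue of $A^{-1}MA$ — equivalently of $f_X^*|_{\divplus}$ — has modulus at most $1$.

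I expect no serious obstacle beyond this bookkeeping. The only mildly clever ingredient is the diagonal change of basis by log-discrepancies, which absorbs the local multiplicities $m(f_X,C')$ appearing in pullback into a harmless similarity. Note that the hypothesis on $\pi_X$ inherited from Lemma~\ref{intersection} (namely $\ord(p,D^-)\leq 1$ at points $p\in\supp D^+$) plays no visible role in the argument above; it is presumably needed only in later applications of the lemma rather than in its proof.
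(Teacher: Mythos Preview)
Your argument has a genuine gap: you assert that ``$f_X^*C_j$ is supported on the non-contracted curves $C'$ with $f_X(C')=C_j$, carried with multiplicity $m(f_X,C')$.''  This is false for rational (as opposed to holomorphic) maps.  If $C'\subset X$ is an irreducible curve contracted by $f_X$ to a point $p\in C_j$, then $C'$ appears with positive multiplicity in $f_X^*C_j$: passing to the graph $\Gamma$ with projections $\pi_1,\pi_2$, the strict transform $\tilde C'$ of $C'$ satisfies $\pi_2(\tilde C')=p\in C_j$, so $\tilde C'$ is a component of $\pi_2^*C_j$, and $(\pi_1)_*\tilde C'=C'$.  Such a contracted $C'$ need not a priori lie in $\supp D^+$, and its multiplicity in $f_X^*C_j$ is not governed by the formula $m(f_X,C')\,\ell(C_j)=\ell(C')$, which only applies to non-contracted curves.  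So both the invariance of $\divplus$ and your row-sum bound for $A^{-1}MA$ break down.

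This is exactly why the hypothesis from Lemma~\ref{intersection} is needed, contrary to your final remark.  For the first assertion, the paper uses Lemma~\ref{orderdecreases} to get $\ord(C',\eta_X)>\ord(p,\eta_X)$ when $C'$ is contracted to $p\in C_j$, and then Lemma~\ref{intersection} guarantees $\ord(p,\eta_X)\geq 0$ (since $\ord(p,D^+)\geq 1$ and $\ord(p,D^-)\leq 1$), forcing $C'\subset\supp D^+$.  For the eigenvalue bound the paper argues via Perron--Frobenius, choosing a component $C$ of an effective eigenvector with minimal log-discrepancy, and in the contracted case uses Lemma~\ref{intersection} in a delicate chain of inequalities to show $\ord(C,f_X^*C')\leq 1$ for the relevant target component $C'$.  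Your log-discrepancy conjugation is an elegant device for the non-contracted components, but you will still need a separate argument---necessarily invoking Lemma~\ref{intersection}---to bound the contributions from contracted ones.
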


\begin{proof}
For the first assertion, it suffices to show for each irreducible component $C$ of $\divplus$ and each irreducible component $C'$ of $f_X^*C$ that $\ord(C',\eta_X)>0$.  Lemma \ref{orderdecreases} and invariance of $\eta_X$ give us that $\ord(C',\eta_X) \geq \ord(f_X(C'),\eta_X)$.  If $f_X(C') = C$, then the right side of this inequality is positive.  If $f_X(C')$ is a point, then the inequality is strict, and Lemma \ref{intersection} tells us the right side is non-negative.  In either case $\ord(C',\eta_X)>0$.

Concerning the second assertion, let $\lambda$ be an eigenvalue for $f_X^*|_{\divplus}$ of largest magnitude.  Since $f_X^*$ preserves effective divisors, the Perron-Frobenius Theorem implies that $\lambda\geq 0$ and there exists a non-trivial effective divisor $D\in\divplus$ such that $f_X^* D = \lambda D$.  We must show $\lambda\leq 1$.  Let $C$ be an irreducible component of $D$ such that $\ord(C,\eta_X)$ is minimal and (given this) $\ord(C,D)$ is maximal.  Then by Lemma \ref{orderdecreases} we have one of three possibilities for $f_X(C)$.  If $f_X(C)\not\subset \supp D$, then $C \not\subset \supp f_X^*D$, and it must be that $\lambda = 0$.  If $f_X(C) = C'$ for some other irreducible component of $D$, then invariance and Proposition \ref{jformula} give $\ord(C,\eta_X) + 1 = m(f,C)(\ord(C',\eta_X)+1)$.  So $0<\ord(C,D)\leq \ord(C',D)$ implies $\ord(C,\eta_X) = \ord(C',\eta_X)$ and $m(f,C)=1$.  It follows that the weight of $C$ in $f^*D$ is the same as that of $C'$ in $D$, which is no larger than that of 
$C$ in
$D$.  So $\lambda\leq 1$.

The remaining possibility is that $f_X$ contracts $C$ to a point $p\in C'$ for some irreducible component $C'$ of $D^-$.   Let $\pi:Y\to X$ be a modification such that $f_{XY}$ such that $E=f_{XY}(C)$ is a curve.  Then
$$
\ord(C,\eta_X)  \geq \ord(E,\eta_Y) \geq \ord(p,\eta_X) + 1 \geq \ord(C',\eta_X) \geq \ord(C,\eta_X),
$$
where the first two inequalities come from Lemma \ref{orderdecreases} applied to $f_{XY}$ and $\pi$, the third follows from Lemma \ref{intersection} and the fourth from our choice of $C$.  It follows that all inequalities are actually equalities.  Note that equality in the first inequality implies $m(f_{XY},C) = 1$.  Equality in the third inequality and Lemma \ref{intersection} imply that $\ord(p,D^-) = 1$ and $\ord(p,D^+) = \ord(C',D^+)$; hence $p$ is a smooth point of $C'$ and no other irreducible component of $D^+$ contains $p$.  Equality in the fourth inequality and our choice of $C$ implies that $\ord(C,D) \geq \ord(C',D)$.  Therefore,
\begin{eqnarray*}
\lambda & = & \frac{\ord(C,f_X^*D)}{\ord(C,D)} \leq  \frac{\ord(C,f_X^*D)}{\ord(C',D)}= \ord(C,f_X^*C') = \ord(C,f_{XY}^*\pi^*C') \\
              & = & \ord(C,f_{XY}^*E)\ord(E,\pi^*C') = \ord(C,f_{XY}^*E) = m(f_{XY},C) = 1.
\end{eqnarray*}
\end{proof}

Lemma \ref{nonexpansion}, Proposition \ref{perp} and Theorem \ref{hodge} together imply

\begin{cor}
\label{dichotomy}
Let $\pi_X$ be as in the conclusion of Lemma \ref{intersection}, and suppose the first dynamical degree of $f$ satisfies $\lambda_1>1$.  Then the restriction of the intersection form to $\divplus$ is non-positive.  It is negative definite if and only if there is a modification $\pi:X\to \check X$ such that $-\div\eta_{\check X}$ is effective.  Otherwise $\lambda_1 = \lambda_2$, and there is a non-trivial effective divisor $D_*\in\divplus$ with the following additional properties.
\begin{itemize}
 \item $D_*\cdot D = 0$ for all $D\in\divplus$.
 \item $D_*$ is nef.
 \item $D_* \sim f^*D_* \sim \lambda_1^{-1}f_*D_*$, where $\sim$ denotes linear equivalence.
\end{itemize}
If $D_*'$ is another class with the same properties, then there are integers $k,k'>0$ such that $k'D'_*\sim k'D_*'$.
\end{cor}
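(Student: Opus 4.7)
The plan is to combine Lemma \ref{nonexpansion}, Proposition \ref{perp}, and the Hodge Index Theorem (Theorem \ref{hodge}) to force the intersection form on $\divplus$ to be non-positive, and then to extract the stated dichotomy from the ``moreover'' clause of Theorem \ref{hodge}. Specifically, by Lemma \ref{nonexpansion} every eigenvalue of $f_X^*|_{\divplus}$ has magnitude at most $1$, while the hypothesis $\lambda_1>1$ forces the spectral radius $r_1$ of $f_X^*$ on $H^{1,1}(X)$ to satisfy $r_1\geq\lambda_1>1$; hence the iterates $(f_X^*)^n[D]$ of any $D\in\divplus$ grow at most polynomially and $r_1^{-n}(f_X^*)^n[D]\to 0$. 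Proposition \ref{perp} then yields $[D]\cdot\theta_*=0$ for every $D\in\divplus$, where $\theta_*$ is the nef Perron class with $f_*\theta_*=r_1\theta_*$. Applying Theorem \ref{hodge} with the nontrivial nef class $\theta_*$ and the effective divisor $D^+$ (whose irreducible components span $\divplus$) then yields $(D')^2\leq 0$ for all $D'\in\divplus$, together with the stated dichotomy: either $(D')^2<0$ for every nonzero $D'\in\divplus$, or there exists an effective $D_*\in\divplus$ whose Chern class is a positive multiple of $\theta_*$.

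For the negative-definite case, the easier implication is immediate: if a modification $\pi:X\to\check X$ has $-\div\eta_{\check X}$ effective, then $\pi$ must contract every component of $D^+$ to a point (otherwise its strict transform would be a zero of $\eta_{\check X}$), and the exceptional divisor of a modification always has negative-definite intersection form. I expect the converse, namely that negative-definiteness of $\divplus$ actually permits one to contract $D^+$ down to a smooth surface, to be the main obstacle. My plan for it is an iterative blowdown: at each stage, use adjunction together with the constraint $\ord(C,\eta_X)\geq 1$ to locate a $(-1)$-curve among the remaining components of $D^+$, contract it, and verify that negative-definiteness persists on the strict transform. The delicate combinatorial input is to guarantee at every stage that such a contractible curve exists; this is where the fact that $D^+$ is the zero divisor of a two-form (rather than an arbitrary negative-definite configuration) must come into play.

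In the non-definite case, most listed properties of $D_*$ unwind from Theorem \ref{hodge} and the first paragraph: nefness is inherited from $\theta_*$, the identity $D_*\cdot D=0$ for all $D\in\divplus$ follows since $[D_*]$ is a positive multiple of $\theta_*$, and in particular $D_*^2=0$. By Lemma \ref{nonexpansion}, $f^*D_*$ lies in $\divplus$, so $(f^*D_*)^2\leq 0$ by the first paragraph, while Theorem \ref{pushpull} gives $(f^*D_*)^2=\lambda_2 D_*^2+D_*\cdot E^-(D_*)=D_*\cdot E^-(D_*)\geq 0$; hence both quantities vanish and $E^-(D_*)=0$. Combined with $\theta_*\cdot f^*D_*=f_*\theta_*\cdot D_*=0$, the uniqueness clause of Theorem \ref{hodge} then forces $f^*D_*\sim kD_*$ for some positive rational $k$, and applying $f_*$ together with $f_*f^*D_*=\lambda_2 D_*$ and $f_*D_*\sim r_1 D_*$ yields $kr_1=\lambda_2$. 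To push through $k=1$ (and hence $\lambda_1=\lambda_2=r_1$, $f^*D_*\sim D_*$, and $f_*D_*\sim\lambda_1 D_*$), I would exploit that the nef square-zero class $[D_*]$ defines an $f$-invariant fibration $X\to C$ whose analysis together with the invariance of $\eta$ should pin down the dynamical degrees. Uniqueness of $D_*$ up to positive rational multiple then follows from the final clause of Theorem \ref{hodge}.
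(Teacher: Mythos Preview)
Your first two paragraphs are essentially the paper's argument, and your plan for the negative-definite direction is correct in outline: the paper's specific computation is $D^+\cdot K_X = (D^+)^2 - D^+\cdot D^- < 0$, which forces some irreducible component $C$ of $D^+$ to satisfy $C\cdot K_X<0$; combined with $C^2<0$ this makes $C$ a $(-1)$-curve, and after contracting, the intersection form on the strict transform of $D^+$ remains negative definite because $\sigma^*$ is an isometry onto a subspace of $\divplus$.

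The genuine gap is your route to $k=1$ in the non-definite case. You propose to invoke an $f$-invariant fibration defined by the nef square-zero class $[D_*]$, but nothing in the setup guarantees that $D_*$ moves in a pencil; indeed the paper remarks immediately after the proof that it is \emph{unknown} whether some multiple of $D_*$ is a fiber of an $f$-invariant fibration. So this step cannot be completed as written. The paper's fix is much more elementary: since $f^*$ preserves nef classes and $\divplus$, the divisor $(f^n)^*D_*$ is for every $n$ a nontrivial nef effective element of $\divplus$, hence by Theorem~\ref{hodge} lies in the ray of $D_*$, say $(f^n)^*D_*\sim s_n D_*$ with $s_n>0$. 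Integrality of Chern classes (take $D_*$ primitive) forces each $s_n$ to be a positive \emph{integer}, while Lemma~\ref{nonexpansion} applied to $f^n$ forces $s_n\leq 1$; hence $s_n=1$ for all $n$.

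There is a second, related gap: even granting $k=1$ (i.e.\ $r_1=\lambda_2$), your single-iterate argument only gives $\lambda_1\leq r_1=\lambda_2$, not equality. The paper closes this by using \emph{all} iterates: from $s_n=1$ and Theorem~\ref{pushpull} one gets $(f^n)_*D_* = \lambda_2^n D_*$ for every $n$ (your computation of $E^-(D_*)=0$ works verbatim for $f^n$), and then $\lambda_1 \geq \lim_n \norm{(f^n)_*D_*}^{1/n} = \lambda_2$ by the very definition of $\lambda_1$.
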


\begin{proof}  Let $r_1\geq \lambda_1 > 1$ be the eigenvalue of largest magnitude for $f^*$, and choose a non-trivial nef class $\theta_*\in H^{1,1}(X)$ satisfying $f_*\theta_* = r_1\theta_*$.  Lemma \ref{nonexpansion} implies that for each $D\in\divplus$, we have $r_1^{-n}(f^*)^{n}D \to 0$ for all $D\in\divplus$.  From Proposition \ref{perp} we infer $D\cdot \theta_* = 0$.  So Theorem \ref{hodge} tells us that the intersection form on $\divplus$ is non-positive.

Suppose it is actually negative definite. If $D^+$ is non-trivial then by hypothesis
$$
D^+ \cdot K_X = (D^+)^2 - D^+\cdot D^- \leq (D^+)^2 < 0.
$$
Consequently some irreducible component $C$ of $D^+$ satisfies $C\cdot K_X < 0$.  Since $C^2<0$, too, we obtain that $C$ is a smooth rational curve with self-intersection $-1$, and there is a point blowup $\sigma:X\to\check X$ contracting it.  Now $\sigma_*D^+$ is the zero divisor for $\eta_{\check X}$ and $\sigma^*\div(\sigma_* D^+) \subset \divplus$.  Since $\sigma^*$ preserves intersections, it follows that the intersection form remains negative on $\div(\sigma_*D^+)$.  So we can repeat our argument until all components of $D^+$ are contracted.  I.e. we arrive at a modification $\pi:X\to\check X$ such that $\div \eta_{\check X} \leq 0$.  Conversely, if such a modification exists, it \emph{must} contract all components of $D^+$, so Grauert's criterion says that the intersection form is negative definite on $\divplus$.

Now suppose the intersection form is not definite on $\divplus$.  Then Theorem \ref{hodge} tells us that after replacing $\theta_*$ with a suitable positive multiple, there is an effective divisor $D_*\in\divplus$, unique up to linear equivalence, with Chern class equal to $\theta_*$.  Necessarily $D_*^2 = D_*\cdot D = 0$ for all $D\in\divplus$, and $f_*D_* \sim r_1D_*$.

Pullback preserves non-trivial nef classes, and Lemma \ref{nonexpansion} applies to any iterate of $f$, so for all $n\in\N$, we have that $(f^n)^*D_*$ is a non-trivial nef and effective divisor in $\divplus$.  It follows that $(f^n)^*D_* \sim s_nD_*$ for some $s_n > 0$.  Lemma \ref{nonexpansion} and integrality of Chern classes imply more specifically that $s_n=1$ for all $n$.

Applying Theorem \ref{pushpull} to $D_*$ and $f^n$ then gives
$$
(f^n)_* D_* \sim (f^n)_*(f^n)^* D_* = \lambda_2^n D_* + E_n^-(D_*),
$$
where $E_n^-(D_*)$ is an effective divisor.  We intersect both sides with $D_*$ and use $D_*^2 = 0$ to further obtain
$$
0 = (f^n)^*D_*\cdot D_* = D_*\cdot (f^n)_* D_* = E_n^-(D_*)\cdot D_*,
$$
so Theorem \ref{pushpull} tells us that $E_n^-(D_*) = 0$.  That is, $(f^n)_* D_* = \lambda_2^n D_*$ for all $n\in\N$.   Taking $n=1$, we see that $\lambda_1\leq r_1 = \lambda_2$.  Letting $n\to\infty$ and choosing a norm on $H^{1,1}(X)$ we also infer
$$
\lambda_1 \geq \lim_{n\to\infty} \norm{(f^n)_* D_*}^{1/n} = \lambda_2.
$$
Hence $\lambda_1 = r_1 = \lambda_2$.
\end{proof}

The proof of Corollary \ref{dichotomy} raises the possibility that some multiple of $D_*$ is a fiber of an $f$-invariant fibration of $X$, but we do not know whether this is actually true in general.  

\section{Elaboration}
\label{elaboration}

For the rest of this article, $\eta$ will be a zero-free two-form on a surface $S$, i.e. $-\div\eta$ is effective.  We will again have reason to pass from $S$ to various modifications $\pi_X:X\to S$, but now we will be mostly concerned with modifications for which $\eta_X:=\pi_X^*\eta$ is also zero-free.  Recall that $\pi_X$ is an \emph{elaboration} of $\eta$ if $\eta_X$ is zero-free.

\begin{defn}
An elaboration $\pi_X$ of $\eta$ is
\begin{itemize}
 \item \emph{strong} if every curve contracted by $\pi_X$ is a pole of $\eta_X$;
 \item \emph{peripheral} if no curve contracted by $\pi_X$ is a pole of $\eta_X$;
 \item a \emph{(further) elaboration of $\eta_Y$} if $\pi_X = \pi_Y\circ\pi$ factors through an elaboration $\pi_Y$ of $\eta$.
\end{itemize}
In the opposite direction, $\pi:X\to Y$ is an \emph{anti-elaboration} of $\eta_Y$ if all curves contracted by $\pi$ are zeros of $\eta_X$.
\end{defn}

If $\pi_X$ elaborates $\eta$, then we call a point $p_X\in X$ \emph{simple} for $\eta_X$ if $\ord(p_X,\eta_X) = -1$ and \emph{multiple} if $\ord(p_X,\eta_X)<-1$.  Note that we now use subscripting for points and curves, too.  The reason for this will become clear shortly.  For any modification $\pi:Y\to X$ and any irreducible $C_X\subset X$, we have $\ord(C_X,\eta_X) = \ord(C_Y,\eta_Y)$, where $C_Y\subset Y$ is the strict transform of $C_X$ by $\pi$.  Hence

\begin{prop}
\label{composition}
A composition of two modifications $\pi:Y\to X$, $\pi_X:X\to S$ elaborates $\eta$ if and only if $\pi_X$ elaborates $\eta$ and $\pi$ elaborates $\eta_X$.  The same is true for strong and peripheral elaborations.
\end{prop}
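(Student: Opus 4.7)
The plan is to reduce both equivalences to two facts already in hand: the pullback factorization $(\pi_X\circ\pi)^*\eta = \pi^*\eta_X = \eta_Y$, and the order identity $\ord(C_X,\eta_X)=\ord(C_Y,\eta_Y)$ for any irreducible $C_X\subset X$ and its strict transform $C_Y\subset Y$ under $\pi$, noted just before the statement.

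For the basic elaboration equivalence, one direction is immediate from the pullback factorization: if $\eta_X$ is zero-free and $\pi^*\eta_X$ is zero-free, then $\eta_Y$ is zero-free. For the converse I would take an arbitrary irreducible $C_X\subset X$ with strict transform $C_Y\subset Y$, apply the order identity, and use zero-freeness of $\eta_Y$ to force $\ord(C_X,\eta_X) = \ord(C_Y,\eta_Y) \le 0$; this shows that $\pi_X$ elaborates $\eta$, after which $\pi$ elaborates $\eta_X$ tautologically, since $\eta_Y = \pi^*\eta_X$ is zero-free.

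For the strong and peripheral versions the key step is to classify the irreducible curves $C_Y \subset Y$ contracted by $\pi_X\circ\pi$. Since $\pi$ is birational, there are only two possibilities: either $\pi$ contracts $C_Y$ directly, or $\pi(C_Y)=C_X$ is an irreducible curve in $X$ contracted by $\pi_X$, in which case $C_Y$ is necessarily the strict transform of $C_X$ (no exceptional component of $\pi$ can map onto a curve). Given this dichotomy, I would check each equivalence by translating ``contracted'' back and forth between $Y$ and $X$ via the classification, and translating ``pole'' back and forth via the order identity: in the first case $C_Y$ is a pole of $\eta_Y$ iff $\pi$ is strong/peripheral at $C_Y$ over $\eta_X$; in the second case $C_Y$ is a pole of $\eta_Y$ iff $C_X$ is a pole of $\eta_X$.

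The only real obstacle is bookkeeping, namely making sure that every curve we label as a strict transform is not simultaneously contracted, so that the order identity genuinely applies; this is automatic from the definition of strict transform together with $\pi$ being a modification.
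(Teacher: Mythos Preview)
Your proposal is correct and matches the paper's approach exactly: the paper gives no separate proof at all, simply writing ``Hence'' after the order identity $\ord(C_X,\eta_X)=\ord(C_Y,\eta_Y)$ and stating the proposition. Your argument just spells out the details the paper leaves implicit, and the dichotomy you use for the strong/peripheral cases is precisely what is needed.
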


Other basic properties of elaborations proceed inductively from the case of a point blowup.

\begin{prop}
\label{blowup}
Suppose $\pi_X$ elaborates $\eta$, and $\sigma:\hat X \to X$ is the blowup of $X$ at $p_X\in X$.   Then $\sigma$ elaborates $\eta_X$ if and only if $p_X\in\supp\div\eta_X$.  The elaboration is peripheral if $p_X$ is simple for $\eta_X$ and strong if $p_X$ is multiple.
\end{prop}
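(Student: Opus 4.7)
The plan is to reduce the entire proposition to a single local computation of the order of $\sigma^*\eta_X$ along the exceptional divisor $E$ of $\sigma$, using the Hurwitz-type identity \eqref{hurwitz}. Any irreducible curve in $\hat X$ is either $E$ or the strict transform $\tilde C$ of some irreducible curve $C\subset X$, so checking whether $\sigma$ elaborates $\eta_X$ amounts to checking that the order of $\sigma^*\eta_X$ along each of these is $\leq 0$.

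First, I would dispatch the strict transforms. For any irreducible $C\subset X$ with strict transform $\tilde C$, the standard behavior of pullback through a point blowup gives $\ord(\tilde C,\sigma^*\eta_X) = \ord(C,\eta_X)$, which is already $\leq 0$ by the hypothesis that $\pi_X$ is an elaboration of $\eta$. So strict transforms never contribute zeros, and all the content of the proposition is concentrated at $E$.

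Next, compute $\ord(E,\sigma^*\eta_X)$. Since $\sigma$ is a single point blowup, the critical divisor of $\sigma$ equals $E$ with multiplicity one. Formula \eqref{hurwitz} then reads $\div \sigma^*\eta_X = \sigma^*\div\eta_X + E$. Using that the multiplicity of a pulled-back divisor along the exceptional curve equals the multiplicity of the original divisor at the blown-up point, one gets $\ord(E,\sigma^*\div\eta_X) = \ord(p_X,\eta_X)$, whence
$$\ord(E,\sigma^*\eta_X) \;=\; \ord(p_X,\eta_X) + 1.$$

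Finally, I would read off the three claims. Because $\eta_X$ is zero-free, the condition $p_X\in\supp\div\eta_X$ is the same as $\ord(p_X,\eta_X)\leq -1$. Thus $\sigma^*\eta_X$ has no zero along $E$ iff $\ord(p_X,\eta_X)+1\leq 0$ iff $p_X\in\supp\div\eta_X$, which is the first assertion. Assuming $\sigma$ elaborates $\eta_X$, the only curve contracted by $\sigma$ is $E$, and $E$ is a pole of $\eta_{\hat X}$ precisely when $\ord(E,\sigma^*\eta_X)<0$, i.e.\ when $\ord(p_X,\eta_X)<-1$, i.e.\ when $p_X$ is multiple; this gives the strong case. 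In the remaining situation $\ord(p_X,\eta_X)=-1$, the order along $E$ is zero, so $E$ is neither zero nor pole and $\sigma$ is peripheral, giving the simple case. There is no real obstacle here: the proposition is a bookkeeping exercise, and the only care required is in keeping straight the conventions (pole versus zero, simple versus multiple, elaboration versus strong versus peripheral) when converting the single numerical identity for $\ord(E,\sigma^*\eta_X)$ into the three stated equivalences.
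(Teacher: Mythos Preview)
Your proof is correct and follows exactly the same approach as the paper. The paper's own proof is a single sentence noting that $\crit(\sigma)=E_{\hat X}$ gives $\ord(E_{\hat X},\eta_{\hat X})=\ord(p_X,\eta_X)+1$ and that all conclusions follow; you have simply spelled out the details of how they follow.
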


\begin{proof}
Let $E_{\hat X}$ be the curve contracted by $\sigma$.  All conclusions proceed directly from $\crit(\pi) = E_{\hat X}$, i.e. $\ord(E_{\hat X},\eta_{\hat X}) = \ord(p_X,\eta_X) + 1$, .
\end{proof}

\begin{cor}
\label{blowupcor}
Let $\pi_X$ be an elaboration of $\eta$ and $\pi:Y\to X$ be a modification.  Suppose $C_Y\subset Y$ is an irreducible curve contracted by $\pi$.
\begin{itemize}
 \item If $\pi$ is an anti-elaboration, then $\pi(C_Y)\notin\supp\div\eta_X$.
 \item If $\pi$ is an elaboration, then $\pi(C_Y)\in \supp\div\eta_X$.
 \item If $\pi$ is a peripheral elaboration, then $\pi(C_Y)$ is a simple point for $\eta_X$.
 \item If $\pi$ is a strong elaboration, then $\pi(C_Y)$ is a multiple point for $\eta_X$.
\end{itemize}
\end{cor}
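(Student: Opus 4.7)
The plan is to write $\pi$ as a composition of point blowups $\pi=\tau_1\circ\cdots\circ\tau_n$, with $\tau_i:X_i\to X_{i-1}$ blowing up a point $q_{i-1}\in X_{i-1}$, and to handle each of the four bullets by induction on $n$. Any $\pi$-contracted irreducible curve $C_Y$ is, after pushing down as far as possible while it remains a curve, the strict transform of the exceptional divisor $E_k$ of some $\tau_k$, so $\pi(C_Y)$ equals $(\tau_1\circ\cdots\circ\tau_{k-1})(q_{k-1})$.

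For the three elaboration bullets I would split $\pi=\tau_1\circ\pi'$ and apply Proposition \ref{composition}: since $\pi$ is an elaboration (respectively peripheral or strong), so are both $\tau_1$ and $\pi'$ with respect to the appropriate pulled-back forms. Given a contracted curve $C_Y$, either $\pi'(C_Y)=E_1$, in which case $\pi(C_Y)=q_0$ and Proposition \ref{blowup} applied to $\tau_1$ gives the desired classification, or $\pi'(C_Y)=p_1$ is a point. In the latter case the inductive hypothesis, applied to $\pi':Y\to X_1$, says that $p_1$ is of the claimed type (in the support, simple, or multiple) with respect to $\eta_{X_1}$. Pushing forward by $\tau_1$ then splits into two sub-cases: if $p_1\notin E_1$, then $\tau_1$ is a local isomorphism near $p_1$ and $\ord(\tau_1(p_1),\eta_X)=\ord(p_1,\eta_{X_1})$, preserving the type; if $p_1\in E_1$, then $\tau_1(p_1)=q_0$ and Proposition \ref{blowup} again controls the type of $q_0$ directly from the elaboration/peripheral/strong property of $\tau_1$.

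The anti-elaboration case is the main obstacle, because Proposition \ref{composition} is stated only for elaborations and a factor of an anti-elaboration need not itself be an anti-elaboration. I would instead argue by contradiction. Suppose $p:=\pi(C_Y)\in\supp\div\eta_X$, so $\ord(p,\eta_X)\le -1$. Let $k$ be the smallest index such that the center $q_{k-1}$ of $\tau_k$ lies in the preimage of $p$ under $\tau_1\circ\cdots\circ\tau_{k-1}$; such a $k$ exists because $\pi^{-1}(p)$ has positive dimension, containing $C_Y$. For $i<k$ no blowup touches the (unique) preimage of $p$, so $\tau_1\circ\cdots\circ\tau_{k-1}$ is a local isomorphism there and $\ord(q_{k-1},\eta_{X_{k-1}})=\ord(p,\eta_X)\le -1$. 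Proposition \ref{blowup} applied to $\tau_k$ then yields $\ord(E_k,\eta_{X_k})\le 0$. Since a strict transform of an existing curve under further blowups retains the same $\ord$ in $\div\eta$, the strict transform of $E_k$ in $Y$ is a $\pi$-contracted curve of non-positive order in $\eta_Y$, contradicting the hypothesis that every such curve is a zero of $\eta_Y$.
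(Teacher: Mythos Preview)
Your argument is correct and is essentially the induction on the number of blowups that the paper leaves implicit (the corollary is stated without proof, as an immediate consequence of Proposition~\ref{blowup}).

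One small remark on your anti-elaboration discussion: your stated worry that ``a factor of an anti-elaboration need not itself be an anti-elaboration'' is actually unfounded---if $\pi=\tau_1\circ\pi'$ is an anti-elaboration, then every $\pi'$-contracted curve is $\pi$-contracted and hence a zero of $\eta_Y$, and the strict transform of $E_1$ in $Y$ is likewise a zero, so both $\tau_1$ and $\pi'$ are anti-elaborations. The genuine obstruction to straightforward induction is rather that the intermediate surface $X_1$ is no longer an elaboration of $\eta$ (since $E_1$ is a \emph{zero} of $\eta_{X_1}$), so the corollary's standing hypothesis on $\pi_X$ fails for $\pi'$. Your contradiction argument sidesteps this cleanly; alternatively one could run the same induction after replacing the conclusion ``$\pi(C_Y)\notin\supp\div\eta_X$'' by the slightly more robust ``$\ord(\pi(C_Y),\eta_X)\geq 0$'', which makes sense on any intermediate $X_i$.
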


\begin{cor}
\label{decomps}
Let $\pi_X$ be an elaboration of $\eta$.
\begin{itemize}
 \item Any further modification $\pi:Y\to X$ decomposes uniquely $\pi=\pi_{elab}\circ \pi_{anti}$ into an elaboration and an anti-elaboration.  The zeros of $\eta_Y$ are disjoint from its poles.
 \item Any further elaboration $\pi:Y\to X$ decomposes $\pi=\pi_{str}\circ\pi_{per}$ uniquely into strong and peripheral elaborations.
 \item Any peripheral elaboration $\pi:Y\to X$ decomposes $\pi = \pi_1\circ\dots\circ\pi_k$ into peripheral elaborations whose critical divisors $\crit(\pi_j)$ have mutually disjoint and connected supports, each equal to a chain of smooth rational curves that meets $\supp\div \eta_Y$ in a single simple point $p_j$ whose image $\pi(p_j) = \pi(\crit(\pi_j))$ is simple for $\eta_X$.
\end{itemize}
 \end{cor}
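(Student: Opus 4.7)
The plan is to analyze each modification $\pi:Y\to X$ as a sequence of point blowups and track, via Proposition \ref{blowup}, how the decomposition $\div\eta = D^+ - D^-$ into zeros and poles evolves. The backbone is a disjointness lemma, proved by induction on blowups starting from the elaboration $\pi_X$ (where $D^+_X=0$): at every intermediate surface $Y^{(k)}$ in the sequence, $\supp D^+_{Y^{(k)}}$ and $\supp D^-_{Y^{(k)}}$ remain disjoint, because a blowup at a point $p$ of one support creates a new exceptional curve of the same sign sitting in the fiber over $p$, and the strict transforms of components of the other support avoid this fiber since $p$ does not lie on them. This immediately yields the final assertion of (1). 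For the factorization in (1), I would classify each blowup step as \textit{elab-type} if its center lies in the current $\supp D^-$ and \textit{anti-elab-type} otherwise; by Proposition \ref{blowup} the former creates no zero, while the latter creates a zero exceptional curve. Disjointness lets me commute any consecutive ``anti-elab then elab'' pair, since the subsequent elab center lies in $\supp D^-$ and hence off the freshly created zero curve. Reordering all elab steps to the front produces $X\to Z\to Y$ with $Z\to X$ an elaboration and $Y\to Z$ an anti-elaboration; uniqueness follows because $Z$ is forced to be $Y$ with $\supp D^+_Y$ contracted.

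For (2), every blowup of an elaboration has its center in $\supp D^-$ and is either \textit{simple} ($\ord(p,D^-)=1$, producing a regular exceptional curve) or \textit{multiple} ($\ord(p,D^-)\ge 2$, producing a pole exceptional curve). A consecutive ``simple then multiple'' pair can be commuted: if the multiple center lay on the regular exceptional $E$ just produced, then only the strict transform of the unique simple pole through the preceding center could meet $E$, at a single point with multiplicity one; that would force $\ord \le 1$ at the center, contradicting ``multiple''. Gathering multiples to the front yields $X\to Z\to Y$ with $Z\to X$ strong and $Y\to Z$ peripheral, and $Z$ is uniquely determined as $Y$ with the regular exceptional curves of $\pi$ contracted.

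For (3), every blowup of a peripheral elaboration is at a simple pole point and produces a regular exceptional curve. I would prove by induction on blowups that the exceptional divisor of $\pi$, united with the strict transforms of the pole components that meet it, forms a disjoint union of chains of smooth rational curves, each chain anchored at one pole component. In the inductive step, a new simple blowup occurs either at a point on a pole component away from every existing exceptional curve (starting a fresh chain attached to that pole) or at the unique simple intersection of a pole component with the end of an existing chain (lengthening the chain by one link); no other possibility arises, since any interior intersection of two regular exceptional curves has $\eta$-order zero and is therefore not a valid simple-point blowup center. Contracting one chain at a time yields $\pi = \pi_1\circ\dots\circ\pi_k$: each $\crit(\pi_j)$ is the corresponding chain (with the multiplicities dictated by $\pi_j$), its $\pi$-image is a single point of $X$ because each chain is a connected piece of $\exc(\pi)$, and that point is simple in $\eta_X$ by Proposition \ref{blowup} since the chain originated from an initial simple-point blowup.

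The main obstacle is the chain-structure claim in (3): ruling out branching requires careful bookkeeping of which points are simple for $\eta$ at each intermediate stage, and verifying that these occur only on pole components or at the unique pole-exceptional intersection at the end of an existing chain. Once this is in place, (1) and (2) follow from routine commutation arguments made possible by the disjointness lemma.
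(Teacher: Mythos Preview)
Your proposal is correct and follows essentially the same approach as the paper's proof: both argue by induction on the number of blowups comprising $\pi$, using Proposition \ref{blowup} to track orders and commuting blowups whose centers lie over different points. The paper's proof is much terser---it dispatches (1) and (3) in a single sentence about commuting blowups over distinct base points and gives only a short paragraph for (2)---whereas you spell out the disjointness lemma, the pairwise commutation arguments, and especially the inductive verification of the chain structure in (3), all of which the paper leaves implicit.
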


\begin{proof}
All conclusions follow from Proposition \ref{blowup} and induction on the number of blowups comprising $\pi$.  For the first and third conclusions it suffices to note that we may switch the order of two blowups if they are centered over different points in $X$.

To prove the the second assertion, we write $\pi = \pi'\circ\sigma$, where $\pi' =\pi'_{anti}\circ\pi'_{per}:Z\to X$ is an elaboration, and $\sigma:Y\to Z$ is the blowup of a point $p_Z\in \supp\div\eta_Z$.  Let $E_Y$ be the curve contracted by $\sigma$.  If $E_Y$ is not a pole for $\eta_Y$, then we may take $\pi_{str} = \pi'_{str}$ and $\pi_{per} = \pi'_{per}\circ\sigma$.  Otherwise $\pi'_{per}\circ \sigma(E_Y)$ is a multiple point $p_X$ of $\eta_X$ and therefore not the image of any curve collapsed by $\pi'_{per}$.  So we may switch the order of $\pi'_{per}$ and $\sigma$ and take $\pi_{per} = \pi'_{per}$, $\pi_{str} = \pi'_{str}\circ \sigma$, where $\sigma$ is the blowup of $p_X$.
\end{proof}

\subsection{$\eta$-Primes and $\eta$-points}
It will be useful for us to coordinate information between different elaborations of $\eta$, so we adopt an idea (see e.g. \cite[\S V.35]{Man}) which was introduced into complex dynamics in the recent papers \cite{Ca1, BFJ} (see also \cite{Us}).  Our focus, however, will be on points and sets rather than cohomology classes.

If $\pi_X,\pi_Y$ both elaborate $\eta$, then there is always a common further elaboration $Z\to X,Y$ of both $\eta_X$ and $\eta_Y$.  We declare two curves $C_X\subset X$ and $C_Y\subset Y$ to be equivalent if they have the same strict transform in $Z$.   Equivalence does not depend on choice of $Z$.

\begin{defn}  Let $\pi_X$ be an elaboration of $\eta$.  An \emph{$\eta$-prime $C$ appearing in $X$} is the equivalence class of an irreducible curve $C_X\subset X$.  We call $C_X$ the \emph{incarnation} of $C$ in $X$.
\end{defn}

An $\eta$-prime $C$ may be regarded as a `divisorial' valuation $v_C$ on the function field $K(X)$.  That is, if $R\in K(X)$ is a rational function, then $v_C(R)$ is the order of vanishing of $R\circ\pi_X$ along some/any incarnation $C_X\subset X$.  This point of view is thoroughly developed in \cite{FaJo3} (see also \cite{Jon}).

Since $\eta$ is fixed, we will generally say `prime' instead of `$\eta$-prime'.  We set $\ord(C,\eta) := \ord(C_X,\eta_X)$, the right side being the same for any incarnation $C_X$ of $C$.  In particular $C$ is a \emph{(simple) pole} of $\eta$ if $C_X$ is a (simple) pole of $\eta_X$.  We let $\primes$ denote the set of all primes associated to $\eta$, $\poles\subset\primes$ the set of all poles, and $\simples\subset\poles$ the set of all simple poles.  In Proposition \ref{residue} below we will take advantage of the fact $\eta$ canonically induces a meromorphic one form (the Poincar\'e residue) on each of its simple poles.

If $C\in\primes$ does not appear in $X$, there exists a further elaboration $\pi:Y\to X$ incarnating $C$ and $\pi(C_Y)$ is a point in $X$ that is independent of $\pi$.  We refer to both the curve $C_Y$ and the point $\pi(C_Y)\in X$ as representatives of $C$.   Since elaboration makes curves less singular, there is a further elaboration $Z\to Y$ such that $C_Z$ is minimally singular; i.e. the number of singularities of $C_Z$ counted with multiplicity is minimal among all incarnations of $C$.  This is only an issue if $C$ appears in $S$, since otherwise all incarnations of $C$ are smooth.  A minimally singular incarnation of a pole of $\eta$ is always smooth.  Since finitely many poles of $\eta$ appear in $S$, any elaboration is dominated by a further elaboration in which all poles that appear do so smoothly.

Any two minimally singular incarnations $C_Z, C_{Z'}$ of $C$ are biholomorphic via the restriction of $\pi_{Z'}^{-1}\circ \pi_Z$ to $C_Z$, and this gives us an equivalence between points in $C_Z$ and $C_{Z'}$.  That is, we declare pairs $(p_Z,C_Z)$ and $(p_{Z'},C_{Z'})$ to be equivalent if  $p_Z\in C_Z$,  $p_{Z'}\in C_{Z'}$ and $p_{Z'} = (\pi_{Z'}^{-1}\circ \pi_Z)|_{C_Z}(p_Z)$.  This equivalence relation gives us a model-independent notion of \emph{points} in the support of the divisor of $\eta$

\begin{defn} An \emph{$\eta$-point} $p\in C\in\poles$ appearing in an elaboration $X$ of $\eta$ is the equivalence class of a pair $(p_X,C_X)$ where $C_X\subset X$ is a smooth incarnation of $C$ and $p_X\in C_X$.  We call $p_X$ the incarnation of $p$ in $X$.
\end{defn}

In the language of \cite{FaJo3} an $\eta$-point in $C$ is a tangent vector to the divisorial valuation associated to $C$. 

Let $\polepts := \bigcup_{C\in\poles} \{p\in C\}$ denote the set of all $\eta$-points contained in any pole of $\eta$.  As with primes, an $\eta$-point $p\in C$ is uniquely represented by some $p_X\in X$ even if $X$ does not incarnate $C$ smoothly.  If $C$ does not appear in $X$ then all $p\in C$ have the same representative in $X$.   We will say that an elaboration takes place over $p\in\polepts$ (resp, over $S\subset \polepts$) if $\pi$ decomposes as a sequence of blowups centered at points representing $p$ (resp, points representing elements of $S$).  Corollary \ref{blowupcor} says that any elaboration takes place over a finite subset of $\polepts$.

The order $\ord(p_X,\eta_X)$ of an incarnation of $p\in\polepts$ depends in general on the elaboration $\pi_X$.  However, whether $p_X$ is simple or multiple depends only on $p$, so we can single out the subset $\simplepolepts\subset\polepts$ of \emph{simple} $\eta$-points, all of which lie in simple poles of $\eta$.  A point $p\in\simplepolepts$ appears in $X$ if and only if its representative $p_X$ is simple for $\eta_X$.

As the reader will verify, simple/multiple points in a simple pole for $\eta$ may alternately be viewed as regular points/nodes for $\div\eta$:

\begin{prop}
If $C_X$ smoothly incarnates $C\in\simples$, then the representatives of the multiple points in $C$ are exactly those points where $C_X$ meets other components of $\div\eta_X$.  After a further strong elaboration such an intersection is a normal crossing between $C_X$ and one other pole of $\eta_X$.
\end{prop}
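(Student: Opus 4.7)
The proposition has two parts: first, that the multiple points on $C$ are exactly the intersection points of $C_X$ with other components of $\div\eta_X$; second, that each such intersection can be resolved into a transversal two-fold crossing via a further strong elaboration.

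For the first part, let $p \in C$ have representative $p_X \in C_X$. The additive formula from Section \ref{background} gives
\[
\ord(p_X,\eta_X) = \sum_{C' \ni p_X} \mu(p_X,C')\,\ord(C',\eta_X).
\]
Since $C_X$ is smooth at $p_X$ and $C \in \simples$, the $C_X$ term contributes exactly $-1$. Every other term is at most $-1$: $\eta$ is zero-free, so every component of $\div\eta_X$ is a pole with $\ord(C',\eta_X) \leq -1$, and $\mu(p_X,C') \geq 1$ whenever $C' \ni p_X$. Hence $\ord(p_X,\eta_X) < -1$---i.e.\ $p$ is multiple---precisely when some component of $\div\eta_X$ other than $C_X$ passes through $p_X$.

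For the second part, given a multiple point $p$, I will iteratively blow up the current representative of $p$. By the first part the representative is always multiple, so each such blowup is a strong elaboration by Proposition \ref{blowup}. After a blowup $\sigma:\hat X \to X$, the representative of $p$ in $\hat X$ is $\tilde p := \hat C_X \cap E$, with $\hat C_X$ the strict transform of $C_X$ (still a smooth incarnation of $C$) and $E$ the exceptional divisor (itself a pole of $\hat\eta_X$, since $p_X$ was multiple). The aim is that after finitely many iterations, the representative lies only on $\hat C_X$ and on the latest exceptional divisor, meeting transversally.

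For termination, I first interleave strong blowups that smooth out any singularities of components $C' \neq C_X$ passing through the current representative---such singular points are automatically multiple, since $\mu(p_X,C') \geq 2$ forces $\ord(p_X,\eta_X) \leq -3$---so that every $C'$ meeting the representative is smooth there. Then the standard blowup identity $(C_X \cdot C')_{p_X} = (\hat C_X \cdot \hat C')_{\tilde p} + \mu(p_X,C')$ specializes to $(\hat C_X \cdot \hat C')_{\tilde p} = (C_X \cdot C')_{p_X} - 1$, so components transversal to $C_X$ at $p_X$ vanish from $\tilde p$ while tangent components have their contact with $\hat C_X$ strictly reduced. Consequently the invariant
\[
M(p_X) := \max_{\substack{C' \neq C_X \\ C' \ni p_X}}(C_X \cdot C')_{p_X}
\]
strictly decreases at each blowup while $M \geq 2$; once $M = 1$, one further blowup separates the remaining transversal components from the representative, leaving only $\hat C_X$ and the latest exceptional divisor there, meeting transversally. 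The main technical obstacle is keeping track of $M$ across the interleaved preliminary smoothings of singularities of various $C'$, but this is handled uniformly by the standard embedded-resolution-of-singularities procedure for divisors on smooth surfaces, applied locally near the moving representative of $p$.
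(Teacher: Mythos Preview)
The paper does not actually prove this proposition; it is introduced with the phrase ``As the reader will verify'' and then simply stated.  So your proposal fills in details the authors omitted.

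Your argument for the first part is exactly the intended one: the formula $\ord(p_X,\eta_X) = \sum_{C'\ni p_X} \mu(p_X,C')\ord(C',\eta_X)$ together with smoothness of $C_X$, $\ord(C_X,\eta_X)=-1$, and zero-freeness of $\eta_X$ immediately characterizes multiple points as intersection points with other poles.

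Your second part is correct but slightly over-engineered.  The ``interleave'' discussion is confusing because the blowups that smooth a component $C'$ at the current representative are \emph{the same} blowups as the ones in your main sequence, not separate ones; and blowups at singular points of $C'$ away from the representative do not affect the local picture there.  The cleanest way to say what you want is just your final sentence: apply embedded resolution of the divisor $\div\eta_X$ locally near $p_X$.  Every blowup in that procedure is centered at a point where $\div\eta_X$ fails to be simple normal crossings, and such a point automatically satisfies $\ord(\cdot,\eta_X)\leq -2$, so each blowup is a strong elaboration by Proposition~\ref{blowup}.  After resolution, the representative of $p$ lies on $C_X$ and exactly one other (smooth, transverse) pole.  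Your invariant $M$ and the intersection-number computation are a correct way to see termination by hand, but once you invoke embedded resolution you do not need them.
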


Now we turn to non-polar primes.  The main theme here is that any non-polar prime which does not appear in $S$ may be described in terms of its relationship with polar primes.

\begin{defn}
\label{exceptdef}
A prime $E\in\primes\setminus \poles$ is \emph{exceptional} if it does not appear in $S$.
\end{defn}

Corollary \ref{decomps} tells us that any  exceptional prime $E$ originates from a simple point in the following sense.  Let $E_X\subset X$ be an incarnation of $E$ and $\pi_X = \pi_{str}\circ\pi_{per}$ be the peripheral/strong decomposition of $\pi_X$.  Then $\pi_{per}(E_X) = \pi_{per}(p_X)$ for a unique simple point $p_X$ of $\eta_X$ in the incarnation $C_X$ of some $C\in\simples$.  The $\eta$-point $p\in C$ represented by $p_X$ is independent of the elaboration $X$.   We call $p$ the \emph{origin} of $E$ and write $p = \orig(E)$.

Conversely, let $X$ be a strong elaboration of $\eta$ incarnating some simple point $p\in C$.  Then any given exceptional prime $E$ originating from $p$ is incarnated by a further peripheral elaboration $\pi:Y\to X$ over $p$.  The \emph{depth} of $E$ will be the minimal number of blowups comprising an elaboration $\pi$ that incarnates $E$ in this fashion.  The depth $n\geq 1$ and the origin $p$ uniquely determine $E$, so we write $E = E_n(p)$.  If $E_n(p)$ appears in $X$, then so does $p$ and $E_m(p)$ for all $1\leq m < n$.  

\subsection{Rational surfaces}

Let us now discuss elaborations, primes and points associated to $\eta$ in each of the three cases in Proposition {\tricotomy}. For the rest of this section, we take $S=\cp^2$.

{\it Smooth case:} if $-\div\eta$ is a smooth cubic, then $\eta$ has no multiple points.  Hence all elaborations of $\eta$ are peripheral by Proposition \ref{blowup}, and $\poles = \simples$ consists of a single element $C$ which is a one dimensional complex torus.  All $\eta$-points are simple.

{\it Toric case:}  The form $\eta = \frac{dx\wedge dy}{xy}$ has simple poles along three lines, and each of the three points where two poles meet is a double point.  Blowing up one of these three points gives a strong elaboration $\sigma$ of $\eta$, and $\sigma^*\eta$ then has four simple poles and four double points.  Working inductively, one sees that for any strong elaboration $\pi_X$ of $\eta$, the divisor $\div\eta_X$ is a cycle of simple poles, each meeting the next in a double point.
This gives us the following (see the appendix on toric surfaces).

\begin{prop}
A modification $\pi_X:X\to \cp^2$ of $\cp^2$ is a strong elaboration of $\frac{dx\wedge dy}{xy}$ if and only if it is toric.  Consequently, $\poles$ is countable and coincides with $\simples$, and for each $C\in\poles$ the set $C\cap\simplepolepts$ of simple points in $C$ is isomorphic to $\C^*$.
\end{prop}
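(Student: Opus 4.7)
The plan is to identify strong elaborations of $\eta=\frac{dx\wedge dy}{xy}$ with toric modifications of $\cp^2$ equipped with its standard toric structure, and then read off the two subsequent assertions from the toric picture. The inductive description recalled just above the statement --- that $\div\eta_X$ is always a cycle of simple poles meeting pairwise in double points --- will do most of the work.

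I would prove the equivalence by induction on the number of point blowups comprising $\pi_X$. The base case is that the divisor $\{xyz=0\}$ is the torus-invariant boundary of $\cp^2$, and its pairwise intersections are simultaneously the three torus-fixed points and the three multiple points of $\eta$. For the implication \emph{toric $\Rightarrow$ strong elaboration}, one decomposes a toric $\pi_X$ as a sequence of blowups at torus-fixed points; at each stage such a point is the transverse intersection of two adjacent invariant curves, which by induction are simple poles of the ambient form, hence a multiple point of $\eta_X$. Proposition~\ref{blowup} then makes each step a strong elaboration and Proposition~\ref{composition} assembles them. Conversely, a strong elaboration blows up only multiple points of $\eta_X$; in the inductively maintained cyclic configuration these are exactly the normal crossings between adjacent poles, i.e.\ the torus-fixed points of the (inductively) toric $X$. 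Blowing one up corresponds to the star subdivision of the fan and produces a new smooth projective toric surface.

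Granted the equivalence, each $C\in\poles$ is incarnated in some strong elaboration built by iterating blowups of multiple points over $\orig(C)$ until $C$ appears. Such an elaboration is toric, so $C$ is a torus-invariant curve; invariant curves on a smooth toric surface are always simple poles of the invariant form, so $C\in\simples$, giving $\poles=\simples$. Countability follows because smooth toric modifications of $\cp^2$ are in bijection with finite regular subdivisions of the standard fan of $\cp^2$ in $\Z^2$, a countable set, each contributing only finitely many new invariant curves.

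Finally, fix $C\in\poles$ and choose a toric $\pi_X$ smoothly incarnating $C$, so $C_X\cong\cp^1$. In the toric description $C_X$ meets the remaining invariant boundary transversally at exactly two double points and is otherwise a smooth locus of $\div\eta_X$; this identifies $C\cap\simplepolepts$ with $\cp^1$ minus two points, i.e.\ with $\C^*$. The main subtlety, and where I would direct the most care, is the model-independence of this description: one must check that no further strong elaboration over $C$ creates new multiple points on the strict transform of $C_X$. This holds because any such blowup is centered at an already-multiple point, so the new exceptional component meets $C_X$ only at the two original invariant intersections, leaving the $\C^*$-complement intact.
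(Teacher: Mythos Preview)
Your proposal is correct and follows the same inductive approach the paper sketches just before the statement (strong elaborations blow up double points, which are exactly the torus-fixed points, so the two notions coincide step by step) together with the facts from the appendix on toric surfaces. One terminological slip: $\orig(\cdot)$ is defined in the paper only for \emph{exceptional} (non-polar) primes, so in your third paragraph you should speak of the representative of $C$ in $\cp^2$ rather than $\orig(C)$; and in your final paragraph, blowing up a multiple point on $C_X$ replaces that point by a single new double point on the strict transform rather than leaving ``the two original invariant intersections'' literally intact, though the $\eta$-point equivalence identifies the old and new, so your conclusion that $C\cap\simplepolepts\cong\C^*$ is unaffected.
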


Indeed the appendix gives us a natural identification of $\poles$ with the set of rational rays in $\R^2$.

{\it Euclidean case: } The set $\poles$ (or rather something quite close to it) corresponding to $\eta=dx\wedge dy$ has been considered in detail by Favre and Jonsson \cite{FaJo1, FaJo2} from the point of view of valuations.  The idea here is that $\poles$ is naturally completed by the set $\val$ consisting of all normalized valuations that evaluate negatively on $\div\eta$, and the completion is a compact rooted metric tree.  The root is the prime represented in $\cp^2$ by the line at infinity. The set $\simples$ constitutes a dense set of endpoints in the tree.

Strong elaborations $\pi_X$ of $dx\wedge dy$ correspond (via the dual graph of $\crit\pi_X$ ) to finite subgraphs of $\val$ with vertices in $\poles$ that include the root.  Both $\simples$ and $\poles\setminus\simples$ are uncountable, and $\ord(C,\eta)$ is unbounded among $C\in\poles$.  All $C\in\poles$ are isomorphic to $\cp^1$, and if $C\in\simples$, then $C\cap\simplepolepts$ is isomorphic $\C$.

\section{Elaboration of rational maps}
\label{map elaboration}
Now let us consider a rational map $f: S\to S$ preserving the zero-free two-form $\eta$; i.e. $f^*\eta = \delta\eta$ for some constant $\delta\in\C^*$.  If $\pi_X:X\to S$, $\pi_Y:Y\to S$ are elaborations of $S$ then we say that $f_{XY}$ is an \emph{elaboration} of $f$.

\begin{prop}
\label{miscmapprop}
Suppose that $f_{XY}:X\to Y$ is an elaboration of $f$ and $C_X\subset X$ is an irreducible curve.
\begin{enumerate}
 \item If $f_{XY}$ does not contract $C_X$, then the prime incarnated by $f_{XY}(C_X)$, the local multiplicity of $f_{XY}$ along $C_X$, and the topological degree $\deg f_{XY}|_{C_X}$ of the restriction $f_{XY}|_{C_X}:C_X\to f(C_X)$ depend only on the prime incarnated by $C_X$.
 \item If $f_{XY}:X\to Y$ contracts $C_X$, then the image $f_{XY}(C_X)$ lies in $\supp\div\eta_Y$.  If in addition $C_X$ is polar, then $f_{XY}(C_X)$ is a multiple point for $\eta_Y$.
 \item There exists an elaboration $Z\to Y$ of the target such that $f_{XZ}$ does not contract $C_X$.
 \end{enumerate}
\end{prop}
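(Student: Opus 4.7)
We prove the three parts in the order (1), (3), (2), since (2) depends on (3). Part (1) is a birational invariance check. Suppose $C_X\subset X$ and $C_{X'}\subset X'$ both incarnate the same prime $C$; choose a common further elaboration $W\to X, X'$, so that $C_X$ and $C_{X'}$ share a strict transform $C_W\subset W$, and the modifications $W\to X$, $W\to X'$ are local isomorphisms at the generic point of $C_W$. Each of the three quantities is determined by the germ of $f_{XY}$ at a generic point of $C_X$ (where $f_{XY}$ is a morphism), so the values for $f_{WY}$ on $C_W$, for $f_{XY}$ on $C_X$, and for $f_{X'Y}$ on $C_{X'}$ coincide. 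A symmetric argument on the target side, using that any lift $f_{XY'}$ through a modification $Y'\to Y$ agrees with $f_{XY}$ under the local isomorphism $Y'\to Y$ at a generic point of $f_{XY}(C_X)$, gives invariance under replacing $Y$ by further elaborations. Thus all three data descend to attributes of the prime $C$.

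For (3), first produce a modification $Z_0\to Y$ on which $f_{XZ_0}$ fails to contract $C_X$, then replace it by an elaboration. The first step is the standard surface-theoretic un-contraction: at a generic point $p^*\in C_X\setminus I(f_{XY})$ choose coordinates $(u,v)$ with $C_X=\{v=0\}$ and coordinates on $Y$ centered at $q=f_{XY}(p^*)$, giving $f_{XY}=(v^{a_1}\tilde g_1,\,v^{a_2}\tilde g_2)$ with $a_i\geq 1$ and $\tilde g_i|_{v=0}\not\equiv 0$; blowing up $q$ and iterating over successive image points terminates in finitely many steps (via a decreasing invariant extracted from $(a_1,a_2)$ and the leading coefficients of the $\tilde g_i$) with a modification $Z_0\to Y$ on which the lifted map sends $C_X$ to a curve. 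Now decompose $Z_0\to Y=\pi_{\mathrm{elab}}\circ\pi_{\mathrm{anti}}$ via Corollary \ref{decomps} and let $Z$ denote the intermediate surface, so $Z\to Y$ is an elaboration. Proposition \ref{jformula} applied to $f_{XZ_0}$ with invariance $f_{XZ_0}^*\eta_{Z_0}=\delta\eta_X$ yields
$$\ord(C_X,\eta_X)+1=m(f_{XZ_0},C_X)\bigl(\ord(f_{XZ_0}(C_X),\eta_{Z_0})+1\bigr),$$
and since $\eta_X$ is zero-free the left side is $\leq 1$, forcing $\ord(f_{XZ_0}(C_X),\eta_{Z_0})\leq 0$. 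Thus $f_{XZ_0}(C_X)$ is not a zero of $\eta_{Z_0}$ and hence not contracted by $\pi_{\mathrm{anti}}$, so $f_{XZ}$ still sends $C_X$ to a curve.

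For (2), invoke (3) to obtain an elaboration $\pi:Z\to Y$ on which $E_Z:=f_{XZ}(C_X)$ is a curve. Proposition \ref{jformula} combined with invariance gives
$$\ord(C_X,\eta_X)+1=m(f_{XZ},C_X)\bigl(\ord(E_Z,\eta_Z)+1\bigr).$$
Since $E_Z$ is contracted by the elaboration $\pi$ to $p:=f_{XY}(C_X)$, Corollary \ref{blowupcor} immediately gives $p\in\supp\div\eta_Y$. If in addition $C_X$ is polar, the left side is $\leq 0$, forcing $\ord(E_Z,\eta_Z)\leq -1$, i.e.\ $E_Z$ is a pole of $\eta_Z$. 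Factor $\pi=\pi_{\mathrm{str}}\circ\pi_{\mathrm{per}}$ via Corollary \ref{decomps}; the exceptional curves of $\pi_{\mathrm{per}}$ are by definition non-polar, so $\pi_{\mathrm{per}}$ does not contract $E_Z$, and therefore $E_Z$ must be contracted by $\pi_{\mathrm{str}}$. Corollary \ref{blowupcor} applied to the strong elaboration $\pi_{\mathrm{str}}$ then shows $p$ is a multiple point of $\eta_Y$.

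The principal technical obstacle is the un-contraction argument in (3): identifying an invariant derived from the local expansion of $f_{XY}$ along $C_X$ that strictly decreases under each blowup so that the procedure terminates. The remainder is a direct application of Proposition \ref{jformula}, invariance of $\eta$, and the structural results on elaborations from the preceding section.
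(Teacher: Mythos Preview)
Your proof is correct and follows essentially the same strategy as the paper: prove (1) by birational invariance, handle (3) by blowing up the image until $C_X$ is no longer contracted and then passing to the elaboration factor via Corollary \ref{decomps}, and deduce (2) from (3) via Proposition \ref{jformula} and Corollary \ref{blowupcor}.

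The one place where your argument is weaker than the paper's is exactly the spot you flag yourself: the termination of the un-contraction procedure in (3). You sketch a local-coordinate approach with an unspecified invariant built from the exponents $(a_1,a_2)$ and the leading coefficients of the $\tilde g_i$. This can be made to work, but it is fiddly. The paper instead uses a global, coordinate-free invariant: the order $\ord(C_X,\crit(f_{XY}))$ of $C_X$ in the critical divisor. If $\sigma:\hat Y\to Y$ blows up $q=f_{XY}(C_X)$ with exceptional curve $E$, then $\div\eta_{\hat Y}=\sigma^*\div\eta_Y+E$ while $f_{X\hat Y}^*\eta_{\hat Y}=f_{XY}^*\eta_Y$, so the Hurwitz formula \eqref{hurwitz} gives
$$
\ord(C_X,\crit(f_{X\hat Y}))=\ord(C_X,\crit(f_{XY}))-\ord(C_X,f_{X\hat Y}^*E),
$$
and the last term is $\geq 1$ because $f_{X\hat Y}$ sends $C_X$ into $E$. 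Since $\ord(C_X,\crit(\cdot))\geq 0$, the process terminates. This replaces your entire local computation with a two-line global argument, and I would recommend adopting it.

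Your treatment of (2) is slightly more explicit than the paper's about why the image of a polar curve is a multiple point (factoring $\pi=\pi_{\mathrm{str}}\circ\pi_{\mathrm{per}}$ and noting that $\pi_{\mathrm{per}}$ cannot contract poles); the paper compresses this into a single appeal to Corollary \ref{blowupcor}, but your version is arguably clearer.
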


\begin{proof}
The first assertion follows directly from the fact that, by definition, any two elaborations of $f$ are conjugate by a birational map that respects equivalence between irreducible curves. For the other assertions we assume that $f_{XY}$ contracts $C_X$.
If $\sigma:\hat Y\to Y$ is the blowup of $Y$ at the image $f_{XY}(C_X)$, then the order of $C_X$ in the critical set of $f_{X\hat Y}$ is strictly less than it is in the critical set of $f_{XY}$.  Since the order cannot be less than zero, it follows inductively that there is a modification $\pi: Y'\to Y$ such that $f_{XY'}$ does not contract $C_X$.  

Then from Proposition \ref{jformula} and $\ord(C_X,\eta_X) \leq 0$, we have that that $\ord(f_{XY'}(C_X),\eta_{Y'}) \leq 0$.  Factoring $\pi  =\pi_{elab}\circ \pi_{anti}$ into an elaboration $\pi_{elab}:Z\to Y$ and an anti-elaboration $\pi_{anti}:Y'\to Z$, we obtain that $\pi_{anti}$ does not contract $f_{XY'}(C_X)$.  Therefore neither does $f_{XZ} = \pi_{anti}\circ f_{XY'}$.  This proves the third assertion.  Proposition \ref{jformula} also tells us that $f_{XZ}(C_X)$ is a pole of $\eta_Z$ if and only if $C_X$ is a pole of $\eta_X$.  Therefore, the second assertion follows
from $f_{XY}(C_X) = \pi_{elab}(f_{XZ}(C_X))$ and Corollary \ref{blowupcor}.
\end{proof}

The first and third assertions in Proposition \ref{miscmapprop} imply that $f$ induces a well-defined map $\action:\primes \to \primes$.

\begin{defn}
Given $C\in\primes$, let $f_{XY}$ be an elaboration of $f$ such that $C$ appears in $X$ and $f_{XY}$ does not contract $C_X$.  Then we let $\action C\in \primes$ denote the prime incarnated by $f_{XY}(C_Y)$.  We also set
$m(f,C) := m(f_{XY},C_X)$ and $\deg f|_C:=\deg f_{XY}|_{C_X}$.
\end{defn}

Proposition \ref{jformula} has the following more or less immediate consequences.

\begin{cor}
\label{jcor} The following hold for all primes $C\in\primes$.
\begin{itemize}
\item $C$ is a (simple) pole of $\eta$ if and only if $\action C$ is.
\item if $f$ is branched about $C$ (i.e. $m(f,C)>1$), then either $C$ is simple, or $|\ord(\action C,\eta)| < |\ord(C,\eta)|$;
\item in particular $f$ is unbranched about non-polar and double polar $C$.
\end{itemize}
\end{cor}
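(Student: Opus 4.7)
The plan is to reduce all three bullets to a single numerical identity
$$\ord(C,\eta)+1 \;=\; m(f,C)\bigl(\ord(\action C,\eta)+1\bigr),$$
valid for every $C\in\primes$, derived from Proposition~\ref{jformula} combined with the invariance $f^*\eta=\delta\eta$. To set this up, given $C\in\primes$ I would choose, using Proposition~\ref{miscmapprop}(3), an elaboration $f_{XY}:X\to Y$ in which $C$ has an incarnation $C_X$ not contracted by $f_{XY}$. Pulling $f^*\eta=\delta\eta$ up via $\pi_X$ gives $f_{XY}^*\eta_Y = \delta\eta_X$, which has the same divisor as $\eta_X$, so applying Proposition~\ref{jformula} to $f_{XY}$, $\eta_Y$, and $C_X$ and rewriting the orders and multiplicity intrinsically in $C$ (permitted by Proposition~\ref{miscmapprop}(1)) produces the displayed identity.

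Since $\eta$ is zero-free, every prime $D$ satisfies $\ord(D,\eta)\leq 0$, hence $\ord(D,\eta)+1\leq 1$. The first bullet is then immediate from inspecting signs: the two sides of the identity vanish together and are strictly negative together, so $C$ is a pole iff $\action C$ is, and $\ord(C,\eta)=-1$ iff $\ord(\action C,\eta)=-1$. The two extreme cases of the third bullet are just integer factorizations. If $C$ is non-polar, the identity reads $1 = m(f,C)(\ord(\action C,\eta)+1)$, and the only factorization of $1$ into a positive integer $m(f,C)$ times an integer factor that is at most $1$ has $m(f,C)=1$. If $C$ is a double pole, the identity reads $-1 = m(f,C)(\ord(\action C,\eta)+1)$, and the only such factorization of $-1$ with $m(f,C)\geq 1$ and the second factor a non-positive integer is $m(f,C)=1$, $\ord(\action C,\eta)=-2$.

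The second bullet is then essentially residual. If $m(f,C)>1$, the third bullet forces $C$ to be neither non-polar nor a double pole; so if additionally $C$ is not simple, then $C$ is a pole with $\ord(C,\eta)\leq -2$. By the first bullet $\action C$ is polar as well, so both $\ord+1$ terms in the identity are $\leq -1$, and dividing gives
$$|\ord(\action C,\eta)+1| \;=\; \frac{|\ord(C,\eta)+1|}{m(f,C)} \;<\; |\ord(C,\eta)+1|.$$
Because both orders are at most $-1$, we have $|\ord(\cdot,\eta)+1| = |\ord(\cdot,\eta)|-1$ in both cases, so the strict inequality transfers to $|\ord(\action C,\eta)| < |\ord(C,\eta)|$. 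I expect no real obstacle here: the whole corollary is bookkeeping with the Hurwitz formula under the zero-free hypothesis. The only point needing care is that every quantity in the identity is genuinely intrinsic to the prime $C$, which is exactly what Proposition~\ref{miscmapprop}(1) was set up to provide.
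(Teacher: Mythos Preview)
Your proof is correct and is exactly the argument the paper has in mind: the corollary is stated without proof, only as ``more or less immediate consequences'' of Proposition~\ref{jformula}, and what you have written is precisely the unpacking of that remark via the log-discrepancy identity $\ord(C,\eta)+1 = m(f,C)(\ord(\action C,\eta)+1)$ together with $\ord\leq 0$. Your choice to establish the third bullet before the second (rather than deriving the third as a special case of the second, as the word ``in particular'' suggests) is harmless, since you prove both directly from the identity and there is no circularity.
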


As in \S\ref{classification}, we write $f_X$ instead of $f_{XX}$ to indicate an elaboration of $f$ with the same domain and range.

\begin{prop}
\label{strongcontract}
Let $f_X$ be an elaboration of $f$.  Then there exists a strong elaboration $\pi:Y\to X$ of $\eta_X$ such that if $f_Y$ contracts a non-polar curve $C_Y\subset Y$, then $f_Y(C_Y)$ is a simple point for $\eta_Y$.
\end{prop}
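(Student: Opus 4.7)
The plan is to apply Proposition \ref{miscmapprop}(3) to each non-polar curve contracted by $f_X$ and then take $\pi$ to dominate the strong parts of the resulting resolutions.

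Let $C_X^{(1)}, \ldots, C_X^{(k)}$ be the finitely many non-polar irreducible curves contracted by $f_X$; by Proposition \ref{miscmapprop}(2) each image $q_j := f_X(C_X^{(j)})$ lies in $\supp\div\eta_X$. For each $j$, Proposition \ref{miscmapprop}(3) yields an elaboration $\pi_j:Z_j\to X$ of the range such that $f_{XZ_j}$ does not contract $C_X^{(j)}$; set $C_{Z_j} := f_{XZ_j}(C_X^{(j)})$, so $\pi_j(C_{Z_j}) = q_j$. Because $\eta_X$ is zero-free and $C_X^{(j)}$ is non-polar, $\ord(C_X^{(j)},\eta_X) = 0$, and Proposition \ref{jformula} together with $f^*\eta = \delta\eta$ gives
\[
1 = \ord(C_X^{(j)},\eta_X) + 1 = m(f_{XZ_j}, C_X^{(j)})\bigl(\ord(C_{Z_j},\eta_{Z_j}) + 1\bigr),
\]
forcing $m(f_{XZ_j}, C_X^{(j)}) = 1$ and $\ord(C_{Z_j},\eta_{Z_j}) = 0$; in particular $C_{Z_j}$ is non-polar.

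Decompose $\pi_j = \pi_{\mathrm{str},j}\circ\pi_{\mathrm{per},j}$ via Corollary \ref{decomps}, with $\pi_{\mathrm{per},j}:Z_j\to W_j$ peripheral and $\pi_{\mathrm{str},j}:W_j\to X$ strong. Since strong elaborations contract only polar curves, the non-polar $C_{Z_j}$ must already be contracted by $\pi_{\mathrm{per},j}$, and Corollary \ref{blowupcor} then says $p_{W_j} := \pi_{\mathrm{per},j}(C_{Z_j})$ is a simple point of $\eta_{W_j}$. Now choose $\pi:Y\to X$ to be a strong elaboration dominating every $\pi_{\mathrm{str},j}$; concretely, perform every multiple-point blowup that appears in any of the $\pi_{\mathrm{str},j}$. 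For each $j$ we obtain $\pi = \pi_{\mathrm{str},j}\circ\beta_j$ for a strong elaboration $\beta_j:Y\to W_j$, and since $\beta_j$ never blows up the simple point $p_{W_j}$, the preimage $\beta_j^{-1}(p_{W_j})$ is a single simple point $p_Y^{(j)}$ of $\eta_Y$.

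To verify the proposition, let $C_Y\subset Y$ be any non-polar irreducible curve contracted by $f_Y$. Every new irreducible curve in $Y$ is an exceptional divisor of the strong elaboration $\pi$, hence polar, so $C_Y$ is the strict transform of some non-polar curve $C_X\subset X$. Commutativity $\pi\circ f_Y = f_X\circ\pi$ then forces $f_X$ to contract $C_X$, so $C_X = C_X^{(j)}$ for some $j$. Writing $f_{XY} = \pi^{-1}\circ f_X = \beta_j^{-1}\circ\pi_{\mathrm{per},j}\circ f_{XZ_j}$, we obtain $f_{XY}(C_X^{(j)}) = \beta_j^{-1}(p_{W_j}) = p_Y^{(j)}$, and hence $f_Y(C_Y) = f_{XY}(C_X^{(j)}) = p_Y^{(j)}$, which is simple in $\eta_Y$ as required.

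The main obstacle is the decomposition argument: forcing $C_{Z_j}$ to be contracted by $\pi_{\mathrm{per},j}$ rather than $\pi_{\mathrm{str},j}$ depends crucially on the multiplicity identity $m(f_{XZ_j}, C_X^{(j)}) = 1$, which in turn relies on $\eta_X$ being zero-free. Once that is secured, assembling the individual $\pi_{\mathrm{str},j}$ into a single strong elaboration $\pi$ is just bookkeeping of multiple $\eta$-points, and the fact that strong elaborations preserve simple points is what makes each $p_Y^{(j)}$ simple in $\eta_Y$.
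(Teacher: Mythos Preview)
Your proof is correct and follows essentially the same approach as the paper's, just more explicitly unpacked. The paper's three-line argument says: for each contracted non-polar $C_X$, the prime $\action C$ is exceptional, so take any strong elaboration $Y\to X$ in which $\orig(\action C)$ appears. Your construction of $p_{W_j}$ via the strong/peripheral decomposition of $\pi_j$ is precisely a hands-on recovery of the origin $\orig(\action C^{(j)})$, and your dominating strong elaboration $Y$ is exactly the one the paper asks for; you simply avoid invoking the $\orig$ notation by rebuilding it from Corollary~\ref{decomps} and Corollary~\ref{blowupcor}.
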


\begin{proof}
There are only finitely many non-polar curves $C_X$ contracted by $f_X$, and for each of these $\action C$ is an exceptional prime that does not appear in $X$.  Take $\pi:Y\to X$ to be any strong elaboration such that
$\orig(\action C)$ appears in $Y$ whenever $f_X$ contracts $C_X$.
\end{proof}

Unlike exceptional curves, points of indeterminacy cannot always be eliminated by elaboration.  The following result tells us that they can be separated somewhat from the divisor of $\eta$.  It implies that the restriction $\action:\poles \to \poles$ is surjective.

\begin{prop}
\label{indeterminacy}
Let $f_{XY}:X\to Y$ be an elaboration of $f$.  If $p_X\in X$ is not a multiple point for $\eta_X$, then every irreducible component of $f_{XY}(p_X)$ is non-polar.  Furthermore, there exist
\begin{itemize}
\item a strong elaboration $X'\to X$ such that $I(f_{X'Y})$ meets $\supp\div\eta_{X'}$ only at simple points; and
\item a further peripheral elaboration $X''\to X'$ such that $I(f_{X''Y})$ is disjoint from $\supp\div\eta_{X''}$.
\end{itemize}
\end{prop}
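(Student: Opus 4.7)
The plan is to establish the three assertions in sequence. The first is a divisor-order calculation on a resolution of $f_{XY}$, while the second and third exploit the strong/peripheral/anti-elaboration factorization from Corollary \ref{decomps}.

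For the first assertion, take the graph $Z$ of $f_{XY}$ with projections $\pi_1\colon Z\to X$ and $\pi_2\colon Z\to Y$. Since $f^*\eta=\delta\eta$, the divisors $\div\pi_1^*\eta_X$ and $\div\pi_2^*\eta_Y$ coincide. Any irreducible component $V$ of $f_{XY}(p_X)$ is the image under $\pi_2$ of some irreducible component $E_Z$ of $\pi_1^{-1}(p_X)$ that is not contracted by $\pi_2$, and Proposition \ref{jformula} applied to $\pi_2$ gives
$$
\ord(E_Z,\pi_1^*\eta_X)+1 = m(\pi_2,E_Z)\bigl(\ord(V,\eta_Y)+1\bigr).
$$
It therefore suffices to show $\ord(E_Z,\pi_1^*\eta_X)\ge 0$, which I do by induction on the number of blowups comprising $\pi_1$ over $p_X$: the hypothesis $\ord(p_X,\eta_X)\ge -1$ guarantees that the first exceptional divisor over $p_X$ has order $\ge 0$; Proposition \ref{blowup} and a simple case check (blowing up a simple point produces a non-polar exceptional meeting the unique pole through it at one simple point; blowing up a non-polar or zero point produces an exceptional divisor disjoint from all poles) show that no infinitely near point over $p_X$ is ever multiple, so every successive exceptional divisor inherits non-negative order.

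For the strong elaboration, invoke Corollary \ref{decomps} to factor $\pi_1 = \pi_{\mathrm{str}}\circ\pi_{\mathrm{per}}\circ\pi_{\mathrm{anti}}$ and take $X'\to X$ to be $\pi_{\mathrm{str}}$. The residual morphism $\pi_{\mathrm{per}}\circ\pi_{\mathrm{anti}}\colon Z\to X'$ still resolves $f_{X'Y}$, but read as a sequence of blowups going from $X'$ up to $Z$ it consists only of peripheral blowups (centered at simple points of $\eta_{X'}$, by Proposition \ref{blowup}) and anti-elaboration blowups (centered at points of order $\ge 0$). Neither kind of blowup is ever centered at a multiple point, so $\pi_{\mathrm{per}}\circ\pi_{\mathrm{anti}}$ is an isomorphism in a neighborhood of any multiple point $p_{X'}$ of $\eta_{X'}$. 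Therefore $f_{X'Y}$ is regular at such $p_{X'}$, and $I(f_{X'Y})\cap\supp\div\eta_{X'}$ contains only simple points.

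To construct $X''\to X'$, iteratively blow up the (now necessarily simple) points in $I(f_{X'Y})\cap\supp\div\eta_{X'}$; each such blowup is peripheral. The exceptional curve produced by blowing up a simple point $p$ on a pole $C$ is non-polar, meets the strict transform of $C$ at exactly one simple point, and is disjoint from every other pole, so the property that indeterminacy on $\supp\div\eta$ consists only of simple points is preserved at each step. Termination, which is the main delicate point, follows from the fact that our iterative process traces a sub-sequence of the minimal resolution of $f_{X'Y}$, consisting exactly of the blowups at simple indeterminate centers; since the minimal resolution is finite, the process must halt with $I(f_{X''Y})\cap\supp\div\eta_{X''}=\emptyset$. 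Throughout, the main obstacle is keeping careful track of how orders at infinitely near points evolve under blowup, and Corollary \ref{decomps} is the clean mechanism that prevents strong blowups from reappearing once we have passed to $X'$.
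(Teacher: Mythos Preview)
Your argument is correct and follows the paper's approach closely for the first two assertions.  For the first assertion, the paper obtains the key inequality $\ord(E_Z,\eta_Z)\ge 0$ in one stroke by invoking the strict case of Lemma~\ref{orderdecreases} (i.e.\ contraction by a modification raises order by at least one), whereas you reprove this by an explicit induction on the blowup tower over $p_X$; both are fine, and your hands-on version has the minor advantage of not leaning on the slightly informal way the paper cites Lemma~\ref{orderdecreases} here.  For the second assertion your argument is essentially identical to the paper's.

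Where you diverge is the third assertion.  Having already factored the graph projection as $\pi_{\mathrm{str}}\circ\pi_{\mathrm{per}}\circ\pi_{\mathrm{anti}}$ to produce $X'$, the paper simply takes $X''$ to be the intermediate surface between $\pi_{\mathrm{per}}$ and $\pi_{\mathrm{anti}}$: since $\pi_{\mathrm{anti}}\colon\Gamma\to X''$ is an anti-elaboration, Corollary~\ref{blowupcor} says every curve it contracts lands off $\supp\div\eta_{X''}$, so $f_{X''Y}$ is automatically regular along $\supp\div\eta_{X''}$.  This is a one-line argument using machinery you already set up.  Your iterative construction (repeatedly blowing up simple indeterminate points and arguing termination via domination by the graph) works, but it is considerably more labor-intensive and the termination step, while correct, deserves a more careful statement than ``traces a sub-sequence of the minimal resolution'': what you are really using is that the graph $\Gamma$ dominates each of your intermediate surfaces $W_k$ (since every center you blow up is indeterminate for $f_{W_kY}$), which bounds the length of the process by the number of exceptional components of $\Gamma\to X'$.
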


\begin{proof}
Let $\Gamma$ be the graph of $f_{XY}$.   Then projection $\pi:\Gamma \to X$ onto the domain is a modification of $X$ and therefore factors $\pi = \pi_{elab}\circ\pi_{anti} = \pi_{str}\circ\pi_{per}\circ \pi_{anti}$ into a strong elaboration $\pi_{str}:X'\to X$, a peripheral elaboration $\pi_{per}:X''\to X'$ and an antielaboration $\pi_{anti}:\Gamma \to X''$.  Note that $\Gamma$ is the graph of $f_{X'Y}$ and $f_{X''Y}$ as well as $f_{XY}$.

Fix $p_X\in I(f_{XY})$, and let $\tilde C_Y\in Y$ be any irreducible curve in $f_{XY}(p_X)$.  Then $\tilde C_Y = f_{\Gamma Y}(C_\Gamma)$ for some irreducible $C_\Gamma\subset \Gamma$ such that $p_X = \pi(C_\Gamma)$.  If $p_X$ is not multiple for $\eta_X$, then Proposition \ref{orderdecreases} gives us that $\ord(C_\Gamma,\eta_\Gamma) > \ord(p_X,\eta_X) \geq -1$, and then Proposition \ref{jformula} implies that $\ord(\tilde C_Y,\eta_Y) \geq 0$.  This proves the first assertion.

For the second assertion, fix $p_{X'}\in I(f_{X'Y})$.  Then as in the previous paragraph $f_{X'Y}(p_{X'})$ is a union of irreducible curves $f_{\Gamma Y}(C_\Gamma)$ where $\pi_{per}\circ \pi_{anti}$ contracts $C_\Gamma$ to $p_{X'}$.  Hence the assertion follows from the fact that the image of a curve contracted by $\pi_{anti}$ is a point of non-negative order for $\eta_{X''}$, and the image of a curve contracted by $\pi_{per}$ is a simple point for $\eta_{X'}$.  The argument for the third assertion is identical.
\end{proof}

\begin{cor}
\label{nononpolars}
Let $\pi_X$ be any elaboration of $\eta$.  Then there exists a strong elaboration $X'\to X$ of $\eta_X$ such that for all multiple points $p_{X'}$ of $\eta_Y$, $f_{X'}(p_{X'})$ contains no non-polar curves.  The same conclusion holds for any further strong elaboration $X''\to X'$ of $\eta_{X'}$.
\end{cor}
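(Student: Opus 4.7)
\proofof{Corollary \ref{nononpolars}}

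The plan is to apply Proposition \ref{indeterminacy} directly to $f_X = f_{XX}$, taking $Y := X$. This yields a strong elaboration $\pi: X' \to X$ of $\eta_X$ such that $I(f_{X'X}) \cap \supp\div\eta_{X'}$ contains only simple points. In particular, any multiple point $p_{X'}$ of $\eta_{X'}$ lies outside $I(f_{X'X})$, so $f_{X'X}(p_{X'})$ is a single point $q_X \in X$. Using the identity $f_{X'} = \pi^{-1}\circ f_{X'X}$ of rational maps, the set-theoretic image $f_{X'}(p_{X'})$ equals $\pi^{-1}(q_X)$: either a point (if $q_X$ is not a blowup center of $\pi$), or else a connected union of curves contracted by $\pi$. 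In the latter case every such curve is polar for $\eta_{X'}$ because $\pi$ is a strong elaboration, so $f_{X'}(p_{X'})$ contains no non-polar components.

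For the persistence claim, let $\tau: X''\to X'$ be any further strong elaboration and let $p_{X''}$ be a multiple point of $\eta_{X''}$. I would first observe that $q_{X'} := \tau(p_{X''})$ is again a multiple point of $\eta_{X'}$: either $\tau$ is a local isomorphism near $p_{X''}$, in which case the multiplicity transfers directly, or $p_{X''}$ lies on an exceptional curve of $\tau$ and $q_{X'}$ is a blowup center of $\tau$, which must be multiple for $\eta_{X'}$ since $\tau$ is strong. The relation $\pi_{X''}=\pi_{X'}\circ\tau$ combined with $\pi_{X'}\circ f_{X''X'} = f\circ \pi_{X''}$ yields, after cancelling $\pi_{X'}$, the factorization $f_{X''X'} = f_{X'}\circ\tau$ as rational maps, so $f_{X''X'}(p_{X''}) = f_{X'}(q_{X'})$, which by the first step is either a single point $r_{X'}$ or a union of polar curves of $\eta_{X'}$.

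Finally, unwinding $f_{X''} = \tau^{-1}\circ f_{X''X'}$ gives $f_{X''}(p_{X''}) = \tau^{-1}(f_{X'}(q_{X'}))$. If $f_{X'}(q_{X'}) = r_{X'}$, then $\tau^{-1}(r_{X'})$ is either a point or a union of exceptional curves of $\tau$, which are polar since $\tau$ is strong. If $f_{X'}(q_{X'})$ is a union of polar curves, then $\tau^{-1}$ applied to each component gives its strict transform in $X''$ (still a pole of $\eta_{X''}$) together possibly with further exceptional curves of $\tau$ (again polar). Either way, no non-polar component appears in $f_{X''}(p_{X''})$. The main care needed is bookkeeping: relating $f_{X'}$ to $f_{X'X}$ via the factorization $f_{X'} = \pi^{-1}\circ f_{X'X}$ so that Proposition~\ref{indeterminacy} can be invoked, and in the persistence step observing that strict transforms of poles and exceptional curves of strong elaborations both remain polar --- neither of which I expect to be a serious obstacle.
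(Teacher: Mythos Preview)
Your proof is correct and follows essentially the same approach as the paper's (much terser) argument: apply Proposition~\ref{indeterminacy} with $Y=X$, then observe that replacing the target $X$ by $X'$ (and then by a further strong elaboration $X''$) can only introduce image curves that are contracted by a strong elaboration, hence polar. One small imprecision: in the persistence step the equality $f_{X''X'}(p_{X''}) = f_{X'}(q_{X'})$ should in general only be the inclusion $\subseteq$ (blowing up over $q_{X'}$ can partially resolve the indeterminacy and shrink the image), but the inclusion is all your argument uses.
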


\begin{proof}
Take $X = Y$ in Proposition \ref{indeterminacy}, and let $\pi:X'\to X$ be the strong elaboration given in the first conclusion of the Proposition.  Replacing the target of $f_{X'X}$ with $X'$ might reintroduce multiple points to $I(f_{X'})$.  But since $X'\to X$ is a strong elaboration, the image of a multiple point by $f_{X'}$ will only contain poles of $\eta_{X'}$.  The same goes for further strong elaborations $X'' \to X'$.
\end{proof}

\begin{cor}
\label{invarianceofpoles}
We have $\action(\primes\setminus\poles)\subset \primes\setminus\poles$, $\action(\poles) = \poles$ and $\action(\simples) = \simples$.
\end{cor}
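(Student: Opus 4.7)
My plan is to reduce everything to Corollary \ref{jcor} together with the observation that $f$, and hence every elaboration $f_{XY}:X\to Y$, is a dominant rational map. The first bullet of Corollary \ref{jcor} says that $C$ is a (simple) pole if and only if $\action C$ is a (simple) pole. Taking this directly yields $\action(\poles)\subset\poles$ and $\action(\simples)\subset\simples$, and its contrapositive yields $\action(\primes\setminus\poles)\subset\primes\setminus\poles$. So three of the four containments in the statement are free, and only surjectivity of $\action|_{\poles}$ and $\action|_{\simples}$ requires real work.

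For the surjectivity statements, I would fix $C'\in\poles$ and choose elaborations $\pi_X,\pi_Y$ of $\eta$ in which $C'$ is smoothly incarnated as $C'_Y\subset Y$. Since $f$ is dominant (because $\delta\in\C^*$), the lift $f_{XY}$ is dominant as well, so its generic fiber consists of $\lambda_2$ points lying outside $I(f_{XY})$. For a generic point $q\in C'_Y$ the preimage $f_{XY}^{-1}(q)$ is therefore a $\lambda_2$-point subset of $X\setminus I(f_{XY})$, and as $q$ varies over a Zariski open subset of $C'_Y$ these preimages sweep out a one-dimensional algebraic set whose Zariski closure contains some irreducible curve $\tilde C_X\subset X$ that maps onto $C'_Y$ under $f_{XY}$ (and is in particular not contracted). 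Letting $C\in\primes$ be the prime incarnated by $\tilde C_X$, we have $\action C=C'$. Corollary \ref{jcor} then forces $C\in\poles$, and if additionally $C'\in\simples$ then $C\in\simples$.

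The only step I expect to require even mild care is confirming that $\tilde C_X$ really exists as a genuine irreducible curve in $X$, rather than being somehow absorbed into the indeterminacy of $f_{XY}$. The generic-fiber argument handles this automatically since $I(f_{XY})$ is zero-dimensional, but if a cleaner setup is desired I would first invoke Proposition \ref{indeterminacy} to arrange that $I(f_{XY})$ meets $\supp\div\eta_X$ only at simple points, or even misses $\supp\div\eta_X$ entirely after a further peripheral elaboration. With that separation in place the dominance argument produces $\tilde C_X$ unambiguously, and the rest is routine bookkeeping using Corollary \ref{jcor}.
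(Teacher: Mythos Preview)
Your proposal is correct and follows essentially the same approach as the paper. The paper's proof is simply the two-line version of what you wrote: it cites Corollary \ref{jcor} for all three ``$\subset$'' inclusions and then invokes Proposition \ref{indeterminacy} for the two surjectivity statements, leaving the dominance argument you spell out (finding an irreducible $\tilde C_X$ mapping onto $C'_Y$) implicit. Your anticipation that Proposition \ref{indeterminacy} is the relevant cleanup tool is exactly right; the only minor imprecision is that a generic $q\in C'_Y$ need not have exactly $\lambda_2$ preimages (since $C'_Y$ may lie in the branch locus), but the fiber is still nonempty and finite, which is all you use.
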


\begin{proof}
The inclusion `$\subset$' is from Corollary \ref{jcor} in all three cases.  Equality in the last two follows from Proposition \ref{indeterminacy}.
\end{proof}

\begin{rem}
\label{beyondelab}
If $X\to S$ is merely birational, i.e. not an elaboration or even a modification, then despite the fact that $\eta_X$ will typically have zeros as well as poles, the poles $C_X$ of $\eta_X$ still correspond naturally (i.e. via the same equivalence relation) to primes $C\in\poles$.   The main thing here is that the first conclusion in Corollary \ref{decomps}, together with the existence of a common modification $\Gamma\to X, S$, guarantees that there is an elaboration $X'\to S$ such that any pole of $\eta_X$ is equivalent to a pole of $\eta_{X'}$.

If, moreover, $Y\to S$ is another surface, then Proposition \ref{jformula} guarantees that $\action:\poles\to\poles$ still governs the behavior of $f_{XY}$ on poles of $\eta_X$, i.e. if $C_X\subset X$ is a pole of $\eta_X$, then $f_{XY}(C_X)$ is the point or irreducible curve in $\supp\eta_Y$ representing $\action C$. This will be useful to us in \S \ref{corrigibility} below.
\end{rem}

\begin{prop}
\label{simpletosimpleprop}
Let $f_{XY}$ be an elaboration of $f$ and $p_X\in X$ be any point not in $I(f_{XY})$.
\begin{itemize}
 \item If $p_X$ is multiple for $\eta_X$, then $f_{XY}(p_X)$ is multiple for $\eta_Y$.
 \item If $p_X$ is simple for $\eta_X$ and $f_{XY}$ does not contract the unique pole $C_X$ of $\eta_X$ that contains $p_X$, then $f_{XY}(p_X)$ is simple for $\eta_Y$.
\end{itemize}
\end{prop}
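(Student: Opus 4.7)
The first bullet is a short consequence of Proposition~\ref{miscmapprop}(2). Let $\sigma\colon\hat X\to X$ be the blowup of $X$ at $p_X$, with exceptional curve $E_{\hat X}$. Since $p_X$ is multiple, $\ord(E_{\hat X},\eta_{\hat X})=\ord(p_X,\eta_X)+1\leq -1$, so $E_{\hat X}$ is a pole of $\eta_{\hat X}$. Because $\sigma$ is holomorphic everywhere and $f_{XY}$ is holomorphic at $p_X$, the composition $f_{\hat X Y}=f_{XY}\circ\sigma$ is holomorphic in a neighborhood of $E_{\hat X}$ and contracts $E_{\hat X}$ to the single point $q_Y:=f_{XY}(p_X)$. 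Proposition~\ref{miscmapprop}(2), applied to the polar contracted curve $E_{\hat X}$, then immediately identifies $q_Y$ as a multiple point of $\eta_Y$.

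For the second bullet, set $C'_Y=f_{XY}(C_X)$ and $q_Y=f_{XY}(p_X)$. Proposition~\ref{jformula} applied to $C_X$ with $\ord(C_X,\eta_X)=-1$ gives $m(f_{XY},C_X)\bigl(\ord(C'_Y,\eta_Y)+1\bigr)=0$, forcing $\ord(C'_Y,\eta_Y)=-1$, so $C'_Y$ is a simple pole. The central ingredient I plan to use is the Poincar\'e residue formula for ramified covers,
$$\operatorname{Res}_{C_X}\bigl(f_{XY}^*\eta_Y\bigr) = m(f_{XY},C_X)\cdot (f_{XY}|_{C_X})^*\operatorname{Res}_{C'_Y}\eta_Y,$$
which one verifies by a direct local computation (with $C_X=\{u=0\}$, $C'_Y=\{s=0\}$, $f_1=u^m\alpha$, and $\alpha|_{u=0}\not\equiv 0$). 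Combined with $f_{XY}^*\eta_Y=\delta\eta_X$ this rearranges to
$$(f_{XY}|_{C_X})^*\operatorname{Res}_{C'_Y}\eta_Y = \frac{\delta}{m(f_{XY},C_X)}\,\operatorname{Res}_{C_X}\eta_X.$$
Because $p_X$ is simple on the smooth simple pole $C_X$ and $\eta_X$ has no zeros, the right-hand side is a holomorphic, nowhere-vanishing $1$-form at $p_X$.

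I then argue by contradiction: were $q_Y$ multiple for $\eta_Y$, the residue $\operatorname{Res}_{C'_Y}\eta_Y$, taken on the normalization of $C'_Y$ to accommodate a possible singularity of $C'_Y$ at $q_Y$, would have a pole at the lift $\tilde q$ of $q_Y$ corresponding to the local branch of $C'_Y$ into which $f_{XY}|_{C_X}$ maps near $p_X$; this is because such a pole arises either from a second pole component of $\eta_Y$ through $q_Y$ or from another local branch of $C'_Y$ at $q_Y$, either of which witnesses the multiplicity of $q_Y$. Pulling a pole of order $N\geq 1$ back through a local parametrization of the form $v\mapsto v^{c}\hat\gamma(v)$ with $c\geq 1$ and $\hat\gamma(0)\neq 0$ yields a pole of order $c(N-1)+1\geq 1$ at $p_X$, contradicting regularity of the left-hand side. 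The main technical obstacle will be establishing the residue formula in the ramified setting and carefully using the normalization of $C'_Y$; once those are in hand, the conclusion is forced.
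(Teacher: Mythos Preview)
Your proof of the first bullet is identical to the paper's: blow up $p_X$, note the exceptional curve is polar, and invoke Proposition~\ref{miscmapprop}(2).

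For the second bullet you take a genuinely different route. The paper instead blows up the \emph{target} at $q_Y=f_{XY}(p_X)$, obtaining an exceptional curve $E_{\hat Y}$, and then argues that $E_{\hat Y}$ must be non-polar: either $p_X$ becomes indeterminate for $f_{X\hat Y}$ (whence Proposition~\ref{indeterminacy} forces $E_{\hat Y}$ non-polar), or some non-polar curve through $p_X$ was contracted to $q_Y$ and now maps onto $E_{\hat Y}$ (whence Proposition~\ref{jformula} forces $E_{\hat Y}$ non-polar). Corollary~\ref{blowupcor} then gives that $q_Y$ is simple. Your approach, by contrast, anticipates the Poincar\'e residue computation that the paper carries out later as Proposition~\ref{residue}, and uses it to transport regularity of $\res_{C_X}\eta_X$ at $p_X$ to regularity of $\res_{C'_Y}\eta_Y$ at (the relevant branch over) $q_Y$. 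There is no circularity, since the proof of Proposition~\ref{residue} is a self-contained local calculation. The paper's argument is shorter and stays within the elaboration framework already built; yours is more analytic and has the side benefit of establishing the residue identity early.

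One small gap: your dichotomy ``a second pole component of $\eta_Y$ through $q_Y$ or another local branch of $C'_Y$ at $q_Y$'' does not cover the case where $C'_Y$ has a \emph{unibranch} singularity (e.g.\ a cusp) at $q_Y$ with no other pole present; such a point is still multiple since $\mu(q_Y,C'_Y)\geq 2$, and the residue on the normalization does acquire a pole there, but this needs its own verification rather than falling under either alternative you name. Once that case is included, your argument is complete.
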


\begin{proof}
Suppose that $p_X$ is multiple for $\eta_X$.  Let $\sigma:\hat X\to X$ be the blowup at $p_X$.   Then the curve $E_{\hat X}$ contracted by $\sigma$ is a pole for $\eta_{\hat X}$.  Hence the second conclusion of Proposition \ref{miscmapprop} implies that $f_{\hat X Y}(E_{\hat X}) = f_{XY}(p_X)$ is multiple.

Now suppose that $p_X$ is simple, contained a unique simple pole $C_X$ of $\eta_X$, and that $f_{XY}$ does not contract $C_X$.  Let $\sigma:\hat Y\to Y$ be the blowup of $Y$ at $f_{XY}(p_X)$ and $E_{\hat Y} = \sigma^{-1}(f_{XY}(p_X))$ be the curve contracted by $\sigma$.  Then one of two things occurs.  The first possibility is that $p_X\in I(f_{X\hat Y})$ with $f_{X\hat Y}(p_X) = E_{\hat Y}$.  Then \ref{indeterminacy} tells us that $E_{\hat Y}$ is non-polar for $\eta_{\hat Y}$.  The second possibility is that there is a non-polar irreducible curve $C'_X\ni p_X$ contracted by $f_{XY}$ to $f_{XY}(p_X)$ and that $f_{X\hat Y}(C'_X) = E_{\hat Y}$.  Proposition \ref{jformula} then tells us again that $E_{\hat Y}$ is non-polar.  In either case, Corollary \ref{blowupcor} tells us that $f_{XY}(p_X)$ is simple.
\end{proof}

Let $p\in C\in\poles$ be an $\eta$-point.  Suppose $\pi_X,\pi_Y$ are elaborations smoothly incarnating $C$ and $\action C$, respectively.  Then even if $p_X$ is indeterminate for $f_{XY}$, the restriction of $f_{XY}$ to $C_X$ gives us a well-defined image $q_Y \in (\action C)_Y$ of $p_X$, and $q_Y$ incarnates a point $q\in\action C$ that is independent of the elaborations $\pi_X,\pi_Y$.  We set $\feta(p):=q$ and call $\feta:\polepts\to\polepts$ the \emph{restriction of $f$ to $\eta$}.  The restriction $f_\eta:C\to \action C$ of $f_\eta$ to any particular $C\in\poles$ is holomorphic and surjective.  Proposition \ref{simpletosimpleprop} immediately implies

\begin{cor}
\label{simpletosimple}
$\feta:\polepts\to\polepts$ preserves simple and multiple points.
\end{cor}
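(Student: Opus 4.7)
The plan is to reduce the corollary to the case already handled by Proposition \ref{simpletosimpleprop}, which requires that a chosen incarnation of $p$ not be indeterminate for the chosen elaboration of $f$. So the main work is to arrange elaborations of source and target that simultaneously make the unique pole through $p$ non-contracted and $p$ itself determinate, all while preserving enough information to read off the type of $\feta(p)$ from the resulting image.

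First I fix $p\in C\in\poles$ and pick elaborations $\pi_X:X\to S$ and $\pi_Y:Y\to S$ smoothly incarnating $C$ and $\action C$ respectively. If $f_{XY}$ happens to contract $C_X$, then Proposition \ref{miscmapprop}(3) lets me replace $Y$ by a further elaboration on which $f_{XY}$ no longer contracts $C_X$; this target still smoothly incarnates $\action C$. I then invoke Proposition \ref{indeterminacy} (first the strong and then the peripheral elaboration of the domain it provides) to pass to a new domain $\pi_{X''}:X''\to S$ with $I(f_{X''Y})$ disjoint from $\supp\div\eta_{X''}$. Throughout these modifications, the strict transform $C_{X''}$ remains a smooth incarnation of $C$, $f_{X''Y}$ still does not contract $C_{X''}$, and the incarnation $p_{X''}\in C_{X''}$ retains its simple/multiple type, since this is intrinsic to $p$. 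In particular, $p_{X''}\notin I(f_{X''Y})$.

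With this setup, Proposition \ref{simpletosimpleprop} applies directly. If $p$ is multiple, then so is $p_{X''}$, and part (1) gives that $f_{X''Y}(p_{X''})$ is multiple for $\eta_Y$. If $p$ is simple, then $p_{X''}$ is a simple point of $\eta_{X''}$ whose unique containing pole is $C_{X''}$ (any additional component of $\div\eta_{X''}$ passing through $p_{X''}$ has been introduced by a peripheral blowup and is therefore a non-polar exceptional curve of order zero), and $f_{X''Y}$ does not contract $C_{X''}$, so part (2) gives that $f_{X''Y}(p_{X''})$ is simple for $\eta_Y$. Since $f_{X''Y}(p_{X''})$ is by definition the incarnation in $Y$ of $\feta(p)$, and the simple/multiple status of an $\eta$-point is independent of incarnation, the corollary follows.

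The main delicacy is verifying, in the simple case, that the peripheral elaborations used to remove indeterminacy do not place $p_{X''}$ on two distinct \emph{poles} of $\eta_{X''}$, which would violate the hypothesis in Proposition \ref{simpletosimpleprop}(2); this is exactly the point at which one uses that peripheral elaborations create only non-polar exceptional components, so the unique pole through $p_{X''}$ is still $C_{X''}$.
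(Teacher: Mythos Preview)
Your proof is correct and follows the same route as the paper, which simply states that the corollary is an immediate consequence of Proposition \ref{simpletosimpleprop}. You have carefully spelled out the details the paper leaves implicit: using Proposition \ref{miscmapprop}(3) to ensure $C_X$ is not contracted, and Proposition \ref{indeterminacy} to move indeterminacy away from $\supp\div\eta$, so that the hypotheses of Proposition \ref{simpletosimpleprop} are met; your final remark about peripheral elaborations creating only non-polar exceptional curves is a correct (if slightly redundant) justification that $C_{X''}$ remains the unique pole through $p_{X''}$ in the simple case.
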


If $C$ is a simple pole of $\eta$, we can be more precise about the restriction $f_\eta:C\to \action C$.  Let $C_X$ be a smooth incarnation of $C$.  The form $\eta_X$ induces a meromorphic one form $\res_C\eta$ on $C_X$ via Poincar\'e residue:  in local coordinates $(x,y)$, chosen so that $C_X = \{x=0\}$,  we have $\eta = \frac{dx}{x}\wedge \tilde\eta$ for some one form $\tilde\eta$ whose restriction $\res_C\eta := \tilde\eta|_{C_X}$ is independent of both the elaboration and the choice of coordinate.  Moreover, $\res_C\eta$ is zero-free and the poles of $\res_C\eta$ are the same as the multiple points of $\eta$ along $C$.  It follows that either $C \cong \cp^1$ and $C$ contains one or two multiple points, or $C = C\cap \simplepolepts$ is a torus.  In all cases, we can choose a uniformizing parameter (i.e. a holomorphic universal covering) $z:\C\to C\cap\simplepolepts$ for the `simple part' of $C$ so that $\res_C\eta$ lifts to $dz$ on $\C$.

\begin{prop}
\label{residue}
For any $C\in\simples$, we have
$
f_\eta^* \res_{\action C}\eta = \frac{\delta}{m(f,C)}\res_C\eta.
$
So in terms of uniformizing parameters for the simple parts of $C$ and $f_\sharp C$, chosen to identify $\eta$ with $dz$, the map $\feta:C\cap\simplepolepts\to \action C\cap\simplepolepts$ is affine, given by $z\mapsto \frac{\delta}{m(f,C)}z + b$ for some $b\in \C$.
\end{prop}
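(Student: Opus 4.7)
The proof is a direct local computation: evaluate the Poincaré residue of $f_{XY}^*\eta_Y$ along a smooth incarnation of $C$ and compare it with the residue of $\delta\eta_X$. By Proposition \ref{miscmapprop}(3), I may fix elaborations $\pi_X:X\to S$ and $\pi_Y:Y\to S$ that smoothly incarnate $C$ and $\action C$ respectively, and for which $f_{XY}$ does not contract $C_X$. Since both sides of the claimed identity are meromorphic one-forms on $C$, it suffices to check it at a generic point $p_X\in C_X$; in particular I may assume that $p_X$ and $f_{XY}(p_X)$ are simple for $\eta_X$ and $\eta_Y$. Choose local coordinates $(x,y)$ near $p_X$ with $C_X=\{x=0\}$ and $(u,v)$ near $f_{XY}(p_X)$ with $(\action C)_Y=\{u=0\}$, and write $\eta_X=\frac{dx}{x}\wedge\tilde\beta$, $\eta_Y=\frac{du}{u}\wedge\tilde\omega$ with $\tilde\beta,\tilde\omega$ holomorphic; by definition $\res_C\eta=\tilde\beta|_{C_X}$ and $\res_{\action C}\eta=\tilde\omega|_{(\action C)_Y}$.

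Since $f_{XY}$ sends $C_X$ onto $(\action C)_Y$ with local multiplicity $m:=m(f,C)$, in these coordinates $u\circ f_{XY}=x^m g(x,y)$ with $g$ a unit near the generic point $p_X$. Pulling back gives
\begin{equation*}
f_{XY}^*\eta_Y \;=\; \left(m\frac{dx}{x}+\frac{dg}{g}\right)\wedge f_{XY}^*\tilde\omega.
\end{equation*}
The summand $\frac{dg}{g}\wedge f_{XY}^*\tilde\omega$ is holomorphic near $p_X$ and therefore contributes nothing to the Poincaré residue along $\{x=0\}$. Hence
\begin{equation*}
\res_{C_X}\bigl(f_{XY}^*\eta_Y\bigr) \;=\; m\cdot (f_{XY}^*\tilde\omega)\bigr|_{C_X} \;=\; m\cdot f_\eta^*\res_{\action C}\eta,
\end{equation*}
the last equality being the standard fact that pullback of a one-form commutes with restriction to a subvariety whose image is itself a subvariety.

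Now applying the invariance $f_{XY}^*\eta_Y=\delta\eta_X$ on the left gives $\delta\res_C\eta=m\cdot f_\eta^*\res_{\action C}\eta$, which rearranges to the stated identity. For the affine assertion, fix uniformizing parameters $z,w$ on $C\cap\simplepolepts$ and $\action C\cap\simplepolepts$ identifying $\res_C\eta$ with $dz$ and $\res_{\action C}\eta$ with $dw$. Then the residue identity reads $d(f_\eta^*w)=\frac{\delta}{m}\,dz$, so $f_\eta^*w=\frac{\delta}{m}z+b$ for some constant $b\in\C$, which is the required form. The only genuinely delicate point is the residue computation itself, in particular the observation that the holomorphic summand $\frac{dg}{g}\wedge f_{XY}^*\tilde\omega$ kills no residue; everything else is bookkeeping once the right generic point $p_X$ has been chosen.
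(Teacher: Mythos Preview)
Your proof is correct and follows essentially the same route as the paper: choose local coordinates in which $C_X$ and $(\action C)_Y$ are $\{x=0\}$, write the first component of $f_{XY}$ as $x^m g$ with $g$ a unit, expand $f_{XY}^*\eta_Y$, and observe that the $\frac{dg}{g}\wedge f_{XY}^*\tilde\omega$ term is holomorphic and hence contributes nothing to the residue. The paper phrases the residue extraction as ``multiply through by $x$ and restrict to $x=0$'' and leaves the affine consequence implicit, but the computation is the same.
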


\begin{proof}
Suppose $C$ appears in $X$ and $\action C$ in $Y$.   As above we choose local coordinates $(x,y)$ on both source and target that identify $C_X$ and $(\action C)_Y$ with $x=0$.  In such coordinates we have  $f(x,y) = (x^m g, h)$ where $g(0,y),h(0,y)\not\equiv 0$ and $m=m(f,C)$.  Hence
$$
\delta\frac{dx}{x}\wedge\tilde\eta = f_{XY}^*\eta_Y = m\frac{dx}{x}\wedge f^*\tilde\eta + \frac{dg}{g}\wedge f^*\tilde\eta.
$$
Multiplying through by $x$ and restricting to $x=0$ then gives
$$
\delta\res_C\eta = m f_\eta^*\res_{\action C} \eta.
$$
\end{proof}

\subsection{Action of $f$ on exceptional primes}

\begin{defn} An exceptional prime $E$ is \emph{tame} if $\action E$ is also exceptional and $\feta(\orig(E)) = \orig(\action E)$.  Otherwise $E$ is \emph{stray}.
\end{defn}

It will be important to work around stray exceptional primes as much as possible.  The next result allows us to incarnate all strays at once.

\begin{thm}
\label{straythm}
There are only finitely many stray exceptional primes.
\end{thm}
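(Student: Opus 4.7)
The plan is to reduce the statement to a local analysis at each simple $\eta$-point $p$ serving as an origin, exploiting the rigid local form that $f$ must take in view of the invariance $f^*\eta = \delta\eta$.

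First, I would invoke Propositions \ref{indeterminacy} and \ref{strongcontract} to pass to an elaboration $\pi_{X_0}\colon X_0 \to S$ such that $f_{X_0}$ has no indeterminacy on $\supp\div\eta_{X_0}$ and such that every irreducible non-polar curve contracted by $f_{X_0}$ maps to a simple $\eta$-point. Only finitely many such curves exist, so only finitely many simple $\eta$-points---call this finite collection $\mathcal{B}$---appear as their images.

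At a simple $\eta$-point $p\notin\mathcal{B}$ incarnated in $X_0$, I would take local coordinates $(x,y)$ near $p_{X_0}$ with the simple pole $C = \{x=0\}$ and $\eta_{X_0} = \frac{dx}{x}\wedge dy$, and dual target coordinates $(u,v)$ near $\feta(p)$ with $\action C = \{u=0\}$. Then $f$ locally has the form $(u,v)\circ f = (x^m g(x,y), h(x,y))$ where $m = m(f,C)$, $g(0,0)\ne0$, and invariance forces $h(0,y) = (\delta/m)y + b_0$ as in the proof of Proposition \ref{residue}. Parametrising $E_N(p) = \{y=0\}$ in source coordinates $(c,y)$ with $x = cy^N$, and using the target chart $s_k := u/(v-b_0)^k$ (whose $k$-th peripheral exceptional curve over $\feta(p)$ is $\{v=b_0\}$), a direct computation gives $s_k|_{\{y=0\}}\equiv 0$ for $k<mN$ and $s_{mN}|_{\{y=0\}}$ a non-constant function of $c$. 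Hence $\action E_N(p) = E_{mN}(\feta(p))$, which is exceptional with origin $\feta(p) = \feta(\orig(E_N(p)))$, so $E_N(p)$ is tame.

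The exceptional primes left to consider are either (i) those whose origin lies on a simple pole not smoothly incarnated in $X_0$, or (ii) those with origin in $\mathcal{B}$. For (i), since tameness is an intrinsic property of a prime, I would further elaborate to incarnate the relevant pole and re-run the above analysis there, reaching the same conclusion. For (ii), a local critical-divisor analysis at each $p\in\mathcal{B}$ shows that $E_N(p)$ can be stray only for $N$ below a bound depending on the multiplicity $k_p\ge 0$ of vanishing of $g(0,\cdot)$ at $p$ along $C$ and on the resolution depth of the contracted curve(s) through $p$; above this bound the same chart computation (with $mN$ replaced by $mN+k_p$) still yields $\action E_N(p) = E_{mN+k_p}(\feta(p))$, which is tame. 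Since $\mathcal{B}$ is finite and each bad origin contributes at most finitely many depths, the total number of strays is finite.

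The main technical obstacle is establishing the uniform depth bound at each $p\in\mathcal{B}$. I expect to handle it by resolving the local critical divisor of $f$ at $p$ via a bounded number of additional peripheral blowups over $p$, after which the normal form $(x^mg,h)$ with $g(0,0)\ne 0$ holds in the higher-depth peripheral chart and the direct computation applies; this leaves only a bounded low-depth stratum over $\mathcal{B}$ as a possible source of strays.
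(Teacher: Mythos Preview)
Your local computation at a simple point $p$ is essentially Lemma~\ref{straylem1} (the paper reaches the normal form $(x^m,y)$ by invoking a local monomialization theorem rather than by hand, but the content is the same). However, the finiteness-of-origins step has a concrete gap: you define $\mathcal{B}$ as the set of \emph{images} $f_{X_0}(C_{X_0})$ of contracted non-polar curves, and then assert that $g(0,0)\neq 0$ whenever the \emph{source} point $p\notin\mathcal{B}$. This implication is unjustified. The condition $g(0,0)=0$ means exactly that some non-polar branch of $\{g=0\}$ passes through $p_{X_0}$ in the source and is mapped into the target pole $\{u=0\}$, hence is contracted; so the relevant ``bad'' source set consists of simple points \emph{lying on} a contracted non-polar curve, an entirely different (though also finite) set from your $\mathcal{B}$. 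Your case split into $p\in\mathcal{B}$ versus $p\notin\mathcal{B}$ therefore does not achieve what you need, and case (i) (``further elaborate and re-run'') risks being circular, since each further elaboration alters both the contracted curves and the indeterminacy locus that your conditions on $X_0$ were meant to control.

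The paper avoids fixing a bad set in a single model altogether. After Lemma~\ref{straylem1} it argues: given any finite collection $\mathcal{E}$ of stray primes, choose \emph{strong} elaborations $X,Y$ (depending on $\mathcal{E}$) so that $\orig(E)$ appears in $X$ and $\feta(\orig(E))$ appears in $Y$ for every $E\in\mathcal{E}$, and so that $I(f_{XY})$ contains no multiple points. One checks that $\orig(E)_X\in I(f_{XY})$ for each stray $E$ (otherwise peripheral elaboration over $\orig(E)$ would exhibit $\action E$ as exceptional with origin $\feta(\orig(E))$, contradicting strayness), and then---this is the key point---bounds $\#I(f_{XY})\leq\lambda_2\,N$ where $N$ is the number of irreducible components of $f(I(f))$ on the original surface $S$. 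Since this bound is independent of $X,Y$ and hence of $\mathcal{E}$, the set of origins of stray primes is finite. It is this uniform indeterminacy bound, rather than an upfront description of the bad origins in a fixed $X_0$, that carries the argument.
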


Recall from the discussion following Definition \ref{exceptdef} that if $p$ is a simple point of $\eta$, then $E_j(p)$ denotes the exceptional prime of depth $j$ originating at $p$.

\begin{lem}
\label{straylem1}
Let $p\in C\in\simples$ be a simple point and $m = m(f,C)$ be the local multiplicity of $f$ along $C$.  There exists $J\in\N$ and $\tau\in\Z$ such that for all $j\geq J$, we have $\action E_j(p) = E_{mj+\tau}(\feta(p))$.
\end{lem}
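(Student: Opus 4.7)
I would prove this by setting up local coordinates matched to the residue normal form of Proposition \ref{residue}, and then computing the divisorial valuations attached to $E_j(p)$ and $\action E_j(p)$ directly. After passing to elaborations of source and target so that $C$ and $\action C$ are smoothly incarnated and $p \notin I(f_{XY})$ (possible by Proposition \ref{indeterminacy}), I would choose local coordinates $(x,y)$ near $p$ and $(X,Y)$ near $\feta(p)$ with $C = \{x=0\}$, $\action C = \{X=0\}$, $p = (0,0)$, $\feta(p) = (0,0)$, and $y,Y$ normalized so that $\res_C\eta = dy$ and $\res_{\action C}\eta = dY$. Since $C$ maps to $\action C$ with local multiplicity $m$, we may then write $f(x,y) = (x^m g(x,y), h(x,y))$ with $g(0,0) \neq 0$; Proposition \ref{residue} forces $h(0,y) = \frac{\delta}{m}y$, so $h(x,y) = \frac{\delta}{m}y + x h_1(x,y)$ for some holomorphic $h_1$.

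Next I would identify $E_j(p)$ with a divisorial valuation. The chain of peripheral blowups producing $E_j(p)$ is at each stage forced to be centered at the intersection of the newest exceptional curve with the strict transform of $C$, since this is the unique simple $\eta$-point available (cf.\ Proposition \ref{blowup} and the discussion following Definition \ref{exceptdef}). In the coordinate chart $x = u_j v_j^{\,j}$, $y = v_j$ arising after the $j$-th blowup, $E_j(p) = \{v_j = 0\}$, so its associated divisorial valuation is the monomial valuation determined by $v_{E_j(p)}(x) = j$ and $v_{E_j(p)}(y) = 1$. The analogous argument in the target gives $v_{E_k(\feta(p))}(X) = k$ and $v_{E_k(\feta(p))}(Y) = 1$ for all $k \geq 1$.

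Since $E_j(p)$ is non-polar, Corollary \ref{jcor} forces $m(f, E_j(p)) = 1$, so the valuation attached to $\action E_j(p)$ is $v_{E_j(p)} \circ f^*$. Substituting,
\[
v_{E_j(p)}(f^* X) = v_{E_j(p)}(x^m g) = mj,
\]
while for $j \geq 2$,
\[
v_{E_j(p)}(f^* Y) = v_{E_j(p)}\!\left(\tfrac{\delta}{m} y + x h_1\right) = \min\{1,\ j + v_{E_j(p)}(h_1)\} = 1,
\]
since $\tfrac{\delta}{m} \neq 0$ forces $v_{E_j(p)}(\tfrac{\delta}{m} y) = 1$ while $v_{E_j(p)}(x h_1) \geq j \geq 2$. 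Matching these weights against the target characterization identifies $\action E_j(p) = E_{mj}(\feta(p))$, so the lemma holds with $J = 2$ and $\tau = 0$.

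The main obstacle is the bookkeeping of step two: one must justify that the chain of blowups is uniquely determined by the depth and origin (so that the monomial valuation description really is intrinsic to $E_j(p)$), and that a primitive monomial valuation on the target with weights $(mj,1)$ corresponds to $E_{mj}(\feta(p))$ and no other divisorial valuation. Both reduce to the fact that the simple $\eta$-point on the newest exceptional curve at each stage is its unique intersection with the strict transform of the relevant pole, so the blowup chain is forced.
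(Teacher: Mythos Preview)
Your approach is sound and in some ways cleaner than the paper's: the paper invokes an external local monomialization result to reduce to $f_{XY}(x,y)=(x^m,y)$ and then blows up explicitly, whereas you use Proposition~\ref{residue} internally to get the normal form $f=(x^m g,\ \tfrac{\delta}{m}y+xh_1)$ and then work with divisorial valuations.  The tradeoff is that the paper's coordinate choice makes the image of $E_{J+\ell}(p)$ visible by inspection, while your route requires a more careful valuation computation than you have given.

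The gap is in ``matching these weights.''  Knowing $v_{\action E_j(p)}(X)=mj$ and $v_{\action E_j(p)}(Y)=1$ does \emph{not} determine a divisorial valuation centered at the origin: there are many non-monomial divisorial valuations with the same values on the two coordinates.  What you must check is that $v_{E_j(p)}\circ f^*$ is actually the \emph{monomial} valuation with weights $(mj,1)$, not merely a valuation agreeing with it on $X,Y$.  This does follow from your setup: in the chart $x=u_jv_j^{\,j}$, $y=v_j$ one has $f^*X=u_j^m v_j^{mj}\tilde g$ and $f^*Y=v_j\tilde h$ with $\tilde g(u_j,0)=g(0,0)\neq 0$ and (for $j\geq 2$) $\tilde h(u_j,0)=\tfrac{\delta}{m}\neq 0$; then for $R=\sum c_{A,B}X^AY^B$ the lowest-order-in-$v_j$ contributions from distinct $(A,B)$ carry distinct powers $u_j^{mA}$ and cannot cancel, giving $v_{E_j(p)}(f^*R)=\min\{mjA+B:c_{A,B}\neq 0\}$.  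Your ``main obstacle'' paragraph names the wrong worry (uniqueness of $E_{mj}$ among monomial valuations, which is immediate) rather than this one.

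A smaller point: your conclusion $\tau=0$ presumes that $X$ and $Y$ are \emph{strong} elaborations, but Proposition~\ref{indeterminacy} only guarantees $p_X\notin I(f_{XY})$ after a further \emph{peripheral} elaboration.  If $J_0$ (resp.\ $K_0$) peripheral blowups over $p$ (resp.\ $f_\eta(p)$) were already performed, your ``$E_j(p)$'' is really $E_{j+J_0}(p)$, and one obtains $\tau=K_0-mJ_0$ rather than $0$.  This does not affect the lemma, which only asserts existence of some $J,\tau$.
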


This fact is closely related to the last assertion of Theorem 3.1 in \cite{FaJo2}.

\begin{proof}
Let $X,Y$ be elaborations incarnating $p,q:=\feta(p)$ such that $p_X\notin I(f_{XY})$.   Let $C'_Y$ denote the simple pole of $\eta_Y$ containing $q_Y$.  We may assume (using e.g. Theorem 3.2 in \cite{cut1}) after further elaboration that  $f_{XY}$ is well-defined and monomial at $p_X$.  That is, there exist local coordinates about $p_X$ and $q_Y$ such that $C_X = C'_Y = \{x=0\}$ and in which $f_{XY}$ may be written $f_{XY}(x,y) = (x^a y^b, x^c y^d)$ for some $a,b,c,d\in\N$.   Our particular context allows us to be more precise.  I.e. $f_{XY}(C_X) = C'_Y$ implies $c=0$; and Proposition \ref{residue} tells us that $p_X$ is not critical for the restriction $f_{XY}:C_X \to C'_Y$, so $d=1$.  By further blowing up the target $Y$ at the point $(0,0)$ representing $p$, we may also arrange $b=0$.   The remaining exponent $a$ is then the local multiplicity $m$ of $f$ along $C$.  So without loss of generality $f_{XY}(x,y) = (x^m,y)$ in local coordinates.

Let $E_J(p)$ be the exceptional prime of maximal depth that appears in $X$ and $E_K(q)$ the exceptional prime of maximal depth that appears in $Y$.  Making $E_{J+\ell}(p)$, $\ell\geq 1$ appear in the source amounts to blowing up $p_X$, etc $\ell$ times.  This is a peripheral elaboration of $\eta_X$ which we denote by $\hat X$.  In local coordinates the incarnation of $E_{J+\ell}$ in $\hat X$ is given by $\{\hat y=0\}$ where $(x,y)=(\hat x{\hat y}^\ell,\hat y)$.  Then $f_{\hat X Y}(\hat x,\hat y)= (\hat x^m{\hat y}^{m\ell},\hat y)$, from which it follows that $\action E_{J+\ell}$ is the exceptional prime with depth $K + \ell m$.
\end{proof}

\begin{cor}
\label{straycor1}
For any $p\in\simples$, let $J\in\N$ be sufficiently large.  Then $E_j(p)$ is tame for $j\geq J$.  Let $\pi_X$ be an elaboration such that $E_J(p)$ is the deepest exceptional prime that originates from $p$ and appears in $X$; let $\pi_Y$ be an elaboration such that $\action E_J(p)$ is the deepest exceptional prime that originates from $\feta(p)$ and appears in $Y$.  Then $p_X\notin I(f_{XY})$ and there is no $j\leq J$ such that $f_{XY}$ contracts the incarnation of $E_j(p)$ to $\feta(p)_Y$.
\end{cor}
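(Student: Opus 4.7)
The plan is to pick $J$ large enough that, first, Lemma \ref{straylem1} applies for all $j \geq J$, and second, the corresponding target depth $K := mJ + \tau$ is large enough to incarnate the $\action$-image of every $E_j(p)$, $1 \leq j \leq J$, that happens to be tame. Here $m := m(f,C)$ for $C$ the simple pole containing $p$, $q := \feta(p)$, and $\tau$ is from Lemma \ref{straylem1}. Tameness of $E_j(p)$ for $j \geq J$ is then immediate. To arrange the second clause, I invoke Lemma \ref{straylem1} to produce $J^*$ with $\action E_j(p) = E_{mj+\tau}(q)$ for $j \geq J^*$; each of the finitely many tame $E_j(p)$ with $j < J^*$ has $\action E_j(p) = E_{\delta_j}(q)$ for some fixed depth $\delta_j$, and I choose $J \geq J^*$ so that $mJ+\tau$ dominates all of these $\delta_j$.

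For $p_X \notin I(f_{XY})$, I would rerun the monomial coordinate computation from the proof of Lemma \ref{straylem1}. That proof produces elaborations $X_0, Y_0$ with $E_{J^*}(p), E_{K^*}(q)$ (where $K^* = mJ^* + \tau$) deepest over $p, q$, in which local coordinates at $p_{X_0}, q_{Y_0}$ realize $C_{X_0} = C'_{Y_0} = \{x=0\}$, $E_{J^*}(p) = \{y=0\}$, and $f_{X_0Y_0}(x_0,y_0) = (x_0^m, y_0)$. For any $\pi_X, \pi_Y$ as in the corollary, the hypothesis that $E_J(p)$ and $E_K(q)$ are the deepest exceptional primes over $p, q$, together with the chain structure of successive peripheral blowups at the tips, forces the local structure of $X$ near $p_X$ (resp., $Y$ near $q_Y$) to be exactly that of an iterated blowup of $X_0$ (resp., $Y_0$) at the successive tips to additional depth $J - J^*$ (resp., $K - K^* = m(J-J^*)$). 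Composing the monomial map with the coordinate changes $(x_0,y_0) = (x y^{J-J^*}, y)$ and $(u_0,v_0) = (u v^{m(J-J^*)}, v)$ and solving for $(u,v)$ yields $f_{XY}(x,y) = (x^m, y)$ in local coordinates at $p_X, q_Y$, which is holomorphic at the origin $p_X$.

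For the non-contraction statement, I would fix any $1 \leq j \leq J$ and use Remark \ref{beyondelab} to determine $f_{XY}(E_j(p))$ from the prime $\action E_j(p)$. If $E_j(p)$ is tame then $\action E_j(p) = E_{j'}(q)$ with $j' \leq K$ by our choice of $J$, so $E_{j'}(q)$ is incarnated in $Y$ and $f_{XY}$ sends $E_j(p)$ onto that curve rather than contracting it at all. If $E_j(p)$ is stray then either $\action E_j(p)$ is a non-exceptional non-polar prime whose strict transform in $Y$ is a curve, or $\action E_j(p) = E_{j'}(q')$ for a simple $\eta$-point $q' \neq q$, in which case its representative in $Y$ sits at the representative of $q'$, which is distinct from $q_Y$ since distinct incarnated simple $\eta$-points have distinct simple representatives and any non-incarnated simple $\eta$-point is represented by a multiple point of $\eta_Y$. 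Either way $f_{XY}(E_j(p)) \neq q_Y$. The main obstacle is the transfer of the monomial local form from the specific elaborations of Lemma \ref{straylem1} to arbitrary $\pi_X, \pi_Y$ meeting the hypotheses; this ultimately reduces to the fact that the chain of primes over $p$ near its tip in any such elaboration is determined up to biholomorphism by its depth.
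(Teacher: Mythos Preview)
Your approach is the same as the paper's: invoke Lemma~\ref{straylem1} to get the monomial local form $(x,y)\mapsto(x^m,y)$ and the depth formula $\action E_j(p)=E_{mj+\tau}(q)$ for $j$ large, then enlarge $J$ so that $K=mJ+\tau$ exceeds the depth of $\action E_j(p)$ for every tame $E_j(p)$ with $j<J$. The paper compresses this into two sentences, taking for granted both the transfer of the monomial normal form to arbitrary $\pi_X,\pi_Y$ meeting the depth hypotheses and the harmlessness of stray $E_j(p)$ in the non-contraction clause; you have correctly spelled out both of these points. Your justification that the chain of peripheral blowups over a simple point determines the local picture near $p_X$ (so the monomial form persists), and your case analysis showing that a stray $\action E_j(p)$ is represented in $Y$ either by a curve or by a point distinct from the simple point $q_Y$, are exactly the content the paper's terse proof leaves implicit.
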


\begin{proof}
All but the very last conclusion follows if we take $J$ at least as large as in Lemma \ref{straylem1}.  Let $J'\geq J$ be larger than
the maximum depth of $\action E_j(p)$ over all $j< J$.  Replacing $J$ with $J'$ guarantees that the last conclusion holds, too.
\end{proof}

\proofof{Theorem \ref{straythm}}
Corollary \ref{straycor1} tells us that only finitely many stray exceptional primes originate from any given $\eta$-point.  We complete the proof of Theorem \ref{straythm} by showing that there are only finitely many $\eta$-points from which stray exceptional primes can originate.  So let $\mathcal{E}\subset\primes$ be a finite (but a priori arbitrarily large) set of stray exceptional primes.

Take $\pi_Y$, $\pi_X$ to be strong elaborations of $\eta$ such that for all $E\in\mathcal{E}$, we have that $\orig(E)$ appears in $X$ and that $\feta(\orig(E))$ appear in $Y$.  Using Proposition \ref{indeterminacy}, we can further assume that $I(f_{XY})$ contains no multiple points for $\eta_X$.  Under these circumstances, we claim first that $\orig(E)$ is represented by a point in $I(f_{XY})$ for all $E\in \mathcal{E}$, and second that $\# I(f_{XY})$ is bounded above by a number that depends only on $f$ and not on our choices of $X$ and $Y$.  Together, these claims imply that stray exceptional primes originate from only finitely many $\eta$-points, so it remains only to justify both of them.

To prove the first claim, suppose to the contrary that $E\in\mathcal{E}$ and $p:=\orig(E)$, but $p_X\notin I(f_{XY})$.  Then $f_{XY}(p_X) = q_Y$ where $q=\feta(p)$.  Hence if $X'$ is a peripheral elaboration of $X$ at $p$ that incarnates $E$, we have $f_{X'Y}(E_{X'}) = q_Y$.  This implies that $\action E$ is exceptional and that $\orig(\action E) = \feta(\orig(E)) = q$, contradicting the assumption that $E$ is stray.

For the second claim, let $p_X\in I(f_{XY})$ be any point and $C_Y\subset f_{XY}(p_X)$ be any irreducible component of the image.  Then Proposition \ref{indeterminacy} tells us that $C_Y$ is non-polar.  Since $\pi_Y$ contracts only poles of $\eta_Y$, it follows that $\pi_Y(C_Y)$ is a non-polar irreducible curve contained in $f(\pi_X(p_X))$.  That is $f_{XY}(I(f_{XY}))$ is a union of at most $N$ irreducible curves $C_Y$, where $N$ is the number of irreducible components of $f(I(f))$.  Each curve $C_Y$ is in turn the image of at most $\lambda_2$ points in $I(f_{XY})$, so we conclude that $\# I(f_{XY}) \leq \lambda_2N$.
\qed

\begin{cor}
\label{straycor2}
Let $X$ be any elaboration of $\eta$.  By further elaborating $\eta_X$ we may arrange that
\begin{enumerate}
\item all stray exceptional primes appear in $X$;
\item $f_X$ contracts only the incarnations of poles and exceptional primes;
\item the image of any indeterminate point $p_X\in \supp\div\eta_X$ likewise contains only incarnations of poles and exceptional primes; and
\item if the point $p_X$ in (3) is simple for $\eta_X$, then each irreducible component of $f_X(p_X)$ incarnates an exceptional prime originating at $\feta(p)$;
\end{enumerate}
These conditions continue to hold for any further elaboration $Y\to X$.
\end{cor}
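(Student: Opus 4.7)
The plan is to prove the corollary by a sequence of four finite elaborations of $X$, each addressing one of the stated conditions, and then verify that the joint conditions persist under any further elaboration $Y\to X$.

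First, to achieve (1), I would invoke Theorem~\ref{straythm}: the set of stray exceptional primes is finite. Each is incarnated by a finite sequence of blowups at its origin, so a finite elaboration suffices. Next, for (2), observe that a curve $C_X$ is contracted by $f_X$ exactly when $\action C$ fails to incarnate as a curve in $X$. The polar and exceptional cases are permitted; otherwise $C$ is non-polar and non-exceptional, so it incarnates in $S$ as a non-polar curve that $f$ must contract, and there are only finitely many such $C$. By Corollary~\ref{jcor}, $\action C$ is non-polar; if it were non-exceptional it would already incarnate as a curve in $X$, contradicting the contraction. Hence $\action C$ is exceptional, and I would incarnate each such $\action C$ by further blowups.

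For (3) at multiple indeterminate points, I would apply Corollary~\ref{nononpolars} to obtain a strong elaboration under which the image of every multiple $\eta$-point contains no non-polar curves, so its components are all polar. For the simple-point parts of (3) and (4), the key tool is Corollary~\ref{straycor1}. The components of $f_X(p_X)$ at a simple indeterminate $p_X$ are of the form $(\action E_j(p))_X$, where $E_j(p)$ ranges over the exceptional primes appearing in the chain above $p$ used to resolve the indeterminacy, and only those $j$ for which $\action E_j(p)$ incarnates as a curve in $X$ contribute. Corollary~\ref{straycor1} supplies a threshold $J_p$ such that $E_j(p)$ is tame for all $j\geq J_p$, i.e., $\action E_j(p)$ is an exceptional prime originating at $\feta(p)$. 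By further elaborating to incarnate $E_1(p),\dots,E_{J_p}(p)$ for each of the finitely many simple indeterminate points $p$, any remaining indeterminacy is resolved by blowups introducing only $E_j(p)$ with $j>J_p$, so each image component is automatically an exceptional prime originating at $\feta(p)$.

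Finally, stability under further elaboration: (1) and (2) are immediate, since additional incarnations preserve both the appearance of stray primes and the non-contraction of non-polar non-exceptional curves. For (3) and (4), the only way new indeterminate points in $\supp\div\eta_Y$ can arise is either (a) on multiple points, where strong elaborations preserve the polar-image conclusion of Corollary~\ref{nononpolars}, or (b) at preimages of simple $\eta$-points that have been further blown up; by Proposition~\ref{simpletosimpleprop} such preimages are themselves simple $\eta$-points $r$ with $\feta(r)$ equal to the blown-up point $p$, and the image components at $r$ are then precisely the new exceptional primes originating at $p=\feta(r)$, as required. The main obstacle is this stability check together with Step~4, where one must carefully reconcile the finite threshold $J_p$ from Corollary~\ref{straycor1} with the indeterminate point chain produced by successive resolutions; getting both strayness and the mutating identity of indeterminate points in $\supp\div\eta$ under control is the technical heart of the argument.
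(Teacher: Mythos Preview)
Your handling of (1) and (2) matches the paper's argument closely.  The divergence is in (3) and (4), where the paper takes a different and more robust route.  Rather than analyzing the image $f_X(p_X)$ directly via the chain $E_j(p)$ and the threshold in Corollary~\ref{straycor1}, the paper invokes Proposition~\ref{indeterminacy} once to pass to an elaboration $X''\to X$ for which $I(f_{X''X})\cap\supp\div\eta_{X''}=\emptyset$.  Then for \emph{any} further elaboration $Y\to X''$ and any $p_Y\in\supp\div\eta_Y$, the image $f_Y(p_Y)$ must lie in the fiber of $Y\to X$ over a single point, so every component of $f_Y(p_Y)$ is contracted by the elaboration $Y\to X$ and is therefore polar or exceptional.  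This gives (3) uniformly, and (4) then drops out of (1) and (3): a non-polar component $C$ of $f_X(p_X)$ is exceptional by (3), hence $C=\action C'$ for some exceptional $C'$ originating at $p$ that does not appear in $X$; by (1) this $C'$ is tame, so $\orig(C)=\feta(p)$.

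Your direct approach has a gap.  The assertion that ``the components of $f_X(p_X)$ at a simple indeterminate $p_X$ are of the form $(\action E_j(p))_X$'' is not justified.  Resolving the indeterminacy at $p_X$ via the graph $\Gamma\to X$ generally requires anti-elaboration blowups (centered at points of order $0$ on the exceptional chain, producing curves of \emph{positive} order in $\eta_\Gamma$), not just the peripheral chain $E_j(p)$.  Such a curve $C'_\Gamma$ with $\ord(C'_\Gamma,\eta_\Gamma)>0$ can map under $f_{\Gamma X}$ to a non-polar curve $C_X$ of order $0$ that already appears in $S$, i.e.\ a non-exceptional component of $f_X(p_X)$.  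Conditions (1) and (2) do not rule this out, since (2) constrains only curves \emph{contracted} by $f_X$, not components of images of indeterminate points.  The same issue undermines your stability argument for (3)--(4) under further elaboration: new simple indeterminate points need not arise solely as $\feta$-preimages of blown-up points, and their images need not consist only of newly created exceptional curves.  The paper's use of Proposition~\ref{indeterminacy} sidesteps all of this by forcing every image component at a polar indeterminate point to come from the elaboration itself.
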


\begin{proof}
Theorem \ref{straythm} guarantees an elaboration $X'$ in which all stray exceptional primes appear.  We can apply Proposition \ref{miscmapprop} to obtain an elaboration $X\to X'$ such that $f_{X'X}$ contracts no curves.  Applying the same elaboration to the source, we observe that $f_X$ can only contract incarnations of primes that do not appear in $X$.  Such primes are exceptional or polar, so (1) and (2) are satisfied by $X$ and clearly also for any further elaboration $Y\to X$.

We arrange condition (3) by using Proposition \ref{indeterminacy} to elaborate the target $X''\to X$ of $f_X$ so that $I(f_{XX''})\cap \supp\div\eta_Z = \emptyset$.  If $Y$ is any further elaboration of $X''$, and $p_Y\in \supp\div\eta_Y$, then any curve in the image $f_Y(p_Y)$ must be contracted by the combined elaboration $Y\to X'' \to X$.  The irreducible components of $f_Y(p_Y)$ therefore incarnate only poles and exceptional primes.  Redefining $X=X''$ then gives us condition (3).

Condition (4) is a consequence of conditions (1) and (3).  That is, each irreducible component $C_X$ of $f_X(p_X)$ incarnates an exceptional prime $C$ by condition (3) and Proposition \ref{indeterminacy}.  Hence $C = \action C'$ for some exceptional prime $C'$ originating at $p$ but not appearing in $X$.  From (1) we know that $C'$ is tame, so $C$ originates from $\feta(p)$.
\end{proof}

\subsection{Elaboration of maps on rational surfaces}

Here again we look at the particularly interesting case $S=\cp^2$.  
Recall that when $-\div\eta$ is smooth, there is a unique prime $C\in\poles$. The pole $C$ is simple and contains no multiple $\eta$-points.  As a curve $C$ is equivalent to its incarnation in $\cp^2$ and therefore a torus.

\begin{prop}
\label{mapofsmooth}
Suppose that $-\div\eta$ is smooth.  Then the unique prime $C\in \poles$ for $\eta$ satisfies $\action^{-1}(C) = C$.  Moreover, $\feta:C\to C$ is an unbranched $\left(\frac{|\delta|}{m(f,C)}\right)^2$-to-1 cover, and the topological degree of $f$ is $\lambda_2 = |\delta|^2/m(f,C)$.  Hence $|\delta|^2$ is an integer divisible by both $m(f,C)^2$ and $\lambda_2$.
\end{prop}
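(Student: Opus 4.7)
The plan is to establish the four claims in order. The first, $\action^{-1}(C)=\{C\}$, is immediate from Corollary \ref{jcor}: a prime is polar if and only if its image under $\action$ is, so $\poles=\{C\}$ forces $\action^{-1}(C)\subset\poles=\{C\}$, while $\action C=C$ by Corollary \ref{invarianceofpoles}.

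For $\feta\colon C\to C$, I would use that $C$ is a smooth cubic and hence an elliptic curve, so the residue $\res_C\eta$ is a nowhere vanishing holomorphic one-form on $C$. Fixing a holomorphic universal cover $z\colon\C\to C$ with $z^*\res_C\eta=dz$, Proposition \ref{residue} tells us that $\feta$ lifts to an affine map $z\mapsto (\delta/m)z+b$, where $m:=m(f,C)$. Since $\delta\neq 0$, this lift is a biholomorphism of $\C$, so $\feta$ is a local biholomorphism of the torus $C$, hence an unbranched covering of degree $|\delta/m|^2=(|\delta|/m)^2$.

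The computation of $\lambda_2$ is the main work. I count preimages of a very generic $p\in C$, and first claim that every such preimage lies in $C$. The key input is $\div f^*\eta=\delta\,\div\eta=-C$. In particular, no non-contracted curve $C'\neq C$ can map onto $C$: Proposition \ref{jformula} would force
$$\ord(C',f^*\eta)+1=m(f,C')(\ord(C,\eta)+1)=0,$$
while $\div f^*\eta=-C$ and $C'\neq C$ give $\ord(C',f^*\eta)=0$. Now choose $p$ to avoid both the finitely many image points of curves contracted by $f$ and the finite set $C\cap f(I(f))$; the latter is finite because Proposition \ref{indeterminacy}, combined with the fact that the smoothness of $-\div\eta$ leaves $\eta$ with no multiple points, says that no component of $f(I(f))$ is polar. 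For such $p$, an off-$C$ preimage $q$ would lie on a local analytic branch of $f^{-1}(C)$, extending to an irreducible algebraic curve $L\subset\cp^2$ with $f(L)\subset C$; then $L$ is either non-contracted mapping onto $C$ (ruled out above) or contracted to $f(q)=p$ (excluded by our choice of $p$). So the preimages of $p$ are exactly the $(|\delta|/m)^2$ points of $\feta^{-1}(p)\subset C$.

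It remains to compute the local topological degree of $f$ at each such preimage $q$. In local coordinates $(u,v)$ near $q$ and $(x,y)$ near $p$ with $C=\{v=0\}$ in the source and $C=\{y=0\}$ in the target, the condition $f(C)\subset C$ together with $m(f,C)=m$ yields $f(u,v)=(f_1(u,v),v^m g(u,v))$ with $g(u,0)\neq 0$; for generic $(x_0,y_0)$ near $(0,0)$ with $y_0\neq 0$ the $y$-equation has $m$ solutions $v_j$ while the $x$-equation pins $u$ down uniquely for each, since $\feta$ is a local biholomorphism. Summing contributions yields $\lambda_2=m\cdot(|\delta|/m)^2=|\delta|^2/m$, and the divisibility assertions follow from this together with $\deg\feta=(|\delta|/m)^2\in\N$. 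The main obstacle is the third-paragraph argument that no preimage of $p$ lies off $C$: one must simultaneously rule out non-contracted curves mapping onto $C$, contracted curves landing at $p$, and points of indeterminacy of $f$.
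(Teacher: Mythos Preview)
Your argument is correct and follows essentially the same route as the paper's proof, only with much more detail supplied. The paper dispatches the first assertion by ``total invariance of $\poles$,'' the second by Proposition \ref{residue}, and the $\lambda_2$ computation by noting that a general point near a simple $p_X\in C_X$ has $m(f,C)$ preimages near each preimage of $p_X$; your local-coordinate count is precisely a fleshing out of this last sentence, and your careful exclusion of off-$C$ preimages makes explicit what the paper leaves implicit.

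One small slip: you write ``$\div f^*\eta=\delta\,\div\eta=-C$,'' but multiplying a meromorphic form by $\delta\in\C^*$ does not scale its divisor. The correct chain is $\div f^*\eta=\div(\delta\eta)=\div\eta=-C$. Your subsequent use of $\ord(C',f^*\eta)=0$ for $C'\neq C$ is unaffected.
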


\begin{proof}
The first assertion is immediate from total invariance of $\poles$ by $\action$.  The second proceeds from Proposition \ref{residue}.  The formula for $\lambda_2$ follows from the second and the fact that in any elaboration $X$, if $p_X\in C_X$ is any simple point and $q_X\in X$ is a general point near $p_X$, then there are $m(f_X,C_X)$ preimages of $q_X$ near each preimage of $p_X$.
\end{proof}

Recall now that in the toric case, when $\eta = \frac{dx\wedge dy}{xy}$, each pole $C\in\primes$ is simple and corresponds to a rational ray in $\R^2$.  Moreover, $C$ is equivalent as a curve to $\cp^1$, with $C\cap \simplepolepts \cong \C^*$.  Hence $\res_C \eta = \frac{dz}{z}$.  In a little more detail, Theorem {\toricact} may be restated as follows.

\begin{thm}
\label{cover}
Suppose $\eta = \frac{dx\wedge dy}{xy}$.  Then for each $C\in\poles$, the restriction $\feta:C\to\action C$ is given by $\frac{|\delta|}{m(f,C)}$-fold cover branched at the multiple points of $C$.  Moreover, $\action:\poles\to\poles$ is given by a continuous, piecewise linear map $T_f:\R^2\to\R^2$ such that $T_f(\Z^2)\subset \Z^2$, and $T_f$ restricts to a $\frac{\lambda_2}{|\delta|}$-to-1 cover of $\R^2-\{\origin\}$ by itself.  Hence $\delta$ is an integer divisible by $m(f,C)$ and dividing $\lambda_2$.
\end{thm}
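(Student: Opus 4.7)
My plan is to prove the theorem's four assertions in three steps: (I) the cover-degree claim for $\feta:C\to\action C$ from Proposition \ref{residue}; (II) the piecewise-linear structure of $T_f$ from a local monomial analysis at multiple points; and (III) the cover degree on $\R^2$ (together with the remaining divisibility) from a local-to-global preimage count.

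Step (I) is straightforward. Each $C \in \poles$ is simple in the toric case, and $C \cap \simplepolepts \cong \C^*$ carries the residue $\res_C \eta = dw/w$. Choosing the uniformizing parameter $w = e^z$ identifies $\res_C\eta$ with $dz$, so Proposition \ref{residue} gives $\feta(z) = (\delta/m(f,C))z + b$, which descends to $w \mapsto \alpha w^{\delta/m(f,C)}$ on $\C^*$. Since $\feta:C\to\action C$ extends holomorphically from $\C^*$ to $\cp^1$, the exponent $\delta/m(f,C)$ must be a nonzero integer --- giving $m(f,C)\mid\delta$ and showing the restriction is a $|\delta|/m(f,C)$-fold cover of $\cp^1$ branched at the two multiple points $w \in \{0,\infty\}$.

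For Step (II), I use the identification (from the appendix) of $\poles$ with primitive vectors $v_C \in \Z^2$, noting that adjacent pairs $(v_C, v_{C'})$ meeting at a multiple point in some strong elaboration form $\Z$-bases of $\Z^2$. I will define $T_f(v_C) := m(f,C)\, v_{\action C}$ and extend linearly on each $2$-cone. To verify consistency, fix a multiple point $p \in C \cap C'$; by Proposition \ref{indeterminacy} and Corollary \ref{straycor2}, I may pass to strong elaborations of source and target such that $p \notin I(f)$, no non-polar curve is contracted near $p$, and $q := f(p)$ is a multiple point of the target lying in $\action C \cap \action C'$. A local analysis in toric coordinates --- writing $f^*u = g\,x^a y^b$, $f^*v = h\,x^c y^d$ with $g,h$ nonvanishing at $p$, using non-contraction of $C,C'$ to force $b = c = 0$ and $f^*\eta = \delta\eta$ to force $\delta = ad$ together with constancy of $g(x,0)$ and $h(0,y)$ --- then matches with Step (I) to identify $a = m(f,C),\, d = m(f,C')$. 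Consequently $T_f|_{\sigma_p}$ is the diagonal linear map with entries $m(f,C),m(f,C')$ in the $\Z$-bases $(v_C,v_{C'}),(v_{\action C},v_{\action C'})$; it has determinant $\pm\delta$ and sends $\Z^2 \cap \sigma_p$ into $\Z^2$. Assembled globally, $T_f:\R^2 \to \R^2$ is a continuous piecewise linear map with $T_f(\Z^2)\subset\Z^2$.

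For Step (III), I claim $\deg T_f = \lambda_2/|\delta|$. Since each source $2$-cone $\sigma_p$ is mapped bijectively by $T_f$ onto the target $2$-cone corresponding to $q = f(p)$, the topological degree of $T_f$ on $\R^2 \setminus \{\origin\}$ equals the number $k$ of source multiple points $p$ with $f(p) = q$, for a fixed target multiple point $q$ (in a sufficiently fine elaboration). From Step (II), $f$ has local degree $ad = |\delta|$ near each such $p$, so $|\delta|$ preimages of a generic nearby point $q'$ accumulate at $p$. After further elaboration removing polar indeterminacy (Proposition \ref{indeterminacy}) and, if necessary, un-contracting any exceptional curves of $X$ that $f_X$ maps to $q$ (Proposition \ref{miscmapprop}(3)), all $\lambda_2$ preimages of $q'$ accumulate at source multiple points, giving $\lambda_2 = k|\delta|$. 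Hence $\deg T_f = \lambda_2/|\delta|$ is a positive integer, yielding $|\delta|\mid\lambda_2$ and the stated covering property. The hard part here will be the final accounting: rigorously showing that, after sufficient elaboration, no preimage of $q'$ in $(\C^*)^2$ is lost to a contracted non-polar curve whose image contains $q$, so that $\lambda_2 = k|\delta|$ is sharp rather than only a lower bound. This will require careful use of Proposition \ref{indeterminacy} and Corollary \ref{straycor2} to control $f$'s behavior near poles and exceptional curves.
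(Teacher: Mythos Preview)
Your Step~(I) is correct and essentially matches the paper's opening move. Your Steps~(II) and~(III), however, take a genuinely different and harder route than the paper does, and the gap you flag in Step~(III) is real.

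For the construction of $T_f$, the paper does not work locally at multiple points or patch cone-by-cone. Instead it writes $f$ once in affine coordinates on $(\C^*)^2$ as $f = (P_1/P_3,\,P_2/P_3)$ with polynomials $P_j$, and observes that for any $u=(a,b)\in\Z^2$ one has $P(w\gamma^u(z)) \sim c(w)\,z^{\nu(P)}$ where $\nu(P) = \min_{c_{ij}\neq 0}(ia+jb)$. This immediately gives $T_f(a,b) = (\nu(P_1)-\nu(P_3),\,\nu(P_2)-\nu(P_3))$ as a manifestly continuous, piecewise linear, integral function of $(a,b)$, with no local analysis or patching required. Your approach via $T_f(v_C) := m(f,C)\,v_{\action C}$ and local monomialization at each multiple point can be made to work, but it is considerably more labor and you must still verify that the resulting map is independent of the chosen elaboration.

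For the covering degree, the paper again avoids your difficulty by counting near a \emph{generic} point $q_0$ on a target pole $(\action C)_Y$ rather than at a multiple point. Near each source pole $C$ with $\action C$ fixed, $f$ is locally $m(f,C)$-to-$1$ and $\feta:C\to\action C$ is $|\delta|/m(f,C)$-to-$1$ by Step~(I), so a small neighborhood of $C_X$ maps $|\delta|$-to-$1$ onto a neighborhood of $(\action C)_Y$. Since non-polar primes map to non-polar primes (Corollary~\ref{jcor}) and images of non-multiple indeterminate points are non-polar (Proposition~\ref{indeterminacy}), a generic $q_0$ on the target pole has all $\lambda_2$ nearby preimages accounted for by the source poles, giving $|\action^{-1}(\action C)| = \lambda_2/|\delta|$ directly. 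Continuity and density of rational rays then upgrade this to the covering statement for $T_f$. Your multiple-point count instead forces you to rule out preimages escaping to indeterminate points $p_0 \in (\C^*)^2$ whose image curves happen to pass through $q$; this is doable (choose $q$ generic among multiple points after refining $Y$), but the paper's choice of counting locus sidesteps the issue entirely.
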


\begin{proof}
The first assertion proceeds from Proposition \ref{residue} and the fact that $C\cap\simplepolepts \cong \C^*$.  If $\pi_X$, $\pi_Y$ are elaborations smoothly incarnating $C,\action C\in\poles$, then $f_{XY}$ is locally $m(f,C)$ to $1$ in a neighborhood of a general point $p_X\in C_X$.  Hence for any small neighborhood $U_Y\supset C_Y$, the first assertion implies that there is a corresponding neighborhood $U_X\supset C_X$, such that $f_{XY}$ is $|\delta|$-to-1 from $U_X$ onto $U_Y$.  It follows that $\action:\poles\to\poles$ is $\frac{\lambda_2}{|\delta|}$-to-$1$.

To further understand $\action$, we rely on the discussion of toric surfaces in the appendix.  Given $C\in \poles$ corresponding to a pair $u = (a,b)\in \Z^2$, we fix strong elaborations $\pi_X$ and $\pi_Y$ incarnating $C$ and $\action C$, respectively.  Then for general $w = (\alpha,\beta) \in (\C^*)^2$ (i.e. chosen so $w\gamma^u(\C^*)$ does not land on $I(f_{XY})\cap C_X$), we have that $f_{XY}(w\gamma^u(\C^*))$ lands on $(\action C)_Y$.

In affine coordinates $(x,y)\in (\C^*)^2$,  the map $f_{XY} = (P_1/P_3,P_2/P_3)$, is given by polynomials $P_j(x,y)$.   And for any polynomial $P(x,y) =\sum_{i,j} c_{i,j} x^i y^j$, one computes that
$$
P(w \gamma^u(z)) \sim c(\alpha,\beta) z^{\nu(P)}
$$
where $\nu(P) = \min_{c_{i,j}\neq 0} ia + jb$ is a continuous piecewise linear function of $a,b$ that takes integer values when $a,b\in\Z$.  So for $z$ small
$$
f_{XY}(w\gamma^u(z)) \sim (c_1(\alpha,\beta) z^{\nu(P_1) - \nu(P_3)}, c_2(\alpha,\beta) z^{\nu(P_2)-\nu(P_3)}),
$$
Thus the action of $f$ on $\poles$ is induced by a continuous piecewise linear and integral map $T_f:\R^2\to \R^2$.  In particular, the finitely many rays along which $T_f$ fails to be linear all have rational slopes.  So from continuity of $T_f$, from density of rational rays in $\R^2$, and from the fact that $\action$ is $\frac{\lambda_2}{|\delta|}$-to-1, we infer that the restriction of $T_f$ to $\R^2-\{0\}$ is a cover of degree $\frac{\lambda_2}{|\delta|}$.
\end{proof}

If $f:(x,y)\to (x^ay^b,x^cy^d)$ is monomial, the induced action $\action:\poles\to\poles$ corresponds to the linear operator $T_f:\R^2\to\R^2$ with matrix $\left(\begin{matrix} a & b \\ c & d\end{matrix}\right)$.

If $f$ and $g$ both preserve $\frac{dx\wedge dy}{xy}$, then $T_{f\circ g} = T_f\circ T_g$.  For
$f$ birational this implies $T_{f^{-1}} = T_f^{-1}$.   Thus $T_f$ is a homeomorphism and $T_f(\Z^2) = \Z^2$.  I.e. $T_f$ is a `piecewise linear automorphism of $\Z^2$' in the terminology of \cite{Us}.
For the particular example $f:(x,y)\mapsto \left(y,\frac{1+y}{x}\right)$ from 2.1.2, one computes that
$T_f(i,j) = (j,\min\{-i,j-i\})$.   The birational maps described by \eqref{idaction} preserve the poles of $\frac{dx\wedge dy}{xy}$ component-wise and act biholomorphically near each of the three points where two poles meet.  Hence $T_f = \id$ for these $f$.

In all cases above, invertible or not, $T_f$ is a homeomorphism of $\R^2$ (though not an automorphism of $\Z^2$).  That is, as we pointed in in 2.1.2, $\delta$ not only divides $\lambda$, but is equal (up to sign). It would be interesting to know whether this is always true for maps preserving $\frac{dx\wedge dy}{xy}$.

For the sake of completeness, we recall that in the Euclidean case, when $\eta = dx\wedge dy$, we have $\simples\neq\poles$.  Moreover each $C\in \simples$ is equivalent to $\cp^1$ and contains exactly one multiple point.  Hence the simple points in $C$ may be identified with $\C$ in such a way that the restriction $\res_C\eta$ becomes $dz$.  Therefore Proposition \ref{residue} implies that the map $\feta:C\to\action C$ is always an isomorphism.

\section{Algebraic stability and Corrigibility}
\label{stability}

We now address the issue of corrigibility, defined in \S \ref{introduction}, for a rational map $f: S\to S$ preserving a zero-free two-form $\eta$.  The first step in this direction is to prove Theorem {\sbe}, which can be stated more precisely as follows.

\begin{thm}
\label{stablebyelaboration}
Suppose that $X$ is an elaboration of $\eta$ and that $\pi:Y\to X$ is a modification such that $f_Y$ is algebraically stable.  Let $\pi = \pi_{elab}\circ \pi_{anti}$ be the decomposition into elaboration $\pi_{elab}:\check Y\to X$ and anti-elaboration $\pi_{anti}:Y\to \check Y$ given by Corollary \ref{decomps}.  Then $f_{\check Y}$ is algebraically stable.
\end{thm}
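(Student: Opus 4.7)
The plan is to argue by contradiction. Suppose $f_{\check Y}$ fails to be algebraically stable: by Proposition~\ref{geometric criterion}, there is an irreducible curve $C_{\check Y}\subset\check Y$ contracted by $f_{\check Y}$ and a minimal $n\ge 1$ with $f_{\check Y}^n(C_{\check Y}) \in I(f_{\check Y})$. Write the orbit as $C_{\check Y}\to p^{(1)}\to\cdots\to p^{(n)}$, so that $p^{(j)}\notin I(f_{\check Y})$ for $j<n$. The goal is to lift this to a destabilizing orbit for $f_Y$ via $\pi_{anti}$, contradicting the hypothesis.

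The first key observation is that the entire orbit sits inside $\supp\div\eta_{\check Y}$. Indeed, $p^{(1)}=f_{\check Y}(C_{\check Y})$ lies there by Proposition~\ref{miscmapprop}(2). For the inductive step, assume $p^{(j)}\in\supp\div\eta_{\check Y}$ with $f_{\check Y}$ defined at $p^{(j)}$. If $p^{(j)}$ is a multiple point, or a simple point on a pole $C$ not contracted by $f_{\check Y}$, then Proposition~\ref{simpletosimpleprop} places $p^{(j+1)}$ in $\supp\div\eta_{\check Y}$ directly. Otherwise $f_{\check Y}$ contracts the pole $C\ni p^{(j)}$, and Proposition~\ref{miscmapprop}(2) again puts $p^{(j+1)}=f_{\check Y}(C)$ in $\supp\div\eta_{\check Y}$.

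The second observation is that $\pi_{anti}$ restricts to an isomorphism over $\supp\div\eta_{\check Y}$. I would prove this by induction on the blowup sequence $\pi_{anti}=\sigma_k\circ\cdots\circ\sigma_1$. Since $\eta_{\check Y}$ is zero-free, the first blowup center $q_1$ must satisfy $\ord(q_1,\eta_{\check Y})\ge 0$ (otherwise the exceptional curve would be polar rather than a zero, contradicting anti-elaboration), forcing $q_1\notin\supp\div\eta_{\check Y}$. Inductively, each intermediate surface retains the strict transform of $\supp\div\eta_{\check Y}$ as its polar locus, so every subsequent center also avoids this locus. Hence $\pi_{anti}$ has a single-point fiber and is a local biholomorphism over each point of $\supp\div\eta_{\check Y}$.

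To finish, let $C_Y\subset Y$ be the strict transform of $C_{\check Y}$ and let $p^{(j)}_Y\in Y$ be the unique preimage of $p^{(j)}$ under $\pi_{anti}$. Near each $p^{(j)}_Y$ the map $\pi_{anti}$ is a local biholomorphism, so the identity $f_Y=\pi_{anti}^{-1}\circ f_{\check Y}\circ\pi_{anti}$ shows that $f_Y$ contracts $C_Y$ to $p^{(1)}_Y$, sends $p^{(j)}_Y$ to $p^{(j+1)}_Y$ for $j<n$, and at $j=n$ sends $p^{(n)}_Y$ into $\pi_{anti}^{-1}\bigl(f_{\check Y}(p^{(n)})\bigr)$. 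Since $f_{\check Y}(p^{(n)})$ is a curve (as $p^{(n)}\in I(f_{\check Y})$) and $\pi_{anti}$ is birational, this preimage is itself positive-dimensional, forcing $p^{(n)}_Y\in I(f_Y)$. The resulting destabilizing orbit for $f_Y$ gives the desired contradiction. I expect the main obstacle to be the orbit-invariance argument of the second paragraph, which hinges on a case analysis distinguishing multiple points, simple points on non-contracted poles, and simple points on poles that get contracted.
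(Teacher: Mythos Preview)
Your argument is correct and rests on the same two facts as the paper's: contracted curves land in $\supp\div\eta$ (Proposition~\ref{miscmapprop}), and an anti-elaboration is an isomorphism over $\supp\div\eta_{\check Y}$ (your step~3, which is Corollary~\ref{blowupcor}). The organization, however, is reversed. You track the destabilizing orbit downstairs in $\check Y$, show it stays in $\supp\div\eta_{\check Y}$, and lift it through $\pi_{anti}$ to produce a destabilizing orbit for $f_Y$. The paper instead works upstairs: it first shows directly (via Proposition~\ref{jformula} and the definition of anti-elaboration) that $f_Y$ contracts the strict transform $C_Y$, then uses stability of $f_Y$ to conclude $p_Y:=f_Y^n(C_Y)\notin I(f_Y)$; since $\pi_{anti}(p_Y)=p_{\check Y}\in I(f_{\check Y})$, the point $p_Y$ must lie on a $\pi_{anti}$-exceptional curve, forcing $p_{\check Y}\notin\supp\div\eta_{\check Y}$ and contradicting Proposition~\ref{miscmapprop} applied to $f^n$. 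The paper's route is shorter because it only needs the terminal point to lie in $\supp\div\eta_{\check Y}$; your inductive step~2 (with its three-case analysis via Proposition~\ref{simpletosimpleprop}) is correct but could be replaced by a single application of Proposition~\ref{miscmapprop}(2) to the iterate $f_{\check Y}^j$. One small wording issue: your justification that $p^{(n)}_Y\in I(f_Y)$ is cleaner stated as the contrapositive---if $f_Y$ were holomorphic at $p^{(n)}_Y$, then since $\pi_{anti}$ is a local isomorphism there, $f_{\check Y}=\pi_{anti}\circ f_Y\circ\pi_{anti}^{-1}$ would be holomorphic at $p^{(n)}$.
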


Note that to prove this theorem and others in this section we will frequently (and without comment) use the geometric criterion for algebraic stability given in Proposition \ref{geometric criterion}.

\begin{proof}
Let $C_{\check Y}$ be an irreducible curve contracted by $f_{\check Y}$.  We claim that $f_Y$ contracts the strict transform $C_Y$ of $C_{\check Y}$ by $\pi_{anti}$.  Indeed $\pi_{anti}(f_Y(C_Y)) = f_{\check Y}(C_{\check Y}) $.  So if $f_Y$ does not not contract $C_Y$, then $\pi_{anti}$ contracts $f_Y(C_Y)$.  But then on the one hand, $\ord(f_Y(C_Y),\eta_Y)>0$ by definition of antielaboration; while on the other hand we have that $\ord(C_Y,\eta_Y) = \ord(C_{\check Y},\eta_Y)\leq 0$ implies $\ord(f_Y(C_Y),\eta_Y)\leq 0$ by Proposition \ref{jformula}.  This contradiction proves the claim.

Now suppose further that $f_{\check Y}^n(C_{\check Y})=p_{\check Y}\in I(f_{\check Y})$ for some minimal $n\in\N$, so that $f_{\check Y}$ fails to be algebraically stable.  Then because $f_Y$ is algebraically stable, the corresponding point $p_Y:=f_Y^n(C_Y)$ is not indeterminate for $f_Y$.  This implies that $p_Y$ lies in an irreducible curve $C'_Y$ contracted by $\pi_{anti}$ to $p_{\check Y}$.   But the first item in Corollary \ref{decomps} then yields $p_{\check Y}\notin\supp\div\eta_{\check Y}$, so that $f_{\check Y}^n$ contradicts Proposition \ref{miscmapprop}.  We conclude that $f_{\check Y}^n(C_{\check Y})\notin I(f_{\check{Y}})$ for any $n>0$, and $f_{\check Y}$ is algebraically stable.
\end{proof}

Recall from the introduction that $f$ is \emph{corrigible along $\eta$} if for any elaboration $X\to S$ of $\eta$, there is a further elaboration $Y\to X$ of $\eta_X$ such that no pole of $\eta_Y$ destabilizes $f_Y$.  If it exists, the further elaboration may be taken to be strong.  This follows from Proposition \ref{miscmapprop}, i.e. the image of a pole $C_Y$ of $\eta_Y$ is either another pole or a multiple point of $\eta_Y$.   Propostion \ref{miscmapprop} also tells us that checking whether a given pole destabilizes $\eta_Y$ can be accomplished in finitely many steps.  
For this reason it might seem that determining corrigibility along $\eta$ is much easier than determining full corrigibility for $f$.  Nevertheless, we will spend the rest of this section showing that corrigibility along $\eta$ implies corrigibility of $f$.  

\proofof{Theorem {\corrig}}
If $f$ is corrigible and $\pi_X$ is an elaboration of $\eta$, then by definition there is a modification $\pi:Y\to X$ such that $f_Y$ is algebraically stable.  By Theorem {\sbe} we may assume that $\pi$ is also an elaboration.  And since no irreducible curve in $Y$ destabilizes $f_Y$, this proves that $f$ is corrigible along $\eta$.

For the other direction, suppose instead that $f$ is corrigible along $\eta$.
Starting from any elaboration of $\eta$, we elaborate further so that the conclusions of Corollary \ref{straycor2} apply.
After a further strong elaboration, we may assume by Proposition \ref{strongcontract} that if $f_X:X\to X$ contracts a non-polar curve $C_X\subset X$, the image $f_X(C_X)$ is a simple point for $\eta_X$.  Invoking our hypothesis that $f$ is corrigible along $\eta$, we may finally assume that no pole of $f_X$ destabilizes $f_X$.   We will say that the elaboration $X$ is then \emph{prepared} for $f$.  Beginning with $X$ prepared, we will only need to employ peripheral elaborations to reach a surface on which $f$ becomes algebraically stable.

\begin{defn} A simple point $p$ belongs to the \emph{destabilizing locus} $\destab_X\subset\simplepolepts$ for $f_X$ if there exist $n,m\geq 0$ such that $\feta^n(p)$ is represented by a point in $I(f_X)$ and $p=\feta^m(q)$ for some $q$ represented by the image $f_X(C_X)$ of a curve contracted by $f_X$.
\end{defn}

\begin{lem}
\label{destable}
$\destab_X$ is a finite set, all of whose points appear in $X$.  Every destabilizing orbit for $f_X$ consists of incarnations of points in $\destab_X$.
\end{lem}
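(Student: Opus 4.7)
The plan is to leverage the three preparation assumptions on $X$---namely Corollary~\ref{straycor2}, Proposition~\ref{strongcontract}, and corrigibility along $\eta$---by tracking how the forward $\feta$-orbit of each image of a contracted non-polar curve behaves on $\supp\div\eta_X$.  Let $Q \subset \simplepolepts$ denote the finite set of simple $\eta$-points represented by images $f_X(C_X)$ of contracted non-polar curves $C_X$; by definition $\destab_X \subset \bigcup_{q \in Q} \{\feta^m(q) : m \geq 0\}$.

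The central claim, from which all three assertions flow, is that for each $p = \feta^m(q) \in \destab_X$ with witness $n$ (so $\feta^{m+n}(q) \in I(f_X)$), none of the simple poles $C^{(j)} \ni \feta^j(q)$ with $0 \leq j \leq m+n-1$ is contracted by $f_X$.  For if $C^{(j)}_X$ were contracted, then $f_X$ would collapse it to the representative of $\feta^{j+1}(q)$, and the subsequent representatives of $\feta^{j+i}(q)$ would coincide with the set-theoretic images $f_X^i(C^{(j)}_X)$ for $i \geq 1$; since these eventually terminate at the representative of $\feta^{m+n}(q) \in I(f_X)$, $C^{(j)}$ would be a destabilizing pole, contradicting corrigibility along $\eta$.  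Given this non-contraction, iterative application of Proposition~\ref{simpletosimpleprop} yields that each $\feta^k(q)$ for $0 \leq k \leq m+n$ has a simple representative in $X$, so in particular $p$ appears in $X$.

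The coverage assertion is then immediate: a destabilizing orbit $f_X(C_X), f_X^2(C_X), \ldots, f_X^n(C_X) \in I(f_X)$ begins with $C_X$ non-polar by corrigibility along $\eta$, hence exceptional by Corollary~\ref{straycor2}, so its image $q_X := f_X(C_X)$ is simple by Proposition~\ref{strongcontract}, placing $q \in Q$; and each subsequent $f_X^k(C_X)$ is the representative in $X$ of $\feta^{k-1}(q) \in \destab_X$.

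Finiteness follows by handling each $q \in Q$ separately.  If the $\feta$-orbit of $q$ meets $I(f_X)$ only finitely many times, then only an initial segment of iterates satisfies the defining condition; if it meets $I(f_X)$ infinitely often, the pigeonhole principle on the finite set $I(f_X)$ forces the orbit to be eventually periodic, with finite image.  In either case only finitely many $\feta^m(q)$ qualify, and summing over the finite set $Q$ bounds $\destab_X$.  The main obstacle is the central non-contraction claim: it requires careful tracking of how the representative-orbit of $q$ interlocks with the set-theoretic curve-orbit of a hypothetically contracted pole, and is precisely the point at which corrigibility along $\eta$ enters in its sharpest form.  Once this claim is established, all three assertions of the lemma follow in a straightforward manner.
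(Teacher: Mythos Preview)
Your proposal is correct and follows essentially the same approach as the paper: both arguments hinge on the observation that if some $\feta^j(q)$ fails to appear in $X$, then the simple pole through the last appearing iterate must be contracted by $f_X$ and hence destabilizing, contradicting preparedness.  The only difference is organizational---the paper frames this as a minimal-counterexample contradiction (choosing the least $m$ with $\feta^m(q)$ not appearing) whereas you package it as a forward induction via your ``central claim''; note that as stated your central claim presupposes that $C^{(j)}$ appears in $X$, so the claim and the appearance assertion must be proved simultaneously by induction on $j$ rather than sequentially, but this is implicit in your ``iterative application of Proposition~\ref{simpletosimpleprop}''.
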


\begin{proof}
Suppose in order to reach a contradiction that some $p\in\destab_X$ does not appear in $X$, i.e. the representative $p_X$ is multiple for $\eta_X$.  Then we may choose $p = \feta^m(q)$ where $q_X$ is the image of an irreducible curve $C_X$ contracted by $f_X$ and $m\in\N$ is minimal in the sense that $\feta^j(q)$ appears in $X$ for all $0\leq j < m$.  Also there exists some minimal $n\in\N$ such that $q'_X \in I(f_X)$ where $q' = \feta^n(p)$.  Hence $q'_X = f_X^n(p_X)$ is multiple for $f_X$ by Proposition \ref{simpletosimpleprop}.  If $m=0$, then preparedness of $X$ implies that $C_X$ is polar.  If $m>0$, then $\feta^{m-1}(p)_X$ is simple for $\eta_X$, so the same proposition implies that $f_X(C'_X) = p_X$, where $C'_X$ is the unique simple pole containing $\feta^{m-1}(p)_X$.  Either way, we see that there is a pole of $\eta_X$ that destabilizes $f_X$, contradicting preparedness.  This proves that all points in $\destab_X$ appear in $X$.  In particular, distinct points in $\destab_X$ have distinct
representatives in $X$.

Concerning finiteness, we begin as in the previous paragraph with the fact that each $\eta$-point in $\destab_X$ lies in the forward orbit of some $q\in\destab_X$ such that $q_X$ is the image of an irreducible curve contracted by $f_X$.   Since $f_X$ contracts only finitely many irreducible curves, there are only finitely many such $q$, so it suffices to show that only finitely many points in the forward orbit of each $q$ lie in $\destab_X$.  If $q$ is preperiodic for $\feta$, i.e. $\feta^j(q) = \feta^k(q)$ for some $k>j>0$, then the entire forward orbit of $q$ is finite and there is nothing to prove.  So suppose $q$ is not preperiodic for $\feta$.  If $\feta^j(q)_X\notin I(f_X)$ for any $j\in\N$, then $\destab_X\cap \{\feta^j(q):j\in\N\} = \emptyset$.  Otherwise, the map $\destab_X \to X$ sending $\eta$-points to their representatives is injective, so that $\feta^j(q)_X \in I(f_X)$ for only finitely many $j$.   So if $J$ is the largest of these, we have that
$\destab_X\cap \{\feta^j(q):j\in\N\}  = \{q,\dots,\feta^J(q)\}$
is again finite.

Finally, let $C_X\subset X$ be a destabilizing curve for $f_X$ and $p_X := f_X(C_X), f_X(p_X),\dots,f_X^n(p_X)\in I(f_X)$ be the corresponding destabilizing orbit.  Preparedness tells us that $C_X$ cannot be a pole for $\eta_X$ and that $p_X$ must therefore be a simple point.  Hence $p,\dots,\feta^n(p)$ are simple points represented by $p_X,\dots, f_X^n(p_X)$.  By definition they belong to $\destab_X$.
\end{proof}

After the fact that it incorporates all destabilizing orbits, the main virtue of $\destab_X$ as we have defined it is that it can only decrease when we elaborate over it.

\begin{lem}
\label{decreases}
Let $p\in\destab_X$ be any point, and let $\sigma:Y\to X$ be the blowup of $X$ at $p_X$.  Then $Y$ is prepared for $f$, and $\destab_Y\subset \destab_X$.
\end{lem}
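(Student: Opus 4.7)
The claim has two parts: that $Y$ is prepared for $f$, and that $\destab_Y\subset\destab_X$. Since $p\in\destab_X\subset\simplepolepts$, Lemma \ref{destable} guarantees $p$ appears in $X$ with simple representative $p_X$, so by Proposition \ref{blowup} the map $\sigma:Y\to X$ is a peripheral elaboration of $\eta_X$. The conditions (1)--(4) of Corollary \ref{straycor2} therefore persist for $Y$ by the last clause of that corollary. The new exceptional curve $E_Y$ is a simple pole of $\eta_Y$ meeting the strict transform $\tilde C_Y$ of the unique pole $\tilde C_X$ through $p_X$ at a single multiple point $r_Y$; every other point of $E_Y$ is simple for $\eta_Y$.

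For the non-polar contraction condition, any non-polar $C_Y$ contracted by $f_Y$ must be the strict transform of a non-polar $C_X$ contracted by $f_X$, since $E_Y$ itself is polar. Preparedness of $X$ makes $f_X(C_X)=q_X$ simple; if $q_X\neq p_X$ the lift $q_Y$ is simple automatically, while if $q_X=p_X$ one must show the image $f_Y(C_Y)$ avoids $r_Y$, which I expect to follow from Corollary \ref{straycor2}(4) together with the tangent-direction interpretation of $E_Y$. For the no-destabilizing-pole condition, strict transforms of poles of $\eta_X$ inherit the non-destabilizing behaviour of their counterparts in $X$, so the only real issue is the new pole $E_Y$. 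If $f_Y$ does not contract $E_Y$ there is nothing to verify; otherwise $f_Y(E_Y)=\feta(p)_Y$ by the very definition of $\feta$, and a destabilizing orbit for $E_Y$ would produce some $m\geq 0$ with $\feta^{m+1}(p)_Y\in I(f_Y)$. The main technical step of this part is to project such an orbit through $\sigma$ back into $X$ and, using the defining property $p=\feta^k(q)$ with $q_X=f_X(C_X)$ for some contracted $C_X$, to extract from it a pole of $\eta_X$ that destabilizes $f_X$, contradicting preparedness of $X$.

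For the inclusion $\destab_Y\subset\destab_X$, take any $p'\in\destab_Y$. Then $p'$ is simple and lies in the forward $\feta$-orbit of a point $q'_Y=f_Y(C_Y)$ for some non-polar $C_Y$ contracted by $f_Y$, and $\feta^n(p')_Y\in I(f_Y)$ for some $n$. The contracted curve $C_Y$ is either the strict transform of a non-polar curve contracted by $f_X$, in which case $q'=\feta^j(\text{image of an }f_X\text{-contracted curve})$ drops immediately to the analogous assertion in $X$, or $C_Y=E_Y$, in which case $q'=\feta(p)$ and $p$ itself lies in $\destab_X$ so $q'$ does too. The indeterminacy condition transfers from $Y$ to $X$ by the same projection argument as above, so $p'\in\destab_X$. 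The main obstacle throughout is keeping careful track of how contraction and indeterminacy transform under $\sigma$, particularly near the single multiple point $r_Y$ on $E_Y$.
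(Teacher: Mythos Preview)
Your argument rests on a basic miscomputation: since $p_X$ is simple for $\eta_X$, i.e.\ $\ord(p_X,\eta_X)=-1$, Proposition~\ref{blowup} gives $\ord(E_Y,\eta_Y)=\ord(p_X,\eta_X)+1=0$. Thus $E_Y$ is \emph{not} a pole of $\eta_Y$; the blowup $\sigma$ is peripheral, and $E_Y$ is a non-polar exceptional prime in the sense of Definition~\ref{exceptdef}. Consequently the intersection $r_Y=E_Y\cap\tilde C_Y$ lies on the single simple pole $\tilde C_Y$ and nothing else, so $r_Y$ is a \emph{simple} point and your acknowledged gap about $f_Y(C_Y)$ avoiding $r_Y$ evaporates. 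More seriously, your entire ``no-destabilizing-pole'' discussion for $E_Y$ is aimed at a condition that does not apply to $E_Y$.

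The condition that actually needs work is the non-polar one: if $f_Y$ contracts the non-polar curve $E_Y$, one must show $f_Y(E_Y)$ is simple for $\eta_Y$. You never address this, and your appeal to ``the very definition of $\feta$'' is misplaced since $\feta$ acts on $\polepts$ and $E\notin\poles$. The paper argues instead as follows. Since $X$ satisfies Corollary~\ref{straycor2}(1) and $E$ does not appear in $X$, the exceptional prime $E$ is tame; hence $f_Y(E_Y)$ represents $\feta(p)$, and it suffices to show $\feta(p)$ appears in $X$. If $\feta(p)\in\destab_X$ this is Lemma~\ref{destable}; otherwise $p_X\in I(f_X)$ by definition of $\destab_X$, and Corollary~\ref{straycor2}(4) forces some component of $f_X(p_X)$ to incarnate an exceptional prime originating at $\feta(p)$, so $\feta(p)$ appears in $X$. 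Your outline for $\destab_Y\subset\destab_X$ is structurally close to the paper's, but the case $C_Y=E_Y$ again requires tameness of $E$ (not the polar interpretation) to conclude $q'=\feta(p)$.
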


\begin{proof}
Let $E_Y = \sigma^{-1}(p_X)$.  By Lemma \ref{destable} $p_X$ is simple for $\eta_X$, so the class $E\subset \simples$ of $E_Y$ is a tame exceptional prime.  If $C_Y\neq E_Y$ is any irreducible curve contracted by $f_Y$, then $f_X$ contracts $C_X$.  So to see that $Y$ is prepared for $f$, it suffices to suppose that $f_Y$ contracts $E_Y$ and then show that $f_Y(E_Y)$ is a simple point for $\eta_Y$.  Tameness implies that $f_Y(E_Y)$ represents $\feta(p)$.  If $\feta(p)\in\destab_X$, then Lemma \ref{destable} tells us $\feta(p)$ appears in $X$.  If not then $p_X \in I(f_X)$.  Corollary \ref{straycor2} then tells us that $f_X(p_X)$ contains the incarnation $E'_X$ of an exceptional prime originating from $\feta(p)$.  As $E'$ appears in $X$, we again find that $\feta(p)$ appears in $X$.  In either case $\feta(p)$ appears in $Y$, too.  I.e. $f_Y(E_Y)$ is simple for $\eta_Y$.  So $Y$ is prepared for $f$.

The set $\destab_Y$ consists of orbit segments $q,\dots, q' := \feta^n(q)\in\simplepolepts$ such that $f_Y$ contracts some curve $C_Y$ to $q_Y$, and $q'_Y\in I(f_Y)$.  If $E_Y\neq C_Y$, then $f_X$ contracts $C_X$, and if $E_Y = C_Y$ then tameness implies $q = \feta(p)$.  So in either case, there exists \emph{some} curve $C'_X$ contracted by $f_X$ to a point $p'_X$ and $q$ is in the forward orbit of $p'$.  Similarly, if $E_Y\neq f_Y(q'_Y)$, then $q_X\in I(f_X)$.  And if $E_Y = f_Y(q'_Y)$, then $\feta(q') = p \in\destab_X$. So some point in the forward orbit of $q'$ appears as a point in $I(f_X)$.  It follows in all cases that the orbit segment $q,\dots, q'\in \destab_Y$ is contained in a similar orbit segment lying in $\destab_X$.
\end{proof}

The definition of $\destab_X$ implies that an $\feta$-periodic point belongs to $\destab_X$ if and only if all points in the cycle containing $p$ belong to $\destab_X$.

\begin{lem}
\label{nonperiodic}
After further peripheral elaboration of $\eta_X$, we may (additionally) arrange that all points in $\destab_X$ are periodic for $\feta$.
\end{lem}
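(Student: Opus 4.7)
The plan is to repeatedly blow up indeterminate endpoints of destabilizing orbits until every point left in $\destab$ is $\feta$-periodic, using the finiteness of $\destab_X$ (Lemma \ref{destable}) and the monotonicity furnished by Lemma \ref{decreases}. Each blowup of a simple point is a peripheral elaboration by Proposition \ref{blowup}, and a composition of peripheral elaborations is again peripheral by Proposition \ref{composition}; Lemma \ref{decreases} guarantees both preservation of preparedness and the inclusion $\destab_Y\subset\destab_X$ at every step, so it suffices to exhibit, whenever a non-periodic element of $\destab_X$ remains, a single blowup that strictly decreases $|\destab_X|$.

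Suppose a non-periodic $p\in\destab_X$ exists. By the definition of the destabilizing locus there is an indeterminate iterate $r:=\feta^n(p)$ with $r_X\in I(f_X)$; choose $r$ non-periodic if the orbit of $p$ admits such an indeterminate representative (for instance whenever $p$ is not preperiodic, in which case one takes the last such iterate in $\destab_X$), and otherwise take $r$ to be the periodic indeterminate point of the cycle into which $p$'s tail enters. Either way, blow up $r_X$ to get the peripheral elaboration $\sigma\colon Y\to X$. The new representative of $r$ in $Y$ is the transverse intersection $r_Y=C_Y\cap E_Y$, where $C_Y$ is the strict transform of the (unique) pole of $\eta_X$ containing $r$ and $E_Y$ is the exceptional curve of $\sigma$. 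Corollary \ref{straycor2}(3)--(4) together with Proposition \ref{indeterminacy} imply that the blowup resolves $f$'s indeterminacy at $r$, i.e.\ $r_Y\notin I(f_Y)$, so $r$ itself falls out of $\destab_Y$; moreover in the periodic case the entire cycle and its preperiodic tail (which includes $p$) drop out simultaneously because the cycle no longer harbors any indeterminate representative.

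The main obstacle is justifying that blowing up $r_X$ cleanly resolves the indeterminacy, i.e.\ that $r_Y\notin I(f_Y)$ and that no other point in $p$'s orbit acquires new indeterminacy in $Y$. The first part draws on Corollary \ref{straycor2}(3)--(4) to control the components of $f_X(r_X)$ (all incarnations of exceptional primes originating at $\feta(r)$), so that separating them by blowing up $r_X$ separates their images along $E_Y$ and leaves $r_Y$ determinate; the second uses non-periodicity (hence $\feta^k(r)_Y=\feta^k(r)_X$ for $k\geq 1$), together with Proposition \ref{simpletosimpleprop}, to see that other indeterminate representatives are unaffected. Once these facts are established, $\destab_Y\subsetneq\destab_X$ strictly at every step, and finiteness of $\destab_X$ (Lemma \ref{destable}) forces the procedure to halt in at most $|\destab_X|$ peripheral blowups, producing an elaboration whose destabilizing locus contains only $\feta$-periodic points.
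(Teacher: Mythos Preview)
Your strategy is dual to the paper's: you try to kill non-periodic points by blowing up at the \emph{indeterminate} end of a destabilizing orbit segment, whereas the paper blows up at the \emph{contracted-curve} end (a non-periodic $p\in\destab_X$ with no $\feta$-preimage in $\destab_X$, hence $p_X$ is the image of a curve contracted by $f_X$).  Unfortunately your version has a real gap.

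The central claim---that a \emph{single} blowup $\sigma:Y\to X$ at $r_X$ yields $r_Y\notin I(f_Y)$---is neither supported by the references you cite nor true in general.  Corollary~\ref{straycor2}(3)--(4) only tells you that the irreducible components of $f_X(r_X)$ are exceptional primes originating at $\feta(r)$; it says nothing about how many blowups are needed to separate them from $r_Y$.  Proposition~\ref{indeterminacy} is likewise silent on this.  In fact Corollary~\ref{straycor1} makes clear that resolving indeterminacy at a simple point $r$ requires blowing up to \emph{sufficient depth} $J$ at $r$ \emph{and} to matching depth at $\feta(r)$ in the target.  Since you blow up only at $r$ (and $\feta(r)\neq r$ when $r$ is non-periodic), you have not modified the target near $\feta(r)$ at all, so there is no reason for $r_Y\notin I(f_Y)$ even after many blowups over $r$ alone.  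Your heuristic that ``separating them by blowing up $r_X$ separates their images along $E_Y$'' confuses source and target: the curves in $f_X(r_X)$ live over $\feta(r)$, not over $r$, and blowing up at $r$ does nothing to them.

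The paper's choice avoids this difficulty.  Working instead at a non-periodic $p$ with $\feta^{-1}(p)\cap\destab_X=\emptyset$, one notes that the finitely many curves $C_X$ contracted by $f_X$ to $p_X$ each have $\action C$ equal to some exceptional prime $E_{k}(p)$ of finite depth; after blowing up at $p$ enough times that all these depths appear, none of those curves contract to $p_Y$ anymore.  The newly created curves $E_j(p)$ are tame, so if contracted they land at $\feta(p)\neq p$.  Hence finitely many blowups at $p$ suffice to ensure $p_Y$ is not the image of any contracted curve, and since $\feta^{-1}(p)\cap\destab_Y=\emptyset$ persists by Lemma~\ref{decreases}, this forces $p\notin\destab_Y$.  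The termination is transparent and requires no control over indeterminacy.
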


\begin{proof}
Suppose $\destab_X$ contains nonperiodic points.  Because $\destab_X$ is finite, we can choose $p\in\destab_X$ such that no $\feta$-preimage of $p$ lies in $\destab_X$.   Necessarily, $p_X$ is the image of a curve contracted by $f_X$.

Let $\sigma:Y\to X$ be the blowup of $X$  at $p_X$.  Then Lemma \ref{decreases} guarantees that $\feta^{-1}(p)\cap \destab_Y = \emptyset$.  So $p\in\destab_Y$ means again that $p_Y$ is the image of a curve contracted by $f_Y$.  So we repeat the process, continuing to blow up incarnations of $p$.  By Proposition \ref{blowup}, finitely many such blowups lead to a peripheral elaboration $Y\to X$ such that $p_Y$ is not the image of any curve contracted by $f_Y$.  So in finitely many steps we reach an elaboration $Y\to X$ such that $\destab_Y\subset \destab_X\setminus\{p\}$.

Since $\destab_X$ is finite, we can repeat the above as often as necessary to eliminate all nonperiodic $p\in\destab_X$.
\end{proof}

The proof of Theorem {\corrig} is now completed by

\begin{lem}
\label{periodic}
If $\destab_X$ consists only of periodic points and $p\in\destab_X$ is any point, then there is a peripheral elaboration $Y\to X$ over the points in the cycle generated by $p$ such that $p\notin\destab_Y$.
\end{lem}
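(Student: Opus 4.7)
The plan is to perform one peripheral elaboration $\pi\colon Y\to X$ over the whole cycle through $p$, deep enough that every cycle point becomes a regular point of $f_Y$. Write the cycle as $p=p_1,p_2,\ldots,p_N$ with $\feta(p_i)=p_{i+1}$ (indices modulo $N$), let $C_i\in\simples$ be the pole of $\eta$ containing $p_i$, and set $m_i:=m(f,C_i)$.

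First, using Corollary \ref{straycor1}, I would pick $J\in\N$ so large that two finitary conditions hold for every $i$: the prime $E_k(p_i)$ is tame and satisfies $\action E_k(p_i)=E_{m_ik+\tau_i}(p_{i+1})$ for every $k\ge J$, where $\tau_i\in\Z$ is the intrinsic constant from Lemma \ref{straylem1}; and whenever $p_{i,X}\in I(f_X)$, each irreducible component of $f_X(p_{i,X})$ incarnates an exceptional prime of depth strictly less than $J$. Then I would define $Y$ by the $NJ$ successive blowups at simple points that make $E_k(p_i)$ appear in $Y$ for every $1\le i\le N$ and every $1\le k\le J$; by Proposition \ref{blowup} the composition is peripheral. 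Denote by $p_{i,Y}$ the incarnation of $p_i$ in $Y$, namely the unique simple point at which the strict transform of $C_i$ meets $E_J(p_i)_Y$.

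The lemma reduces to the single claim that $p_{j,Y}\notin I(f_Y)$ for every $j$. Indeed, the forward $\feta$-orbit of $p$ coincides with the cycle $\{p_1,\ldots,p_N\}$, so granting the claim no $\feta^n(p)$ will be represented by an indeterminate point of $f_Y$, and the definition of $\destab_Y$ then forces $p\notin\destab_Y$. Note that the exceptional curves $E_J(p_j)_Y$ may still be contracted by $f_Y$ into the cycle (when $(m_j-1)J+\tau_j>0$), but this is harmless once the indeterminacy half of the destabilizing condition has been broken.

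The main technical obstacle is verifying the claim, which splits into two cases according to whether $p_{j,X}$ is indeterminate for $f_X$. When $p_{j,X}\notin I(f_X)$, I would adapt the monomial computation in the proof of Lemma \ref{straylem1}: choose local coordinates in which $C_j$ and $C_{j+1}$ are the $x$-axis and $f$ takes the monomial form $(x,y)\mapsto(x^{m_j},y)$, then transport this through the chart on the $J$-fold source tower given by $(x,y)=(x_1y_1^J,y_1)$ and the analogous chart on the target; the resulting expression for $f_Y$ near $p_{j,Y}$ is polynomial in $(x_1,y_1)$, so $f_Y$ is regular at $p_{j,Y}$ with value $p_{j+1,Y}$. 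When $p_{j,X}\in I(f_X)$, the second condition on $J$ guarantees that every irreducible component of $f_X(p_{j,X})$ has been fully incarnated as some $E_{k_a}(p_{j+1})_Y$ with $k_a<J$ in the target; a graph-closure argument then shows that the indeterminacy at $p_{j,X}$ disperses to finitely many points of the exceptional tower over $p_j$, all disjoint from $p_{j,Y}$, because the tangent direction along the strict transform of $C_j$ at $p_{j,Y}$ is transverse to all of these incarnated image curves. Together the two cases give $p_{j,Y}\notin I(f_Y)$, completing the proof.
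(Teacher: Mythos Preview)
Your plan has a genuine gap: the central claim that $p_{j,Y}\notin I(f_Y)$ for every $j$ is false in general when the depth $J$ is chosen \emph{uniformly} along the cycle.  To see why, redo your local computation carefully.  The monomial normal form $(x,y)\mapsto(x^{m_j},y)$ from Lemma~\ref{straylem1} is achieved only after certain source and target blowups; it does not hold in coordinates on the given prepared $X$.  If you track the intrinsic depths correctly, the map $f_Y$ in the chart around $p_{j,Y}$ becomes
\[
(x_1,y_1)\ \longmapsto\ \bigl(x_1^{m_j}\,y_1^{(m_j-1)J+\tau_j}\cdot(\text{unit}),\,y_1\bigr),
\]
where $\tau_j\in\Z$ is the intrinsic shift from Lemma~\ref{straylem1}.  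This is regular at the origin only when $(m_j-1)J+\tau_j\ge 0$.  For $m_j\ge 2$ large $J$ saves you, but for $m_j=1$ the exponent is just $\tau_j$, and nothing in your setup forces $\tau_j\ge 0$.  Concretely, with $m_j=1$ and $\tau_j<0$ you get $p_{j,Y}\in I(f_Y)$ for every $J$; and since $\action E_J(p_{j-1})=E_{m_{j-1}J+\tau_{j-1}}(p_j)$ has depth $>J$ when $(m_{j-1}-1)J+\tau_{j-1}>0$, the curve $E_J(p_{j-1})_Y$ is contracted by $f_Y$ to $p_{j,Y}$.  So both halves of the membership criterion for $\destab_Y$ can survive, and $p$ remains in $\destab_Y$.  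Your Case~2 ``graph-closure'' argument is also too vague to rule this phenomenon out on the indeterminate side.

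The paper avoids this by \emph{not} aiming for uniform depth and not trying to kill indeterminacy at every cycle point.  Instead it elaborates so that the deepest exceptional primes $E_0,\dots,E_{n-1}$ at $p^0,\dots,p^{n-1}$ satisfy $\action E_{j-1}=E_j$ exactly (so depths are chosen compatibly with $\action$, not equal to one another), then compares with an auxiliary surface whose depth at $p^0$ is $\action E_{n-1}$.  Corollary~\ref{straycor1} gives that the ``mixed'' map $f_{XY}$ has neither indeterminacy nor contraction at cycle points, and depending on whether $E_0$ or $E_n$ is deeper one deduces that $f_X$ itself enjoys \emph{one} of these two properties along the whole cycle.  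Either one suffices to expel the cycle from $\destab_X$.  The key conceptual point your approach misses is that you only need to break one half of the $\destab$ condition, and which half gets broken depends on the sign of the cumulative shift around the cycle.
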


\begin{proof}
Let $n>0$ be the minimal period of $p$, and let $p^j := \feta^j(p)$ denote the points in the cycle generated by $p$.  By further elaboration of $\eta_X$ over this cycle, we first arrange the following.
\begin{itemize}
\item If for any $j$ we find that $f_X$ contracts an irreducible curve $C_X\subset X$ to $p^j_X$, then $C_X$ is a tame exceptional prime originated from $p^{j-1}$.
\item For $0\leq j \leq n-1$, let $E_j$ be the deepest prime that originates from $p^j$ and appears in $X$.  Then the depth of $E_j$ is large enough so that Corollary~\ref{straycor1} applies at $p^j_X$.
\item For $1\leq j\leq n-1$, we have $\action E_{j-1} = E_j$.
\end{itemize}
We then set $E_n := \action E_{n-1}$, and let $Y$ be the elaboration of $\eta$ that differs from $X$ in exactly the following way: the deepest exceptional prime that originates at $p$ and appears in $Y$ is $E_n$ rather than $E_0$.

Under these circumstances Corollary \ref{straycor1} tells us that $p^j_X$, $0\leq j\leq n-1$, is not indeterminate for $f_{XY}$ and that $p^j_Y$ is not the image of a curve contracted by $f_{XY}$.  The exceptional primes $E_0$ and $E_n$ both originate at $p=p_0=p_n$.  Suppose $E_0$ is the deeper of the two.  It follows that $X$ is obtained from $Y$ by further elaboration $\pi:X\to Y$ centered at $p$.  Thus $f_X = \pi^{-1} \circ f_{XY}$, and we infer that no point $p^j_X$ (including $j=0$) is the image of a curve contracted by $f_X$.
Similarly, if $E_n$ is deeper than $E_0$, then $f_X = \pi\circ f_{XY}$ for an elaboration $\pi:Y\to X$.  So
in this case no point $p_X^j$ (including $j=n-1$) is indeterminate for $f_X$.

Either way, we see that the cycle $p,\dots,\feta^n(p)$ is eliminated from $\destab_X$.
\end{proof}

\section{Corrigibility in the smooth and (especially) toric cases}
\label{corrigibility}

The next result, an easy consequence of Theorem {\corrig}, immediately gives corrigibility for any rational map that preserves a two-form equivalent to one of those given in the smooth cases of Proposition {\tricotomy } and Theorem \ref{irrationalegs}.  Note that these include many non-invertible rational maps.

\begin{cor}
\label{corrigiblesmooth}
If $\eta$ is a meromorphic two-form on a surface $S$ with $-\div\eta$ equal to a reduced smooth curve $C_S$, then any rational map $f:S\to S$ that preserves $\eta$ is corrigible.
\end{cor}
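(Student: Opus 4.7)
The strategy is to apply Theorem {\corrig}, so it suffices to verify that $f$ is corrigible along $\eta$. Since $-\div\eta = C_S$ is smooth and reduced, its irreducible components are pairwise disjoint; thus $\eta$ has only simple poles and no multiple points. By Proposition \ref{blowup}, every elaboration of $\eta$ is peripheral: blowing up a simple point $p_X$ of $\eta_X$ produces an exceptional curve of $\eta$-order $\ord(p_X,\eta_X)+1=0$, which is not a pole of the pulled-back form.

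Now let $\pi_X : X \to S$ be any elaboration of $\eta$. By the preceding observation, the poles of $\eta_X$ are precisely the strict transforms of the components of $C_S$, and these remain pairwise disjoint; in particular $\eta_X$ has no multiple points whatsoever. If $f_X$ were to contract some pole $C_X$ of $\eta_X$, then Proposition \ref{miscmapprop}(2) would force $f_X(C_X)$ to be a multiple point of $\eta_X$, which is impossible. Hence $f_X$ contracts no pole of $\eta_X$, and consequently no pole can be destabilizing for $f_X$. Taking the trivial further elaboration $Y=X$ in the definition shows $f$ is corrigible along $\eta$.

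Invoking Theorem {\corrig} then yields corrigibility of $f$. One small bookkeeping point deserves mention: Theorem {\corrig} is stated for $S=\cp^2$, but in the rational case one may first conjugate birationally to $\cp^2$ via Proposition {\tricotomy} (landing in the smooth-cubic case), while in the irrational smooth case of Lemma \ref{dicotomy} the proof of Theorem {\corrig} — built on the surface-independent machinery of $\destab_X$, $\feta$, and elaborations — carries over verbatim. I do not anticipate a substantive obstacle here; the real content of the corollary is the observation above that smoothness of $-\div\eta$ eliminates the only mechanism by which a pole of $\eta_X$ could be contracted, thereby trivially ruling out destabilization by poles at every stage.
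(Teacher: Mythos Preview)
Your proof is correct and follows essentially the same route as the paper: both argue that the absence of multiple points forces all elaborations to be peripheral, so the poles of $\eta_X$ are always the strict transforms of the components of $C_S$, none of which can be contracted by $f_X$; hence $f$ is trivially corrigible along $\eta$ and Theorem {\corrig} applies. The only cosmetic difference is that you invoke Proposition \ref{miscmapprop}(2) (a contracted pole must land on a multiple point) while the paper cites Corollary \ref{jcor} (poles map to poles under $\action$, and all poles already appear in $X$); your caveat about Theorem {\corrig} being stated for $\cp^2$ is well taken and handled in the paper by the remark that its proof goes through on any compact K\"ahler surface.
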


\begin{proof}
Let $\pi_X$ be any elaboration of $\eta$.  By Proposition \ref{blowup}, $-\div\eta_X = C_X$ is just the strict transform of $C_S$.  That is, $\primes = \poles$ is a finite set with each element incarnated by an irreducible component of $C$.   Corollary \ref{jcor} therefore implies that $f$ permutes the components of $C$ without contraction.  
So even without further elaboration, we see that $f$ is corrigible along $\eta$.  Theorem {\corrig} then gives that $f$ is corrigible.
\end{proof}

The rest of this section is devoted to establishing Theorem {\toriccor}, which is an analogue for Corollary \ref{corrigiblesmooth} in the toric case.  So let $\eta=\frac{dx\wedge dy}{xy}$ and $f:\cp^2\to\cp^2$ be a rational map preserving $\eta$.  The starting point for this discussion is Theorem {\toricact}, which equates $\action:\poles\to\poles$ with a piecewise linear and integral map $T_f:\R^2\to\R^2$.   We assume here that $T_f$ is a homeomorphism.  By regarding $T_f$ as a map on one dimensional rays, we obtain a circle homeomorphism $S^1\to S^1$, which we persist in calling $T_f$.

Let $F$ be a lift of $T_f:S^1\to S^1$ to $\R$. Then the \emph{rotation number} (see \cite[Chapter 11]{KaHa} for more details) of $T_f$, denoted by $\rho(T_f)$, is the quantity
\[
\rho(T_f):=\lim_{n\to\infty}\frac{F^n(x)}{n}.
\]
The limit always exists, and is independent (mod $\Z$) of the choices of lift $F$ and basepoint $x\in\R$.  One has $\rho(T_f) \in \Q$ if and only if $T_f$ has periodic rays. If $T_f$ is orientation reversing, then $\rho(T_f)=0$, $T_f$ has exactly two fixed rays, and all other periodic rays have minimal period two. If $T_f$ is orientation preserving and $\rho(T_f)=\frac{m}{n}$ with $m,n\in\Z$, $n>0$, and $\gcd(m,n)=1$, then all periodic points of $T_f$ have minimal period $n$.

\begin{lem}
\label{denjoy}
Suppose $\rho(T_f)\notin \Q$.  Then there is a homeomorphism $S^1\to S^1$ conjugating the action of
$T_f$ on rays to rotation by $\rho(T_f)$
\end{lem}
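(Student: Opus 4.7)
The plan is to reduce this statement to Poincar\'e's classical classification of circle homeomorphisms with irrational rotation number, and then to rule out wandering intervals by exploiting the piecewise projective structure of $T_f$.

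Since orientation-reversing circle homeomorphisms have rotation number zero (as recalled just before the lemma) and $\rho(T_f)$ is irrational, $T_f$ is necessarily orientation preserving. Poincar\'e's theorem then yields a continuous, degree-one, monotone surjection $h:S^1\to S^1$ satisfying $h\circ T_f = R_{\rho(T_f)}\circ h$, where $R_\rho$ denotes rigid rotation by $\rho$. Standard arguments show that $h$ is a homeomorphism---hence a topological conjugacy---if and only if $T_f$ admits no \emph{wandering intervals}, i.e., no non-degenerate open arcs $I\subset S^1$ whose forward iterates $\{T_f^n(I)\}_{n\ge 0}$ are pairwise disjoint.

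The key structural observation is that $T_f:S^1\to S^1$ is piecewise projective with only finitely many break points: between consecutive break points the action on rays is induced by a single linear map $A:\R^2\to\R^2$ (the restriction of $T_f$ to the corresponding cone), and this action is a M\"obius transformation in the standard angular coordinate. In particular $T_f$ is piecewise smooth with Schwarzian derivative vanishing identically on each smooth piece.

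The main obstacle is the Denjoy-type argument ruling out wandering intervals in this piecewise projective setting. If $I$ is wandering, disjointness of its iterates forces $\sum_n |T_f^n(I)|\le |S^1|$ and hence $|T_f^n(I)|\to 0$. On the other hand, the vanishing Schwarzian on each smooth piece yields Koebe-style distortion control for the compositions $T_f^n|_I$ as long as the orbit segment $\{T_f^j(I):0\le j<n\}$ avoids the finite set of break points of $T_f$. Since the iterates of $I$ are pairwise disjoint, $I$ can meet any given break point in at most one iterate; excising these finitely many ``bad'' indices leaves distortion bounded along arbitrarily long orbit segments, which forces a uniform geometric lower bound on $|T_f^n(I)|/|I|$, contradicting summability. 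Hence no wandering interval exists, $h$ is a homeomorphism, and the desired conjugacy is obtained.
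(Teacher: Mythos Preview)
Your approach is the same as the paper's: both recognize that $T_f$ is a piecewise projective (M\"obius) circle homeomorphism with finitely many break points, and both invoke a Denjoy-type theorem for such maps. The paper simply cites \cite[Theorem 2.5]{ADM} rather than sketching the argument.

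Your sketch, however, has a genuine gap in the last paragraph. Bounded distortion of $T_f^n|_I$ means only that the ratio $\max_I (T_f^n)'/\min_I (T_f^n)'$ is bounded; it does \emph{not} by itself give a uniform lower bound on $|T_f^n(I)|/|I|$, since the derivative of $T_f^n$ on $I$ can still tend uniformly to zero. So the contradiction with $\sum_n |T_f^n(I)|<\infty$ does not follow as you state it. The actual Denjoy argument is more delicate: one must compare the lengths of \emph{pairs} of iterates (e.g.\ via closest returns, or by applying the distortion estimate to $T_f^{-n}$ on a neighboring interval as well and exploiting that the product $(T_f^n)'(x)\cdot(T_f^{-n})'(T_f^n x)=1$), or alternatively control the total variation of $\log(T_f)'$ and invoke the Denjoy--Koksma inequality. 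For piecewise M\"obius maps there is also the issue that the one-sided derivatives at break points may disagree, so one must check that the jumps of $\log(T_f)'$ are summable along the orbit; this is where the disjointness of the $T_f^j(I)$ and the finiteness of the break set genuinely enter. Either spell out one of these standard completions or, as the paper does, cite a reference that covers the piecewise projective case.
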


\begin{proof}
This is an instance of the classical Denjoy Theorem for circle homeomorphisms with irrational rotation number (see \cite[\S 12.1]{KaHa}), slightly generalized to accommodate for lower regularity of $T_f$ along the finitely many rays in $\R^2$ where linearity fails.  The version given in \cite[Theorem 2.5]{ADM} suffices.
\end{proof}

This discussion has the following fairly immediate consequence.  

\begin{cor}
\label{plaz2}
Let $T$ be a piecewise linear automorphism of $\Z^2$, i.e. a piecewise linear homeomorphism $T:\R^2\to\R^2$ such that $T(\Z^2)=\Z^2$. Then $T$ induces a homeomorphism $S^1\to S^1$ with rational rotation number.
\end{cor}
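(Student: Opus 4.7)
The plan is to run the chain of results developed in the paper backwards, using Usnich's realization theorem \cite{Us} as the one external input. First I would invoke \cite{Us} to produce a birational map $f:\cp^2\to\cp^2$ preserving $\eta = \frac{dx\wedge dy}{xy}$ with $T_f = T$. Since $T$ is a homeomorphism by hypothesis, so is $T_f$, and the discussion preceding Theorem~{\toriccor} identifies the induced map on one-dimensional rays with the circle homeomorphism whose rotation number $\rho(T)$ we want to control.

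Next I would invoke \cite{DiFa}: every birational self-map of a projective surface admits an algebraically stable birational model. In the language of this paper, $f$ is corrigible; in particular there is a modification $\pi:Y\to \cp^2$ such that the lift $f_Y:Y\to Y$ is algebraically stable. Algebraic stability of $f_Y$ immediately passes to every iterate, since $((f_Y)^*)^{kn} = (f_Y^{kn})^*$ for all $k,n\in\N$ implies $((f_Y^n)^*)^k = (f_Y^{kn})^*$. Hence each iterate $f^n$ of $f$ is birationally conjugate to an algebraically stable map.

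Finally I would conclude by contradiction using the third bullet of Theorem~{\toriccor}: if $\rho(T_f)$ were irrational, then no iterate of $f$ could be birationally conjugate to an algebraically stable map, in direct conflict with the previous paragraph. Therefore $\rho(T) = \rho(T_f) \in \Q$.

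No step in this chain presents a real obstacle; the substantive work has been done in Theorems~{\sbe}, {\corrig}, {\toricact}, and {\toriccor}, together with the Denjoy-type input recorded in Lemma~\ref{denjoy}. The only mild care needed is in the second paragraph, where one must observe that corrigibility of $f$ gives not just an algebraically stable model for $f$ itself but for every iterate of $f$ — which is exactly what the third bullet of Theorem~{\toriccor} rules out in the irrational case.
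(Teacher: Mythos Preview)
Your proposal is correct and follows essentially the same approach as the paper: invoke Usnich to realize $T$ as $T_f$ for a birational $f$, invoke \cite{DiFa} to get an algebraically stable model, and derive a contradiction from irrational rotation number. The only cosmetic difference is that you appeal to the third bullet of Theorem~{\toriccor} as a black box, whereas the paper (which places this corollary before the proof of Theorem~{\toriccor}) unpacks that bullet directly via Lemma~\ref{denjoy} to exhibit the destabilizing orbit; since Theorem~{\toriccor} does not use Corollary~\ref{plaz2}, there is no circularity in your ordering.
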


\begin{proof}
Suppose $T$ has irrational rotation number.  Then Lemma \ref{denjoy} implies that the images $T^n(\tau)$ of any ray $\tau\subset\R^2$ are dense in $\R^2$.

As we noted above, Usnich showed that $T = T_f$ for some plane birational map preserving $\eta = \frac{dx\wedge dy}{xy}$.  By \cite{DiFa}, $f$ is corrigible and Theorem~\ref{stablebyelaboration} therefore gives us an elaboration $\pi_X$ of $\eta$ such that $f_X$ is algebraically stable.  On the other hand, let $C_X\subset X$ be any irreducible component of $\div\eta_X$ and $\tau\subset\R^2$ be the corresponding rational ray.  Then for some (in fact any) two dimensional cone $\sigma$ in the fan for $X$ and some $n>0$, we have $n,m>0$ such that $T_f^n(\tau),T^{-m}(\sigma) \in \sigma$.  Thus $f_X^n(C_X) = p_X$ and $C_X\subset f_X^m(p_X)$, where $p_X$ is the double point of $\eta_X$ corresponding to $\sigma$.  This contradicts algebraic stability of $f_X$ and implies that $\rho(T)$ is rational after all.
\end{proof}

Corollary \ref{plaz2} was originally proven by very different methods in \cite{GhSe} and later given simpler proofs by Liousse \cite{Li} and Calegari \cite{Cal}.  The proof we give here can be recast in purely combinatorial terms, without any reference to algebraic geometry.  The resulting argument is reminiscent of the train tracks proof given by Calegari.

\begin{eg}
If $f$ is monomial, then $T_f:\R^2\to\R^2$ is linear.  Hence $\rho(T_f)$ is rational if and only if some iterate of $T_f$ has a real eigenvalue.  Since the matrix for $T_f$ has integer entries, it suffices to consider $T_f^6$. 
\end{eg}

\begin{eg}
The following examples are from \cite{Fa2,JW}. These are examples of monomial maps which become corrigible along $\eta$ only when passing to an iterate.  Hence passing to an iterate in the first two conclusions of Theorem {\toriccor} is necessary.
\begin{itemize}
\item \cite[Examples 3.12]{JW} For $A=\left(\begin{matrix} -1 & 3 \\ 3 & 2\end{matrix}\right)$, the corresponding
monomial map $f_A(x,y)=(x^{-1}y^3, x^3 y^2)$ cannot be made algebraically stable. In this case, $\det(A)=-11<0$ and
$f_A^2$ can be stabilized by toric blowups.
\item \cite[Examples 3.14]{JW} For $A=\left(\begin{matrix} -1 & -1 \\ 3 & -1\end{matrix}\right)$, the corresponding
monomial map $f_A(x,y)=(x^{-1}y^{-1}, x^3 y^{-1})$ cannot be made algebraically stable. In this case, $A^3=8\cdot\Id$ and
$f_A^3$ is in fact a morphism on any toric variety.
\end{itemize}
\end{eg}

\begin{eg}
Let $f_1:\cp^2\to\cp^2$ be a birational map in the family given by \eqref{idaction} in \S\ref{section_example_toric}, $f_2:\cp^2\to\cp^2$ be a monomial map, and $f:= f_1\circ f_2$.  Then as we observed earlier $T_{f_1} = \id$ so that $T_f = T_{f_1}\circ T_{f_2} = T_{f_2}$.  Hence by Theorem {\toriccor}, there exists an iterate $f^n$ of $f$ that is birationally conjugate to an algebraically stable map if and only if some power of the eigenvalues of $T_{f_2}$ is real.  This gives in particular many new examples of rational maps $f:\cp^2\to\cp^2$ that are not stabilizable by birational conjugacy.  We intend to consider the dynamics of some of these in detail in future article.
\end{eg}

Before proving Theorem {\toriccor}, let us point out one further consequence.

\begin{cor}
If $f:\cp^2\to\cp^2$ is a rational map preserving $\frac{dx\wedge dy}{xy}$ and $T_f$ is injective with
$\rho(T_f) \in \Q$, then the first dynamical degree $\lambda_1(f)$ is an algebraic integer.
\end{cor}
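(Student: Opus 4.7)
The plan is to string together Theorems E and F with basic facts about algebraic stability and integrality. First I would observe that the hypothesis that $T_f$ is injective, combined with Theorem~{\toricact} (which says that $T_f\colon\R^2\setminus\{\origin\}\to\R^2\setminus\{\origin\}$ is a finite covering), immediately upgrades $T_f$ to a homeomorphism of $\R^2$. This puts us squarely in the setting of Theorem~{\toriccor}. Because $\rho(T_f)\in\Q$, either $T_f$ is orientation reversing (so the first bullet of Theorem~{\toriccor} gives $n=2$), or it is orientation preserving with $\rho(T_f)=m/n$ in lowest terms (so the second bullet gives this same $n$); in both situations there exists an integer $n\geq 1$ such that $f^n$ is corrigible.

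Next I would unpack corrigibility. By definition, applied to the trivial elaboration $\pi_{\cp^2}=\mathrm{id}$, there exists a further elaboration $\pi\colon X\to\cp^2$ of $\eta$ such that the lift $g:=(f^n)_X\colon X\to X$ is algebraically stable. By the remarks following the definition of algebraic stability in \S\ref{background}, the first dynamical degree $\lambda_1(g)$ is then exactly the spectral radius $r_1$ of the pullback operator $g^*\colon H^{1,1}(X)\to H^{1,1}(X)$. Since dynamical degrees are birational invariants and multiplicative under iteration, we have $\lambda_1(g)=\lambda_1(f^n)=\lambda_1(f)^n$.

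The third step is to extract algebraic integrality. The operator $g^*$ restricts to an integer-linear endomorphism of the Picard lattice $\pic(X)\subset H^{1,1}(X)$, so in any integer basis of $\pic(X)$ it is represented by a matrix with entries in $\Z$; its characteristic polynomial is therefore monic with integer coefficients, and every one of its eigenvalues—in particular $r_1=\lambda_1(f)^n$—is an algebraic integer. Finally, since $\lambda_1(f)$ is a nonnegative real root of the monic polynomial $x^n-\lambda_1(f)^n$ whose constant coefficient is already known to be an algebraic integer, and since the algebraic integers form an integrally closed subring of $\C$, $\lambda_1(f)$ is itself an algebraic integer.

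I do not anticipate a real obstacle: the whole argument is assembly of already-proved facts, the only mildly subtle point being the passage from $\lambda_1(f)^n$ to $\lambda_1(f)$ being an algebraic integer, which is handled by integral closure of $\overline{\Z}$. One should just make sure when invoking Theorem~{\toriccor} to observe explicitly that ``injective $+$ finite covering'' forces $T_f$ to be a homeomorphism, which is the exact hypothesis of that theorem.
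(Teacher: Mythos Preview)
Your proposal is correct and follows essentially the same route as the paper: use Theorem~{\toriccor} to obtain an algebraically stable model for some iterate $f^n$, read off $\lambda_1(f)^n$ as an eigenvalue of an integer matrix on $\pic(X)\cong H^2(X,\Z)$, and then pass to $\lambda_1(f)$ via integral closure. The paper's proof is simply a terser version of the same argument; your added remarks (that injectivity plus finite covering forces $T_f$ to be a homeomorphism, and the explicit appeal to integral closure of $\overline{\Z}$) are helpful details that the paper leaves implicit.
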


\begin{proof}
The hypothesis implies that $f^n$ is birationally conjugate to an algebraically stable map $f_X^n: X \to X$.  Hence
$\lambda_1(f^n) = \lambda_1(f_X^n)=\lambda_1(f)^n$ is the largest eigenvalue of the linear operator $(f_X^n)^*$ acting on $\pic(X) \cong H^2(X,\Z)$, thus an algebraic integer. As a consequence, $\lambda_1(f)$ is also an algebraic integer.
\end{proof}

\subsection{Proof of Theorem {\toriccor}}

From here we rely heavily on the notation and discussion in the appendix on toric surfaces.  Throughout this section $X$ will denote a smooth compact toric surface with fan $\fan$; $f_X:X\to X$ will denote the rational map obtained by conjugating $f:\cp^2\to\cp^2$ by the canonical birational map $\psi:X \to \cp^2$ described in the appendix; and $\eta_X = \psi^*\eta$ will be the meromorphic extension of $\frac{dx\wedge dy}{xy}$ to $X$.
Note that $\psi$ is not necessarily a modification of $\cp^2$ here.

\begin{lem} Let $\tau,\tau'$ be rays $\sigma,\sigma'$ be two dimensional cones in $\Delta$.  Let $C_X, C'_X$ and $p_X,p_X'$ be the associated irreducible components and double points for $\div\eta_X$.
\begin{enumerate}
\item $T_f(\tau) = \tau'$ if and only if $f_X(C_X) = C'_X$.
\item $T_f(\tau) \subset \sigma$ if and only if $f_X(C_X) = p_X$.
\item $T_f(\sigma) \subset \sigma'$ if and only if $T_f(p_X) \cap \supp\div\eta_X = p'_X$.
\item $\tau\subset T_f(\sigma)$ if and only if $C_X \in T_f(p_X)$.  In particular, $p_X\in I(f_X)$.
\end{enumerate}
\end{lem}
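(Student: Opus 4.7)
The plan is to leverage the local asymptotic computation that underlies the proof of Theorem~\ref{cover}: for a primitive $u \in \Z^2$ generating a ray $\tau$ and for generic $w \in (\C^*)^2$, the curve $z \mapsto w\gamma^u(z)$ accumulates on $C_X$ as $z \to 0$, and in any toric chart of the target cut out by a $2$-dimensional cone $\sigma'$ containing $T_f(u)$ one has
\[
f_X(w\gamma^u(z)) \sim \bigl(c_1(w)\,z^{T_f(u)_1},\ c_2(w)\,z^{T_f(u)_2}\bigr),
\]
with the exponents taken in the basis adapted to $\sigma'$. All four statements will follow by reading off the image of this family of curves and combining it with the continuity and integrality of $T_f$ and with the toric dictionary between rays/cones and components/fixed points of $\operatorname{supp}\div\eta_X$.

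For (1) and (2), when $T_f(u)$ lies on a ray $\tau'$ exactly one exponent vanishes and the limit family sweeps out $C'_X$, so $f_X(C_X) = C'_X$; when $T_f(u)$ lies in the relative interior of a $2$-dimensional cone $\sigma$ both exponents are strictly positive and the limit is the associated fixed point $p_X$, so $f_X$ contracts $C_X$ to $p_X$. The converses follow because, by Theorem~\ref{cover}, $\action C$ is always a simple pole whose associated ray in $\fan$ (if there is one) must be the one through $T_f(u)$, and by Proposition~\ref{miscmapprop} any curve contracted by $f_X$ must go to a multiple point of $\eta_X$, i.e.\ to some $p_X$.

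For (3) and (4) I would parametrize approaches to $p_X$ by rational rays $\rho$ in $\sigma$, noting that each such $\rho$ gives rise via $\gamma^{u_\rho}$ to a path accumulating on $p_X$ whose $f_X$-image is governed by the same asymptotic formula. If $T_f(\sigma) \subset \sigma'$, every such approach has image accumulating either at $p'_X$ or along a toric component already accounted for by (1) applied to a boundary ray of $\sigma$; hence, invoking Proposition~\ref{indeterminacy} to restrict the shape of $f_X(p_X)$, $f_X(p_X)$ meets $\supp\div\eta_X$ only at $p'_X$. The converse is the contrapositive: if some ray of $\sigma$ were sent outside $\sigma'$, a nearby rational ray together with (1)--(2) would exhibit an additional toric point in $f_X(p_X)$. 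For (4), if $\tau \subset T_f(\sigma)$ then piecewise linearity and continuity force the existence of an open family of interior rays of $\sigma$ whose $T_f$-images cover a neighborhood of $\tau$ in $\tau$'s direction; the corresponding family of source curves accumulating on $p_X$ then maps by $f_X$ onto $C_X$, forcing $C_X \subset f_X(p_X)$ and thus $p_X \in I(f_X)$. Conversely, if $C_X \subset f_X(p_X)$, then by Remark~\ref{beyondelab} and the density of rational directions of approach to $p_X$ inside $\sigma$, the asymptotic formula produces a ray $\rho \subset \sigma$ with $T_f(\rho) \subset \tau$.

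The main technical obstacle is bookkeeping around indeterminate $p_X$: one must ensure that the irreducible components of $f_X(p_X)$ accounted for by the asymptotic formula along rational rays are exactly the toric ones (exceptional non-toric components being harmless for the statement), and that the inclusions ``$\subset$'' in (2)--(4) are interpreted as lying in the appropriate relative interior so that the four cases partition the possibilities. Both issues are handled by Proposition~\ref{indeterminacy} together with the fact, exploited throughout this section, that $T_f$ is piecewise linear with rational walls matching the rays of $\fan$.
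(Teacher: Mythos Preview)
Your approach is correct and is exactly the paper's: both rest on the landing-map asymptotic $f_X(w\gamma^u(z)) \sim h(w)\gamma^{T_f(u)}(z)$ from Theorem~\ref{cover}, and the paper's proof is in fact far terser than yours---it says only that the four items are ``more or less immediate, given the landing map correspondence between polar primes and rational rays'' and writes out (1) as the sole illustration. One small point: your invocation of Proposition~\ref{indeterminacy} in part (3) is misplaced, since $p_X$ there is a multiple point; the needed fact (that $f_X(p_X)$ has no polar component) follows instead directly from your ray analysis together with (4), because such a component would correspond to a ray of $\Delta$ lying in the interior of the maximal cone $\sigma'$.
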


Note that there is subtle point in item (3) that does not arise when $f$ is a monomial map.  Namely, it can happen that $p_X$ is indeterminate for $f_X$ when $T_f(\sigma) \subset \sigma'$.  The lemma says in this case that no irreducible component of $f_X(p_X)$ is a pole of $\eta_X$.  However, if $X$ is chosen to satisfy the conclusion of Corollary \ref{nononpolars}, the fact that $p_X$ is multiple point for $\eta_X$ allows implies that $p_X\notin I(f_X)$ after all.

\begin{proof}
These are all more or less immediate, given the landing map correspondence between polar primes and rational rays.  For instance, the statements in the first conclusion both boil down to the statement that for any $u\in\tau$, we have $\lim_{z\to 0} f_X(w\gamma^u(z)) = \lim_{z\to 0} h(w)\gamma^{T_f(u)}$ for some non-constant holomorphic map $h:(\C^*)^2 \to (\C^*)^2$.
\end{proof}

\begin{cor}
An irreducible component $C_X$ of $\div\eta_X$ destabilizes $f_X$ if the associated ray $\tau\in\Delta$ satisfies $T_f(\tau)\subset \sigma$ for some two dimensional cone $\sigma\in\Delta$ and $T_f^k(\sigma)\supset\tau'$ for some $k>0$ and ray $\tau'\in\Delta$.  If $X$ satisfies the conclusion of Corollary \ref{nononpolars}, then the converse holds.
\end{cor}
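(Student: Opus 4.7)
The plan is to invoke items (1)--(4) of the preceding lemma to translate both directions of the corollary into combinatorial statements about the $T_f$-orbit of $\sigma$. The forward direction needs no additional hypothesis, while the converse uses Corollary~\ref{nononpolars} to rule out premature indeterminacies.

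For the forward direction, suppose $T_f(\tau) \subset \sigma$ and $T_f^k(\sigma) \supset \tau'$. Item~(2) gives $f_X(C_X) = p_\sigma$, so $C_X$ is contracted by $f_X$. Now track $p_\sigma$ under iteration: set $\sigma_0 := \sigma$, and inductively let $\sigma_{i+1}$ be the two-dimensional cone of $\Delta$ containing $T_f(\sigma_i)$, if one exists. As long as each $\sigma_{i+1}$ is defined and $p_{\sigma_i} \notin I(f_X)$, item~(3) identifies $f_X(p_{\sigma_i}) \cap \supp \div \eta_X$ with the single point $p_{\sigma_{i+1}}$, hence $f_X(p_{\sigma_i}) = p_{\sigma_{i+1}}$. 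The hypothesis forces this process to fail at some minimal step $j_* \leq k$: either $p_{\sigma_{j_*-1}}$ is already indeterminate, or $T_f(\sigma_{j_*-1})$ straddles a ray of $\Delta$, and then item~(4) places $p_{\sigma_{j_*-1}} \in I(f_X)$. Either way, $C_X \to p_{\sigma_0} \to \cdots \to p_{\sigma_{j_*-1}} \in I(f_X)$ is a destabilizing orbit.

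For the converse, assume $X$ satisfies the conclusion of Corollary~\ref{nononpolars} and fix a destabilizing orbit $C_X, q_0 := f_X(C_X), \ldots, q_{n-1} \in I(f_X)$. Since $C_X$ is contracted, item~(1) is excluded, and item~(2) yields $T_f(\tau) \subset \sigma$ for some two-dimensional $\sigma \in \Delta$ with $q_0 = p_\sigma$. I will show by induction that for each $0 \leq i \leq n-2$ there is a two-dimensional cone $\sigma_i \in \Delta$ with $T_f(\sigma_{i-1}) \subset \sigma_i$ and $q_i = p_{\sigma_i}$. Since $q_i \notin I(f_X)$, the image $f_X(q_i) = q_{i+1}$ is a single point; Proposition~\ref{simpletosimpleprop} forces $q_{i+1}$ to be multiple, i.e.\ a double point in the toric setting, and item~(3) then identifies $\sigma_{i+1}$ as the two-dimensional cone containing $T_f(\sigma_i)$ and satisfying $q_{i+1} = p_{\sigma_{i+1}}$. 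At the terminal step, $q_{n-1} \in I(f_X)$ cannot be a double point, since Corollary~\ref{nononpolars} makes multiple points of $\eta_X$ non-indeterminate (using item~(3) to observe that any polar component of $f_X(p_X)$ would be a single point, so the remaining components of the image would have to be non-polar, contradicting the Corollary). Hence $T_f(\sigma_{n-2})$ is not contained in any two-dimensional cone of $\Delta$, and item~(4) provides a ray $\tau' \in \Delta$ with $\tau' \subset T_f(\sigma_{n-2})$. This $\tau'$ and the iterated cone chain $\sigma_0, \sigma_1, \ldots, \sigma_{n-2}$ witness the corollary's condition with $k = n-1$.

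The step requiring the most care is the inductive propagation in the converse: one must know that each intermediate $q_i$ is not only non-indeterminate but also lands on $\supp \div \eta_X$ as a toric double point, which is what lets item~(3) pin down $\sigma_{i+1}$. Proposition~\ref{simpletosimpleprop} supplies multiplicity along the orbit, and Corollary~\ref{nononpolars} together with the ``polar part is a single point'' half of item~(3) forbids any multiple point from being indeterminate. These two ingredients enforce the dichotomy ``item~(3) applies'' versus ``item~(4) applies'' at each step, matching exactly the dichotomy ``the orbit continues to $p_{\sigma_{i+1}}$'' versus ``the orbit reaches $I(f_X)$''.
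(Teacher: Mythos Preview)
Your forward direction is fine, but the converse contains a genuine error. You assert that ``Corollary~\ref{nononpolars} makes multiple points of $\eta_X$ non-indeterminate,'' and use this to conclude that $q_{n-1}$ is not a double point. That is false. Corollary~\ref{nononpolars} only says that $f_X(p_X)$ contains no \emph{non-polar} curves when $p_X$ is multiple; combined with item~(3), this shows $p_\sigma\notin I(f_X)$ \emph{provided} $T_f(\sigma)\subset\sigma'$ for some two dimensional $\sigma'\in\Delta$ (this is exactly the remark the paper makes after the lemma). But if $T_f(\sigma)$ straddles a ray $\tau'\in\Delta$, then item~(4) gives $C_{\tau'}\subset f_X(p_\sigma)$, so $p_\sigma\in I(f_X)$ with a \emph{polar} image curve---perfectly consistent with Corollary~\ref{nononpolars}. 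In fact $q_{n-1}$ \emph{is} a double point: since $q_{n-2}=p_{\sigma_{n-2}}\notin I(f_X)$, Proposition~\ref{simpletosimpleprop} forces $q_{n-1}=f_X(q_{n-2})$ to be multiple, and item~(3) then supplies $\sigma_{n-1}$ with $T_f(\sigma_{n-2})\subset\sigma_{n-1}$ and $q_{n-1}=p_{\sigma_{n-1}}$. Your induction runs one step further than you claim, and the correct conclusion is that $T_f(\sigma_{n-1})\supset\tau'$ (not $T_f(\sigma_{n-2})$).

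There is a second, more structural gap. Even with the index corrected, you have the chain $T_f(\tau)\subset\sigma_0$, $T_f(\sigma_i)\subset\sigma_{i+1}$ for $i\le n-2$, and $T_f(\sigma_{n-1})\supset\tau'$. You then declare that this ``witnesses the corollary's condition with $k=n-1$,'' i.e.\ that $T_f^{k}(\sigma_0)\supset\tau'$. But the chain only yields $T_f^{n}(\sigma_0)\subset T_f(\sigma_{n-1})$, and $\tau'\subset T_f(\sigma_{n-1})$ does not imply $\tau'\subset T_f^{n}(\sigma_0)$: the latter is merely a subcone of the former. So the literal containment $T_f^k(\sigma_0)\supset\tau'$ is not established for any $k$. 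What your (corrected) argument does prove is the chain condition itself, which is all that the subsequent proof of Theorem~{\toriccor} actually uses; the corollary as stated is somewhat informal on this point, and the paper offers no proof of it.
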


Now we proceed to the proof of Theorem {\toriccor}.

\subsection{Case (1): $\rho(T_f) \notin \Q$}

Let $Y$ be any (smooth) rational surface and $\psi:Y\to \cp^2$ a birational map (not necessarily a modification).  Then the form $\eta_Y := \psi^*\eta$ must have at least one (irreducible) pole $C_Y\subset Y$, and $C_Y$ incarnates a prime $C\in\poles$ (see Remark \ref{beyondelab}).  Since $\rho(T_f)$ is rational, the $\action$-orbit of $C$ is infinite, and since only finitely many polar primes appear in $Y$, it follows that some iterate $f^k_Y$ contracts $C_Y$ to a point $p_Y\in Y$.  Let $\tau$ in $\R^2$ be the rational ray corresponding to $\action^k C_Y$.  Then there is an open cone $\sigma$ containing $C_Y$ such that all rational rays in $\sigma$ correspond to polar primes that do not appear in $Y$.  By Lemma \ref{denjoy}, there exists $\ell>k$ such that $\action^\ell(\sigma)\subset\tau$.  That is, $C_Y \subset f^\ell(p_Y)$ and in particular $p_Y\in I(f_Y)$.  Hence $f_Y$ fails the geometric criterion for algebraically stability given in Proposition \ref{geometric criterion}.  Since $\rho(T_{
f^n}) = \rho(T_f^n)\notin \Q$ for any $n>0$, the same argument shows that no iterate of $f_Y$ is algebraically stable.

\subsection{Case (2): $\rho(T_f) \in\Q$}

By Theorem~{\corrig} and Theorem~\ref{cover}, it is sufficient to show the result for toric surfaces.  Replacing $f$ by $f^n$, we may assume that $T_f$ is orientation preserving and $\rho(T_f) = 1$.

Hence $T_f$ has periodic rays and all of them are in fact fixed.  The set $\fix(T_f)$ of rays fixed by $T_f$ has finitely many connected components $K\subset S^1$.  A point component $K$ is given by a ray in a one-dimensional eigenspace for some linear `piece' of $T_f$.  Non-point (i.e. arc) components $K\subset S^1$ consist of all rays in a closed two dimensional cone on which $T_f:\R^2\to\R^2$ is a a multiple of the identity operator.  Either way, $K$ is:
\begin{itemize}
\item[1.] \emph{attracting} if there is an open arc $U\supset K$ such that $K = \cap_{n\geq 0} T_f^n(U)$.
\item[2.] \emph{repelling} if there is an open arc $U\supset K$ such that $K= \cap_{n\geq 0} T_f^{-n}(U)$.
\item[3.] \emph{semi-stable}: if there is an open arc $U\supset K$ such that one half of $U-K$ is attracted by $K$ and the other is repelled.
\end{itemize}

\begin{lem}
Isolated semi-stable fixed points and endpoints of fixed arcs are rational.
\end{lem}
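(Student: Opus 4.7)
The plan is to use the piecewise-linear structure of $T_f$ provided by Theorem~\ref{cover}: $T_f$ is linear on each maximal cone of a finite fan in $\R^2$ whose one-dimensional cones (the \emph{break rays}) are all rational. Isolated semi-stable fixed rays and endpoints of fixed arcs that happen to be break rays are automatically rational, so the content of the lemma is to rule out irrational instances lying in the relative interior of a two-dimensional cone of this fan.

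Fix such a two-dimensional cone $\sigma$ and set $L := T_f|_\sigma$, an invertible linear map given in the standard basis by an integer matrix. A fixed ray of $T_f$ interior to $\sigma$ is precisely an eigenray of $L$ with positive real eigenvalue, and I will classify the local dynamics by the usual Jordan-form case analysis. If $L$ has two distinct positive real eigenvalues, an eigenbasis computation shows that the dominant and subordinate eigenrays respectively attract and repel their neighbors from both sides, so neither is semi-stable. If $L$ has complex eigenvalues, $\interior\sigma$ contains no fixed ray at all. If $L = \lambda I$ is a positive scalar multiple of the identity, every ray of $\sigma$ is fixed and belongs to a common fixed arc that extends until it reaches a break ray on $\bdry \sigma$, so endpoints of this arc lie on break rays. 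The remaining case is that $L$ is non-scalar with a repeated positive real eigenvalue $\lambda$, so $L$ is conjugate to a Jordan block. Here a short computation in projective coordinates shows that the unique fixed eigenray of $L$ is indeed semi-stable, but since the characteristic polynomial of $L$ has a double root one has $\lambda = \tr(L)/2 \in \Q$; the eigenray, as the kernel over $\Q$ of $L - \lambda I$, is therefore rational.

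Combining the four sub-cases completes the proof: an endpoint of a maximal fixed arc must lie on a break ray (in the scalar sub-case the arc continues across the entire interior of $\sigma$ and can only terminate on $\bdry\sigma$), and an isolated semi-stable fixed ray is either a break ray or the eigenray of a Jordan block interior to some $\sigma$, which is again rational. The main technical point I expect to require care is the Jordan-block sub-case, which shows that semi-stable fixed rays can actually occur in the interior of a linear piece; the lemma therefore cannot be argued by simply asserting that every interior fixed ray is attracting or repelling from both sides. It is the integrality of $L$ that rescues the situation, by forcing the repeated eigenvalue, and hence its eigenray, to be rational.
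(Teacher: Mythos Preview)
Your proof is correct and takes the same approach as the paper: a semi-stable interior fixed ray forces the linear piece $L$ to have a non-trivial Jordan block, whence the repeated eigenvalue $\tr(L)/2$ and its eigenray are rational, while endpoints of fixed arcs must lie on break rays. Your Jordan-form enumeration omits the case of two real eigenvalues of opposite sign, but there the induced ray map near the positive eigenray is locally orientation-reversing (multiplier $\lambda_2/\lambda_1<0$) and hence not semi-stable, so the conclusion is unaffected.
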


\begin{proof}
Suppose first that $\tilde u$ is an isolated fixed point, equal to the projection of an eigenvector $u$ for the linear operator $T:\R^2\to\R^2$ defining $T_f$ at $u$.
Semistability implies that either $T_f$ fails to be linear at $u$ (i.e. $T$ defines $T_f$ only to one side of $u$), or the Jordan form of $T$ about $u$ is nontrivial, equal to $\left(\begin{matrix} \lambda & 1 \\ 0 & \lambda\end{matrix}\right)$. In either case, integrality of $T_f$ implies that $u$ has rational slope.

Similarly, the end-points of any fixed arc correspond to rays in $\R^2$ along which $T_f$ fails to be linear.  So once again the endpoints are rational.
\end{proof}

Now let $X\to \cp^2$ be any toric modification (i.e. strong elaboration of $\frac{dx\wedge dy}{xy}$).  From here the procedure is quite similar to the one used in \cite{Fa2,JW} to stabilize monomial maps.  By adding rays and subdividing cones in the fan $\Delta$ for $X$, we may assume the following.
\begin{itemize}
 \item $X$ is smooth, i.e. all two dimensional cones $\sigma\in\Delta$ are regular.
 \item $\Delta$ includes all rational rays that are not interior points of $\fix(T_f)$ (in particular all isolated semi-stable fixed points, and the endpoints of any arc in $\fix(T_f)$).
 \item The conclusion of Corollary \ref{nononpolars} applies; i.e. the image $f_X(p_X)$ of a multiple point $p_X$ contains no non-polar curves.
 \item A two-dimensional cone $\sigma\in\Delta$ contains at most one isolated attracting ray in $\fix(T_f)$, and if this ray is present, $T_f$ contracts $\sigma$, i.e. $T_f(\sigma)\subset \sigma$.
 \item Similarly, $T_f$ expands any two dimensional cone $\sigma\in\Delta$ that contains an isolated repelling ray.
 \end{itemize}
All these conditions persist under further strong elaboration of $\eta_X$.

Note also that for each ray $\tau\in\Delta$ not fixed by $T_f$, the forward orbit $T_f^n$ is `strictly monotone' (e.g. successive images always move clockwise around $\origin$) and eventually falls into a two dimensional $\sigma\in\Delta$ satisfying $T_f(\sigma) \subset \sigma$.  We refine $\Delta$ by adding the finitely many images $T_f^k(\tau)$ \emph{not} contained in $\sigma$ and then subdividing two dimensional cones appropriately.  After we do this for each $\tau\in\Delta$, we arrive at a fan $\Delta'$ such that
\begin{itemize}
 \item for any two dimensional $\sigma\in\Delta'$, either $T_f$ contracts $\sigma$ or $T_f^{-1}(\sigma)\subset \sigma'$ for some other two dimensional $\sigma\in\Delta'$.
 \item for any ray $\tau\in\Delta'$, either $T_f$ fixes $\tau$, or there exists $k\in\N$ such that $T_f^j(\tau) \in \Delta'$ for $j\leq k$ and $T_f^j(\tau) \in \sigma$ for all $j>k$ for some two dimensional $\sigma\in\Delta'$ such that $T_f(\sigma)\subset \sigma$.
\end{itemize}
The second property means that if $X'$ is the toric surface associated to $\Delta'$ and $C_{X'}$ is an irreducible curve in the complement of $(\C^*)^2$, either $f_{X'}(C_{X'}) = C_{X'}$, or there exists some $k\in\N$ such that $T_f^j$ contracts $C_{X'}$ if and only if $j>k$.  In particular, $f_{X'}$ is stable along $\eta_{X'}$.

The problem is that $X'$ is not (necessarily) smooth.  We fix this inductively. The  two dimensional cones of
$\Delta'$ that are expanded or contracted by $T_f$ are also cones of $\Delta$.  Since the surface $X$ associated to $\Delta$ is smooth, these cones are regular.  So if $\sigma\in\Delta'$ is irregular, we have for each $j>0$ that $T_f^{-k}(\sigma)$ is contained in a cone of $\Delta'$ different from $\sigma$.  We may suppose inductively that these (finitely many) other cones are all regular.  Thus we can regularize $\sigma$ by adding to $\Delta'$ finitely many rays in $\sigma$ and then subdividing $\sigma$.  Then we add forward images of these rays and subdivide the forward images of $\sigma$ as in the previous paragraph in order to maintain stability along $\eta$.  As we do this, we may also include more rays in order to ensure that each forward image $T^j(\sigma)$ is either a union of regular cones or falls into a contracted cone.  This does not change the cones that are expanded or contracted by $T_f$.  Nor does it affect regularity of the cones containing preimages of $\sigma$.
Rather, it reduces the number of irregular cones by at least one.  So in finitely many steps, $\Delta'$ becomes the fan for a smooth toric surface $X'$, such that $f_{X'}$ is stable along $\eta_{X'}$.  This proves that $f$ is corrigible along $\eta$.
\qed

\section*{Appendix: toric surfaces.}

Here we assemble needed facts about toric surfaces.  Our presentation is brief and partial, but the interested reader can consult \cite{CLS,Fu,Oda} for more detail.

A toric surface is a complex surface $X$ together with an embedding $(\C^*)^2\hookrightarrow X$ such that the natural action of the algebraic torus $(\C^*)^2$ on itself extends holomorphically to all of $X$.  The complement of $(\C^*)^2$ in $X$ is a finite union of smooth rational curves $C_1, \dots , C_p$.  These correspond to rational rays $\tau_j\subset \R^2$ as follows.

For any pair $u = (a,b) \in\Z^2\setminus\{\origin\}$, let $\gamma^u:\C^*\to (\C^*)^2$ denote the one parameter subgroup
$\gamma^u(z) = (z^a, z^b)$ .  Then for each $j=1,\dots,p$ there exists a unique relatively prime pair $u_j=(a_j,b_j)$ such that $\lim_{z\to 0} \gamma^{u_j}(z) \in C_j$.  Indeed for any integer $k>0$, the `landing' map $w \mapsto \lim_{\zeta\to 0} w\gamma^{ku_j}(\zeta)$ is a holomorphic map of $(\C^*)^2$ onto $C_j \setminus \bigcup_{j\neq k} C_k$.   For each $j$, let $\tau_j := \R_+u_j\subset\R^2$ denote the rational ray generated by $u_j$.  We choose indices so that the $\tau_j$ are ordered counterclockwise about $\origin$ and adopt the `mod $p$' convention $\tau_0 := \tau_p$.  

The surface $X$ is compact if $\lim_{z\to 0} \gamma^u(z)$ exists \emph{for all} pairs $u\in\Z^2$.  More precisely, each pair $\tau_j,\tau_{j+1}$ bounds a distinct closed and strictly convex cone $\sigma_j\subset\R^2$ and these cones cover $\R^2$.  A pair $u\in\Z^2$ lies in the interior of $\sigma_j$ if and only if $\lim_{z\to 0} \gamma^u(z)$ is the unique intersection between $C_j$ and $C_{j+1}$.
That $X$ is smooth means that each cone $\sigma_j$ is \emph{regular}, i.e. that $u_j,u_{j+1}$ give an (integral) basis for $\Z^2$.  The \emph{fan} of $X$ is the collection
$$
\Delta := \{\origin,\tau_1,\dots,\tau_p,\sigma_1,\dots,\sigma_p\}
$$
of $0$, $1$, and $2$ dimensional cones associated to $X$.

The blowup of $X$ at $C_j\cap C_{j+1}$ is also a toric surface whose fan is obtained by adding the ray generated by $u_j + u_{j+1}$ to the rays $\tau_j\in\Delta$ (and replacing the two dimensional cone $\sigma_j$ by the corresponding pair of subcones).  More generally, any smooth toric surface $Y$ whose fan contains all rays $\tau_j\in\Delta$ is a modification of $X$ that decomposes into a finite sequence of such blowups.  Every rational ray in $\R^2$ can be realized as an element in the fan of \emph{some} toric modification of $X$.  Hence any two toric surfaces $X,Y$ have a common toric modification $\Gamma\to X,Y$ with fan obtained by first joining the fans of $X$ and $Y$ and then adding more rays to ensure smoothness of $\Gamma$.  This common modification gives us a birational map $X\to Y$ which is canonical in
the sense that it respects the given embeddings of $(\C^*)^2$ into $X$ and $Y$.

Finally, we note that the form $\eta = \frac{dx\wedge dy}{xy}$, which is holomorphic on $(\C^*)^2$, extends to a meromorphic two-form $\eta_X$ on $X$ with $\div\eta_X = - \sum C_j$.  Hence $-\div\eta$ is reduced and effective with dual graph equal to a cycle.

\bibliographystyle{mjo}

\bibliography{refs}

\end{document}